\newcommand{\be}{\begin{equation}}
\newcommand{\ee}{\end{equation}}
\newcommand{\bea}{\begin{eqnarray}}
\newcommand{\eea}{\end{eqnarray}}
\newcommand{\beas}{\begin{eqnarray*}}
\newcommand{\eeas}{\end{eqnarray*}}
\newtheorem{theorem}{Theorem}[section]
\newtheorem{definition}[theorem]{Definition}
\newtheorem{proposition}[theorem]{Proposition}
\newtheorem{corollary}[theorem]{Corollary}
\newtheorem{lemma}[theorem]{Lemma}
\newtheorem{remark}[theorem]{Remark}
\newtheorem{example}[theorem]{Example}
\newtheorem{examples}[theorem]{Examples}
\newtheorem{foo}[theorem]{Remarks}
\newenvironment{Remark}{\begin{remark}\rm}{\end{remark}}
\newenvironment{proof}{\addvspace{\medskipamount}\par\noindent{\it Proof}.}
{\unskip\nobreak\hfill$\Box$\par\addvspace{\medskipamount}}
\newcommand{\ang}[1]{\left<#1\right>}
\newcommand{\brak}[1]{\left(#1\right)}    
\newcommand{\crl}[1]{\left\{#1\right\}}   
\newcommand{\edg}[1]{\left[#1\right]}     
\newcommand{\p}{\mathbb{P}}
\newcommand{\E}[1]{{\mathbb{E}}\left[#1\right]}
\newcommand{\Emustetig}[1]{\mathbb{E}^{\mathbb{\mu}}_{\F_t}\left[#1\right]}
\newcommand{\N}[1]{\left\|#1 \right\|}     
\newcommand{\abs}[1]{\left|#1\right|}     
\DeclareMathOperator{\esssup}{ess\,sup}
\def\F{\mathcal{F}}
\begin{document}
\title{Existence, minimality and approximation of\\
solutions to BSDEs with convex drivers}

\author{Patrick Cheridito\thanks{Supported by NSF Grant DMS-0642361}\\
Princeton University\\ Princeton, NJ, USA
\and Mitja Stadje\footnotesize{*}\\
University of Tilburg\\Tilburg, The Netherlands}
\date{May 6, 2011}

\maketitle

\begin{abstract}
We study the existence of solutions to backward stochastic differential equations with drivers
$f(t,W,y,z)$ that are convex in $z$. We assume $f$ to be Lipschitz in $y$ and $W$ but do not make
growth assumptions with respect to $z$. We first show the existence of
a unique solution $(Y,Z)$ with bounded $Z$ if the terminal condition is
Lipschitz in $W$ and that it can be approximated by the solutions
to properly discretized equations. If the terminal condition
is bounded and uniformly continuous in $W$ we show the existence of a
minimal continuous supersolution by uniformly approximating the terminal condition
with Lipschitz terminal conditions. Finally, we prove existence of
a minimal RCLL supersolution for bounded lower semicontinuous
terminal conditions by approximating the terminal condition pointwise
from below with Lipschitz terminal conditions.\\[2mm]
{\bf Keywords} Backward stochastic differential equations, backward stochastic
difference equations, convex drivers, discrete-time approximations, supersolutions.\\[2mm]
{\bf MSC 2010} 60H10, 65C30
\end{abstract}

\setcounter{equation}{0}
\section{Introduction}

We consider BSDEs (backward stochastic differential equations) of the form
\be \label{bsde}
Y_t = \xi + \int_t^T f(s,W,Y_s,Z_s)ds - \int_t^T Z_s dW_s, \quad 0 \le t \le T,
\ee
with drivers $f$ that are convex in $Z_s$. We assume
$f$ to be Lipschitz-continuous in $W$ and $Y_s$ but only
locally Lipschitz-continuous in $Z_s$. In particular, $f$ can
grow arbitrarily fast in $Z_s$. $(W_t)_{t \in [0,T]}$ is
a $d$-dimensional Brownian motion on a probability space
$(\Omega, {\cal F}, \p)$ and $Z_s dW_s$ is understood as
$\sum_{k=1}^d Z^k_s dW^k_s$. The terminal condition $\xi$ is an
${\cal F}_T$-measurable random variable, where $({\cal F}_t)_{t
\in [0,T]}$ is the augmented filtration generated by $(W_t)_{t \in [0,T]}$.

BSDEs with drivers linear in $(y,z)$ were introduced by Bismut (1973).
Pardoux and Peng (1990) showed that BSDEs with drivers that are Lipschitz
in $(y,z)$ have a unique solution if the terminal condition is square-integrable.
Kobylanski (2000) proved existence and uniqueness of solutions to BSDEs with bounded
terminal conditions and drivers that grow at most
quadratically in $z$. Extensions to unbounded terminal conditions have been
provided by Briand and Hu (2006, 2009) as well as Delbaen et al. (2011).
BSDEs with drivers that are convex and of unrestricted growth in $z$ have
already been studied in Delbaen et al. (2009). In that paper, the Brownian motion
is one-dimensional, the terminal condition is bounded and the driver is of the form
$f(z)$ for a deterministic convex function $f : \mathbb{R} \to \mathbb{R}$ satisfying $f(0) = 0$ and
$\lim_{z \to \pm \infty} f(z)/|z|^2 = \infty$. It is shown in Delbaen et al. (2009) that, depending on
the terminal condition, BSDEs of this form have either no or infinitely many bounded
solutions. Moreover, it is proved that a bounded solution exists if
the terminal condition is of the form $\varphi(X_T)$, where $\varphi : \mathbb{R} \to \mathbb{R}$
is a deterministic bounded continuous function and $X$ a forward process driven by the underlying
Brownian motion. In this special case, BSDEs can be formulated as parabolic PDEs.
Related PDE results have been obtained by Ben-Artzi et al. (2002) and Gilding et al. (2003).

The purpose of this paper is to show the existence and uniqueness of a solution if the driver $f$
depends on $(t,W,y,z)$ and the terminal condition $\xi$ is a possibly unbounded
function of the whole underlying Brownian motion $W_t$, $0 \le t \le T$. However, in view of the
results of Delbaen et al. (2009) it cannot be hoped that solutions exist for
arbitrary terminal conditions or that uniqueness holds without restrictions on the $Z$-process.
Therefore, we first study terminal conditions that are Lipschitz in the underlying
Brownian motion and then approximate more general terminal contitions with Lipschitz ones.
In Theorem \ref{thm1} we show that \eqref{bsde} has a unique solution $(Y,Z)$
with bounded $Z$ if the terminal condition is of the form $\varphi(W)$,
where $\varphi$ is a Lipschitz-continuous function on the space of continuous
functions. Our method of proof is to approximate \eqref{bsde} by discrete-time
equations and show that their solutions converge to a solution of the
continuous-time BSDE. In Theorem \ref{thm2} we prove that for bounded terminal conditions
that can uniformly be approximated by Liptschitz terminal conditions the BSDE
\eqref{bsde} has a bounded continuous supersolution in the sense of Peng (1999) such that
$Z$ is a BMO process. This covers the case of bounded terminal conditions that are uniformly
continuous in the underlying Brownian motion. Theorem \ref{thm3} treats bounded
terminal conditions that are pointwise limits of an increasing sequence of
Lipschitz terminal conditions. In this case we show that the BSDE \eqref{bsde} has a bounded
RCLL supersolution such that $Z$ is BMO. This gives the existence of a RCLL supersolution for
bounded terminal conditions that are lower semicontinuous in the underlying
Brownian motion. If the driver is monotone in $y$, we are also able to show that
the BSDE \eqref{bsde} satisfies a one-sided comparison principle, from which
we deduce that the supersolutions constructed in Theorems \ref{thm2} and \ref{thm3}
are minimal.

The structure of the paper is as follows: In Section 2 we introduce
the notation and state our main results. In Section 3 we prove results on BS$\Delta$Es
(backward stochastic difference equations) that are needed in the proof of Theorem \ref{thm1}
given in Section 4. In Section 5 we use convex duality to show
comparison results. In Section 6 we give the proofs of Theorem \ref{thm2} and \ref{thm3}.
In the Appendix we show that a convergence result of Briand et al. (2002) which
we need in the proof of Theorem 2.3 still holds in our setting.

\setcounter{equation}{0}
\section{Notation and statement of results}

Let $(\Omega, {\cal F}, \p)$ be a probability space carrying a $d$-dimensional
Brownian motion $(W_t)_{0 \le t \le T}$. As usual, we identify random variables
that agree almost surely and understand equalities
as well as inequalities between them in the almost sure sense. Fix $T \in (0,\infty)$
and denote by $C^d[0,T]$ the space of
all continuous functions $w : [0,T] \to \mathbb{R}^d$. Let $({\cal E}_t)$ be the
filtration on $C^d[0,T]$ generated by the coordinate process and ${\cal P}$ the
predictable sigma-algebra on $[0,T] \times C^d[0,T]$. We call a function
$$
f : [0,T] \times C^d[0,T] \times \mathbb{R} \times \mathbb{R}^d \to \mathbb{R}
$$
a driver if it is $\mathcal{P} \otimes \mathcal{B}(\mathbb{R}) \otimes \mathcal{B}(\mathbb{R}^{d})$-measurable.
We always assume $f$ in equation \eqref{bsde} to be a driver and the terminal condition $\xi$
an ${\cal F}_T$-measurable random variable.
We call a stochastic process RCLL if almost all of its paths are right-continuous and have left
limits. We call a stochastic process $(A_t)$ increasing if
$A_s \le A_t$ for $s \le t$. Similarly, we say a function $f : \mathbb{R} \to \mathbb{R}$ is
increasing (decreasing) if $f(x) \le (\ge) f(y)$ for $x \le y$.

\begin{definition}
\label{super} A solution of the BSDE \eqref{bsde} consists of a
pair $(Y_t, Z_t)_{0 \le t \le T}$ of predictable processes with
values in $\mathbb{R} \times \mathbb{R}^d$ such that \be
\label{integrable} \int_0^T |f(s,W,Y_s,W_s)| ds < \infty, \quad
\int_0^T |Z_s|^2ds < \infty \ee and
$$
Y_t = \xi + \int_t^T f(s,W,Y_s,Z_s)ds -\int_t^T Z_s dW_s \quad \mbox{for all } t \in [0,T].
$$
A supersolution of the BSDE \eqref{bsde} consists of a triple
$(Y_t, Z_t, A_t)_{0 \le t \le T}$ of predictable processes with values in
$\mathbb{R} \times \mathbb{R}^d \times \mathbb{R}$ such that
$(Y_t)$ is RCLL, $(A_t)$ starts at $0$ and is increasing RCLL, \eqref{integrable} holds and
$$ Y_t = \xi + \int_t^T f(s,W,Y_s,Z_s)ds -\int_t^T
Z_s dW_s + A_T - A_t \quad \mbox{for all } t \in [0,T].
$$
\end{definition}

\begin{definition}
\label{comparison} We say a supersolution $(Y_t,Z_t,A_t)$ of
the BSDE \eqref{bsde} satisfies bounded comparison from above
if for every supersolution $(Y'_t,Z'_t,A'_t)$ of \eqref{bsde}
with driver $f' \ge f$ and terminal condition $\xi' \ge \xi$
such that $Y'$ is bounded, one has $Y'_t \ge Y_t$ for all $t$.
\end{definition}

\begin{Remark}
If $(Y_t,Z_t,A_t)$ is a supersolution of the BSDE \eqref{bsde}
such that $Y$ is bounded and satisfies bounded comparison from
above, one has $Y'_t \ge Y_t$, $0 \le t \le T$, for every other
supersolution $(Y'_t,Z'_t,A'_t)$ of \eqref{bsde} such that $Y'$
is bounded. So $(Y_t,Z_t,A_t)$ is the minimal bounded
supersolution of \eqref{bsde}. If in addition, $ A\equiv 0$, $(Y_t,Z_t)$ is
the minimal bounded solution.
\end{Remark}

Denote by $|.|$ the Euclidean norm on $\mathbb{R}^d$. For most of our results we
need the driver to satisfy some or all of the following properties:
\begin{itemize}
\item[(f1)] $f(t,w,y,z)$ is convex in $z$
\item[(f2)] $\sup_{t,w} |f(t,w,0,0) |<\infty$
\item[(f3)] There exists a constant $K \in \mathbb{R}_+$ such that
$$
|f(t,w_{1},y_1,z)-f(t,w_{2},y_2,z)| \le K \big(\sup_{0 \le s \le t}
|w_1(s) -w_2(s)|+|y_1-y_2|\big)
$$
for all $t, w_1, w_2, y_1, y_2, z$.
\item[(f4)]
For every $a \in \mathbb{R}_+$ there exists a $b \in \mathbb{R}_+$ such that
$$
|f(t,w,y,z_1) - f(t,w,y,z_2)| \le b |z_1- z_2|
$$
for all $t,w,y$ and $z_1,z_2 \in\mathbb{R}^d$ with $|z_1| \vee |z_2| \le a$.
\item[(f5)] $\inf_{w \in C^d[0,T],y \in [-c,c],\,\,z\in\mathbb{R}^d,t\in[0,T]}f(t,w,y,z)>-\infty
\quad \mbox{for all } c \in \mathbb{R}_+$,\\[2mm]
\end{itemize}
It can be shown that it follows from (f3) and (f4) that $f$ is
$\mathcal{P} \otimes \mathcal{B}(\mathbb{R}) \otimes \mathcal{B}(\mathbb{R}^{d})$-measurable
and therefore, a driver.

Our first result shows that the BSDE \eqref{bsde} has a unique solution
such that $Z$ is bounded when $f$ satisfies (f1)--(f4) and the terminal
condition is Lipschitz-continuous in the underlying Brownian motion $W$.
We prove it by discretizing equation \eqref{bsde} in time and then passing to
the continuous-time limit. To do that
we approximate $W$ by a sequence $W^N$, $N
\in \mathbb{N}$, of $d$-dimensional square-integrable
martingales starting at $0$ with independent increments
satisfying the following conditions:
\begin{itemize}
\item[(W1)]
For every $N \in \mathbb{N}$ there exists a
finite sequence  $0=t^N_0 < t^N_1< t^N_2\cdots < t^N_{i_N}=T$ such that
$$
\lim_{N\rightarrow\infty} \max_i |t^N_{i+1} - t^N_i| = 0
$$
and $W^N_t$ is constant on the intervals $[t^N_i ,t^N_{i+1})$.
\item[(W2)]
$$
\lim_{N \to \infty} \E{\sup_{0\le t\le T} |W^N_t-W_t|^2} = 0.
$$
\item[(W3)]
For all $N$ and $i$, $\Delta W^N_{t^N_i}$ takes only finitely
many different values.
\item[(W4)]
For all $N$, $i$ and $k \neq l$,
$$
\E{\Delta W^{N,k}_{t^N_i} \Delta W^{N,l}_{t^N_i}} = 0 \quad
\mbox{and} \quad \Delta \ang{W^{N,k}}_{t^N_i} = \Delta \ang{W^{N,l}}_{t^N_i}=\Delta t^N_i > 0 .
$$
\item[(W5)]
$$\sup_{N,i,k} \frac{\N{\Delta W^{N,k}_{t^N_i}}_{\infty}}{\sqrt{\Delta t^N_i}}
< \infty.
$$
\end{itemize}
One can, for instance, set $t^N_i = iT/N$, $i =0, \dots , N$
and let the $W^N$ be $d$-dimensional Bernoulli random walks
with increments $\pm \sqrt{T/N}$, that is, the increments
$W^{N,k}_{t^N_i} - W^{N,k}_{t^N_{i-1}}$, $i = 1, \dots, N$, $k
= 1, \dots d$, are independent and have distribution $\p
\edg{W^{N,k}_{t^N_i} - W^{N,k}_{t^N_{i-1}} = \pm \sqrt{T/N}} =
1/2$ (see Cheridito and Stadje (2009) for details on how to
construct $d$-dimensional Bernoulli random walks on the same
probability space as $W$ such that on has the convergence of (W2)).

We set $\ang{W^N}_t := \ang{W^{N,1}}_t = \dots = \ang{W^{N,d}}_t = t^N_i$
for $t^N_i \le t < t^N_{i+1}$. Let $({\cal F}^N_t)$ be the filtration generated by $W^N$.
To define the approximating BS$\Delta$Es, we construct two continuous approximations
to $W^N$. The process
\be \label{continuousbar}
\bar{W}^N_t = W^N_{t^N_{i-1}} + \frac{t - t^N_{i-1}}{t^N_i - t^N_{i-1}}
(W^N_{t^N_i} - W^N_{t^N_{i-1}}) \quad \mbox{for } t^N_{i-1} \le t \le t^N_i
\ee
is continuous but not adapted to $({\cal F}^N_t)$. To make it
$({\cal F}^N_t)$-adapted, we shift it by $h^N := \sup_i |t^N_i - t^N_{i-1}|$ and define
\be \label{continuous}
\hat{W}^N_t = \left\{ \begin{array}{cl}
0 & \mbox{ for } 0 \le t \le h^N\\
\bar{W}^N_{t - h^N} & \mbox{ for } h^N \le t \le T.
\end{array} \right.
\ee

Introduce the left-continuous, piecewise constant process
$\hat{f}^N$ on $C^d[0,T]$ by $\hat{f}^N(0,w,y,z) :=f(0,w,y,z)$ and
\be
\label{deffn}
\hat{f}^N(t,w,y,z):=\frac{\int_{t^N_i}^{t^N_{i+1}}f(s,w,y,z)ds}{\Delta
t^N_{i+1}} \quad \mbox{for }t^N_i<t\leq t^N_{i+1}.
\ee
Since the approximating processes $W^N$ do in general not have the
predictable representation property, solutions to the discretized equations
involve orthogonal martingales. More, precisely, a solution to the
$N$-th BS$\Delta$E (backward stochastic difference equation)
corresponding to an ${\cal F}^N_T$-measurable terminal condition $\xi^N$ consists of a
triple of $({\cal F}^N_t)$-adapted processes
$(Y^N_t,Z^N_t,M^N_t)$ taking values in $\mathbb{R} \times
\mathbb{R}^d \times \mathbb{R}$ such that $(Y^N_t)$ is constant
on the intervals $[t^N_i, t^N_{i+1})$, $(Z^N_t)$ is constant on
the intervals $(t^N_i, t^N_{i+1}]$, $(M^N_t)$ is a martingale
starting at $0$ and orthogonal to $(W^N_t)$ that is constant on
the intervals $[t^N_i, t^N_{i+1})$ and
\be \label{Nbsde}
Y^N_t = \xi^N + \int_{(t,T]} \hat{f}^N(s,\hat{W}^N,Y^N_{s-},
Z^N_s) d \ang{W^N}_s - \int_{(t,T]} Z^N_s d W^N_s - (M^N_T - M^N_t), \quad t \in [0,T].
\ee
Since the process $(W^N_t)$ is piece-wise constant, it is
completely determined by the finite sequence $(W^N_{t^N_1},\dots, W^N_T)$,
and equation \eqref{Nbsde} can be written as
\bea
\label{Nbsde1} Y^N_{t^N_i} &=& Y^N_{t^N_{i+1}} +
f^N(t^N_{i+1},W^N,Y^N_{t^N_{i}}, Z^N_{t^N_{i+1}}) \Delta
t^N_{i+1}
- Z^N_{t^N_{i+1}} \Delta W^N_{t^N_{i+1}} -\Delta M^N_{t^N_{i+1}}\\
\label{Nbsde2}
Y^N_T &=& \xi^N,
\eea
for functions
$$
f^N : \crl{t^N_1, \dots, T} \times \mathbb{R}^{d \times i_N}
\times \mathbb{R} \times \mathbb{R}^d\to\mathbb{R}.
$$
If $f$ satisfies (f1)--(f4), $f^N$ has the following properties:
\begin{itemize}
\item[(f1')] $f^N(t^N_i,w,y,z)$ is convex in $z$
\item[(f2')] $\sup_{N,i} |f^N(t^N_i,W^N,0,0)|<\infty$
\item[(f3')]
There exists a constant $K \in \mathbb{R}_+$ such that
$$
|f^N(t^N_i,w_1,y_1,z) - f^N(t^N_i,w_2,y_2,z)| \le K \big(\sup_{j \le i-1}
|w_1(t^N_j) -w_2(t^N_j)|+|y_1-y_2|\big)
$$
for all $N,i, w_1, w_2, y_1, y_2, z$.
\item[(f4')]
For every $a \in \mathbb{R}_+$ there exists a $b \in \mathbb{R}_+$ such that
$$
|f^N(t^N_i,w,y,z_1) - f^N(t^N_i,w,y,z_2)| \le b |z_1- z_2|
$$
for all $N,i,w,y$ and $z_1,z_2 \in\mathbb{R}^d$ satisfying $|z_1| \vee |z_2| \le a$.
\end{itemize}

We endow $C^d[0,T]$ with the supremum norm $\N{w}_{\infty} := \sup_{0 \le t \le T} |w(t)|$.
Our first result assumes that
the terminal condition is of the form $\xi = \varphi(W)$ for a
Lipschitz-continuous function $\varphi : C^d[0,T] \to \mathbb{R}$,
that is, there exists a constant $L \in \mathbb{R}$ such that
$|\varphi(w_1) - \varphi(w_2)| \le L \N{w_1 - w_2}_{\infty}$ for all
$w_1,w_2 \in C^d[0,T]$.

\begin{theorem} \label{thm1}
Assume $f$ satisfies {\rm (f1)--(f4)} and $\xi$ is of the form
$\xi = \varphi(W)$ for a Lipschitz-continuous function $\varphi :
C^d[0,T] \to \mathbb{R}$. Then the BSDE \eqref{bsde} has a
unique solution $(Y,Z)$ such that $Z$ is bounded. Moreover, if
$\xi^N = \varphi(\hat{W}^N)$, then for $N$ large enough, there
exist unique solutions $(Y^N,Z^N,M^N)$ to the corresponding
BS$\Delta$Es \eqref{Nbsde} and \be \label{conver1} \sup_t
\brak{|Y^N_t-Y_t| + |\int_0^t Z^N_s dW^N_s - \int_0^t Z_s
dW_s|+ |M^N_t|} \to 0 \quad \mbox{in } L^2 \ee as well as \be
\label{conver2} \sup_t \brak{\sum_{k=1}^d \abs{\int_0^t
Z^{N,k}_s d \ang{W^N}_s - \int_0^t Z^k_s ds}^2 + \abs{\int_0^t
|Z^N_s|^2 d \ang{W^N}_s - \int_0^t |Z_s|^2 ds}} \to 0 \quad
\mbox{in } L^1. \ee If $(Y',Z')$ is the solution with bounded
$Z'$ of the BSDE \eqref{bsde} corresponding to a driver $f' \ge
f$ satisfying {\rm (f1)--(f4)} and terminal condition $\xi' \ge
\xi$ of the form $\xi' = \varphi'(W)$ for a
Lipschitz-continuous function $\varphi' : C^d[0,T] \to
\mathbb{R}$, then $Y'_t \ge Y_t$ for all $t \in [0,T]$. In
particular, if $\varphi$ is bounded, then $Y$ is bounded as well.
\end{theorem}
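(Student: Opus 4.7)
The plan is to construct the bounded-$Z$ solution $(Y,Z)$ as the continuous-time limit of solutions to the BS$\Delta$Es \eqref{Nbsde} with terminal condition $\xi^N = \varphi(\hat W^N)$, and then to derive comparison and uniqueness from a linearisation argument. For $N$ large enough that $K\Delta t^N_i<1$ for all $i$, I would solve \eqref{Nbsde1}--\eqref{Nbsde2} by backward recursion. At each step, conditions (W3)--(W4) yield the orthogonal $L^2$-decomposition
$$
Y^N_{t^N_{i+1}}-\Econd{Y^N_{t^N_{i+1}}} = Z^N_{t^N_{i+1}}\cdot\Delta W^N_{t^N_{i+1}} + \Delta M^N_{t^N_{i+1}},
$$
which uniquely determines $(Z^N_{t^N_{i+1}},\Delta M^N_{t^N_{i+1}})$ with $\Delta M^N_{t^N_{i+1}}$ orthogonal to $\Delta W^N_{t^N_{i+1}}$. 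The residual equation for $Y^N_{t^N_i}$ is implicit in $Y^N_{t^N_i}$ but, by (f3), $\hat f^N$ is Lipschitz in $y$ with constant $K$, so Banach's theorem provides a unique $\mathcal F^N_{t^N_i}$-measurable solution.

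The crucial a priori estimate is a uniform-in-$(N,i)$ bound on $|Z^N_{t^N_{i+1}}|$. I would prove by backward induction that $Y^N_{t^N_i}$, viewed as a deterministic function of the increments $(\Delta W^N_{t^N_1},\dots,\Delta W^N_{t^N_i})$, is Lipschitz with a constant $L_i$ satisfying a Gronwall-type bound, starting from the Lipschitz constant $L$ of $\varphi$. The induction uses (f3) for the $(w,y)$-dependence together with (f1) and (f4) to control the local Lipschitz constant of $\hat f^N$ in $z$ in terms of the bound on $Z^N$ already available from the subsequent step. Combined with (W5), the representation
$$
Z^{N,k}_{t^N_{i+1}} = \Econd{Y^N_{t^N_{i+1}}\Delta W^{N,k}_{t^N_{i+1}}}/\Delta t^N_{i+1}
$$
then converts the Lipschitz bound on $Y^N$ into a uniform $L^\infty$-bound on $Z^N$; a parallel induction using (f2) yields a uniform bound on $Y^N$.

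Once $|Z^N|$ is bounded by some constant $a$, (f4) makes the driver effectively globally Lipschitz in $z$ on the relevant range. The convergence theorem of Briand--Delyon--Mémin (2002), adapted to the present discretisation in the Appendix, then produces a predictable pair $(Y,Z)$ with $Z$ bounded for which \eqref{conver1} and \eqref{conver2} hold; together with (W2) and the uniform approximation of $f$ by $\hat f^N$, these convergences are strong enough to pass to the limit in \eqref{Nbsde} and conclude that $(Y,Z)$ solves \eqref{bsde}. For the comparison assertion, given $(Y',Z')$ as in the statement, I would write $\delta Y := Y'-Y$, $\delta Z := Z'-Z$ and linearise
$$
f'(s,W,Y'_s,Z'_s)-f(s,W,Y_s,Z_s) = R_s + \alpha_s\delta Y_s + \beta_s\cdot\delta Z_s,
$$
with $R_s := f'(s,W,Y'_s,Z'_s)-f(s,W,Y'_s,Z'_s)\ge 0$, $|\alpha|\le K$ by (f3), and $|\beta|\le b$ by (f4), using that $Z$ and $Z'$ are both bounded. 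A Girsanov change of measure, well defined because $\beta$ is bounded, removes the $\delta Z$-term and conditional expectations under the new measure yield $\delta Y_t\ge 0$. Taking $f'=f$, $\xi'=\xi$ gives uniqueness in the bounded-$Z$ class, and if $\varphi$ is bounded by $M$, comparing with the ODE-type solutions $Y^\pm_t = \pm M + \int_t^T f(s,W,Y^\pm_s,0)\,ds$, $Z^\pm\equiv 0$ (whose existence is standard from Gronwall using (f2)--(f3)), bounds $Y$.

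The main obstacle is closing the uniform $Z^N$-bound in the second step. Because $f$ has unrestricted growth in $z$, this bound cannot be obtained from a brute-force $L^2$ estimate, and (f4) only supplies a local Lipschitz constant in $z$ that itself depends on the bound one is trying to prove. The induction therefore has to be arranged so that at each backward step the local $z$-Lipschitz constant used to control the new $L_i$ is supplied by the already-established bound on $Z^N_{t^N_{i+1}}$ from the next step, with the resulting recursion closed by a Gronwall argument that is independent of $N$.
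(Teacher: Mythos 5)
Your proposal reproduces the paper's architecture: backward recursion for the BS$\Delta$Es, a uniform $L^\infty$-bound on $Z^N$ deduced from a path-Lipschitz property of $Y^N$ via $Z^{N,k}_{t^N_{i+1}}=\E{Y^N_{t^N_{i+1}}\Delta W^{N,k}_{t^N_{i+1}}\mid \F^N_{t^N_i}}/\Delta t^N_{i+1}$, modification of $f$ outside the resulting ball so as to invoke the Lipschitz theory and the Briand--Delyon--M\'emin convergence theorem, and comparison/uniqueness by linearisation (the paper instead truncates $f$ and $f'$ and quotes El Karoui et al.\ (1997), which amounts to the same thing). However, the step you yourself identify as the main obstacle --- closing the uniform bound on $Z^N$ --- is exactly the step you do not supply, and the induction you sketch would fail. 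If you propagate the path-Lipschitz constant $L_i$ of $Y^N_{t^N_i}$ through the recursion \eqref{Nbsde1} using the local $z$-Lipschitz constant $b$ from (f4), you must compare the $Z^N_{t^N_{i+1}}$'s attached to two conditioning paths $x$ and $x'$; by \eqref{Zformula} this comparison costs a factor of order $\N{\Delta W^{N,k}_{t^N_{i+1}}}_\infty/\Delta t^N_{i+1}\sim 1/\sqrt{\smash[b]{\Delta t^N_{i+1}}}$, so the recursion has the form $L_i\le L_{i+1}\big(1+K\Delta t^N_{i+1}+c\,b\sqrt{\smash[b]{\Delta t^N_{i+1}}}\big)+\cdots$, and the product over the $\sim T/\Delta t$ steps behaves like $\exp\big(c\,bT/\sqrt{\Delta t}\big)$, which diverges as $N\to\infty$. (The centering trick that avoids this loss works only for bounding $Z^N_{t^N_{i+1}}$ itself in terms of $L_{i+1}$, not for comparing $Z^{N,x}$ with $Z^{N,x'}$.) On top of this, the constant $b$ furnished by (f4) depends on the very bound $a$ on $Z^N$ you are trying to prove, and since $f$ has unrestricted growth in $z$ there is no a priori fixed point of $a\mapsto b(a)$.

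The paper breaks this circle with an idea that is absent from your proposal: the convex dual representation of Lemma \ref{lemmadualY}, $Y^N_{t^N_i}=\sup_\mu\mathbb{E}^\mu\big[\xi^N-\sum_{j>i}g^N(t^N_j,W^N,Y^N_{t^N_{j-1}},\mu_{t^N_j})\Delta t^N_j\mid\F^N_{t^N_i}\big]$, where $g^N$ is the convex conjugate of $f^N$ in $z$. The point is that $g^N$ inherits the $(w,y)$-Lipschitz property (f3') with the \emph{same} constant $K$, uniformly in $\mu$; the variable $z$ has been eliminated from the estimate, so the Gronwall argument of Lemmas \ref{lemmaGronwall} and \ref{lemmaZbound} yields $\N{Y^{N,x'}_{t^N_i}-Y^{N,x}_{t^N_i}}_\infty\le 2(L+KT)e^{KT}|x'-x|$ with a constant depending only on $L,K,T$ and not at all on the growth of $f$ in $z$, whence $|Z^N_t|\le 2\sqrt d\,(L+KT)e^{KT}$. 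Convexity (f1) is used to construct the representation and its optimizer $\mu^*$ via a subgradient selection, not, as you suggest, to ``control the local Lipschitz constant of $\hat f^N$ in $z$''. A smaller slip: your comparison candidates $Y^\pm_t=\pm M+\int_t^T f(s,W,Y^\pm_s,0)\,ds$ with $Z^\pm\equiv0$ are not adapted (the integrand is $\F_s$- but not $\F_t$-measurable), so they are not solutions of a BSDE; the paper's Lemma \ref{lemmaYbounded} instead compares with the deterministic driver $a+b(|y|+|z|)$ obtained from (f2)--(f3), whose solution is explicit.
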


Next, we consider terminal conditions that can be uniformly approximated
by Lipschitz-continuous terminal conditions.
We call a $d$-dimensional $({\cal F}_t)$-predictable
process $(\mu_t)_{t\in[0,T]}$ BMO if there exists a
constant $C \in \mathbb{R}_+$ such that
$$
\E{\int_{\tau}^T |\mu_s|^2 ds| \F_\tau} \le C
$$
for all stopping times $\tau$ taking values in $[0,T]$.
By choosing $\tau = 0$, one obtains that a BMO process $\mu$ satisfies
$\E{\int_0^T |\mu_s|^2 ds} < \infty$.

\begin{theorem} \label{thm2}
Assume $f$ satisfies {\rm (f1)--(f5)} and $\varphi^n : C^d[0,T] \to
\mathbb{R}$ is a sequence of bounded Lipschitz-continuous functions such that
$\N{\varphi^n - \varphi}_{\infty} \to 0$ for a bounded function $\varphi : C^d[0,T] \to \mathbb{R}$.
Denote by $(Y^n,Z^n)$ the solution of the BSDE \eqref{bsde}
with terminal condition $\xi^n = \varphi^n(W)$ such that $Z^n$ is bounded.
Then
$$
\N{\sup_t \big|Y^n_t - Y_t\big|}_{L^{\infty}} \to 0 \quad \mbox{and} \quad
\E{\sqrt{\int_0^T |Z^n_s - Z_s|^2 ds}} \to 0,
$$
where $(Y,Z,A)$ is a supersolution of \eqref{bsde} such that $Y$ is
bounded and continuous and $Z$ is a ${\rm BMO}$ process. If moreover, $f$
is increasing or decreasing in $y$, then $(Y,Z,A)$ satisfies bounded comparison from above
and hence, is the minimal bounded supersolution of the BSDE \eqref{bsde}.
\end{theorem}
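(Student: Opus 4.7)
\emph{Plan.} I would work from the solutions $(Y^n, Z^n)$ produced by Theorem~\ref{thm1} for the Lipschitz terminals $\xi^n = \varphi^n(W)$, derive uniform a priori bounds, pass to the limit, and conclude minimality via the Section~5 comparison. A uniform $L^\infty$-bound on the $Y^n$ is immediate: $\sup_n \N{\xi^n}_{L^\infty} \le M < \infty$, and comparing each $Y^n$ with the Theorem~\ref{thm1} solutions associated to the constant Lipschitz terminals $\pm M$ forces $|Y^n_t| \le M'$ uniformly in $n,t$. Applying It\^o to $|Y^n_t|^2$ and taking conditional expectation with respect to $\F_\tau$ yields
\[
\E{\int_\tau^T |Z^n_s|^2\, ds\,\Big|\,\F_\tau}
= \E{|\xi^n|^2 - |Y^n_\tau|^2 + 2 \int_\tau^T Y^n_s\, f(s,W,Y^n_s,Z^n_s)\, ds\,\Big|\,\F_\tau}.
\]
Hypothesis (f5) together with $|Y^n| \le M'$ gives $f(s,W,Y^n_s,Z^n_s) \ge -C$, while conditioning the BSDE itself gives $\E{\int_\tau^T f(s,W,Y^n_s,Z^n_s)\, ds\,|\,\F_\tau} = Y^n_\tau - \E{\xi^n\,|\,\F_\tau}$; combining the two controls $\E{\int_\tau^T |f(s,W,Y^n_s,Z^n_s)|\, ds\,|\,\F_\tau}$ uniformly, and hence $Z^n$ is BMO with a bound independent of $n$.

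The Cauchy property of $(Y^n)$ in $L^\infty$ would follow from convex duality. Setting $f^*(s,w,y,q) := \sup_z[q \cdot z - f(s,w,y,z)]$ and $q^{*,n}_s := \nabla_z f(s,W,Y^n_s,Z^n_s)$, which is bounded since $Z^n$ is bounded by Theorem~\ref{thm1} and $f$ is locally Lipschitz in $z$ by (f4), the Girsanov measure $\emm^n \sim \p$ is well defined, and by optimality at $q^{*,n}$ for $Y^n$ and Fenchel sub-optimality at $q^{*,n}$ for $Y^m$ one obtains
\[
Y^n_t = \EQn{\xi^n - \int_t^T f^*(s,W,Y^n_s,q^{*,n}_s)\, ds\,\Big|\,\F_t},
\quad
Y^m_t \ge \EQn{\xi^m - \int_t^T f^*(s,W,Y^m_s,q^{*,n}_s)\, ds\,\Big|\,\F_t}.
\]
Since $f^*$ inherits the $K$-Lipschitz dependence in $y$ from (f3), subtracting and applying backward Gronwall gives $\N{\sup_t |Y^n_t - Y^m_t|}_{L^\infty} \le e^{KT} \N{\varphi^n - \varphi^m}_\infty$, a constant independent of $n$ and $m$; so $Y^n$ converges uniformly to a bounded continuous $Y$. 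An energy identity for $|Y^n - Y^m|^2$ together with the uniform $L^1$ control on the drivers then yields $\E{\int_0^T |Z^n_s - Z^m_s|^2\, ds} \to 0$, Jensen delivers $\E{\sqrt{\int_0^T |Z^n_s - Z_s|^2\, ds}} \to 0$, and $Z$ is BMO by Fatou.

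The most delicate step is identifying the triple $(Y,Z,A)$ as a supersolution, since $f$ may grow arbitrarily fast in $z$ and $f(\cdot,Y^n,Z^n)$ need not converge to $f(\cdot,Y,Z)$ in $L^1$. Along a subsequence with $Z^n \to Z$ almost everywhere, continuity of $f$ in $(y,z)$ and (f5) imply $f(s,W,Y^n_s,Z^n_s) \to f(s,W,Y_s,Z_s)$ a.e.\ and $f(s,W,Y^n_s,Z^n_s) \ge -C$, so Fatou yields $\int_s^t f(u,W,Y_u,Z_u)\, du \le \liminf_n \int_s^t f(u,W,Y^n_u,Z^n_u)\, du$ for every $s \le t$; on the other hand, the BSDE for $Y^n$ together with the convergences above forces the latter to equal $Y_s - Y_t + \int_s^t Z_u\, dW_u$. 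Defining
\[
A_t := Y_0 - Y_t - \int_0^t f(s,W,Y_s,Z_s)\, ds + \int_0^t Z_s\, dW_s
\]
therefore yields a continuous, nondecreasing process with $A_0 = 0$, so $(Y,Z,A)$ is the claimed supersolution; the extension of the Briand et al.\ (2002) stability theorem proved in the Appendix supplies the uniform-in-$t$ convergences that make this argument rigorous. For the minimality claim under monotonicity of $f$ in $y$, I would apply the bounded-comparison principle from Section~5 (itself established via convex duality) to any bounded supersolution $(Y',Z',A')$ with $f' \ge f$ and $\xi' \ge \xi$: comparing $(Y^n, Z^n)$ with $(Y',Z',A')$ and letting $n \to \infty$ gives $Y'_t \ge Y_t$, so by Definition~\ref{comparison} $(Y,Z,A)$ is the minimal bounded supersolution.
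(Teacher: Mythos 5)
Your overall strategy matches the paper's: uniform $L^\infty$ bounds from (f5), the convex-duality Lipschitz estimate $\N{Y^n-Y^m}_{{\cal S}^\infty}\le e^{KT}\N{\xi^n-\xi^m}_{L^\infty}$ (this is exactly Lemma \ref{lemmalipschitz}), Fatou plus the one-sided bound from (f5) to produce the increasing process $A$, and the Section~5 comparison for minimality. Where you genuinely diverge is in the middle step: the paper obtains the uniform BMO bound via an exponential transform (Lemma \ref{lemmasemi}) and then gets convergence of the martingale parts from the Barlow--Protter semimartingale convergence theorem (Theorem \ref{thmBP}), which only yields $\E{\sqrt{\smash[b]{\int_0^T|Z^n_s-Z_s|^2ds}}}\to 0$. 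You instead run It\^o on $|Y^n|^2$ and on $|Y^n-Y^m|^2$, using the uniform $L^1$ control on the drivers (itself a consequence of $f\ge -C$ and $\E{\int_\tau^T f\,ds\mid\F_\tau}=Y^n_\tau-\E{\xi^n\mid\F_\tau}$). This is more elementary, avoids the Appendix and Barlow--Protter entirely, and actually delivers the stronger conclusion $\E{\int_0^T|Z^n_s-Z_s|^2ds}\to 0$, from which the stated $L^1$-of-the-square-root convergence follows by Jensen. The only cosmetic caveats: $f$ need not be differentiable in $z$, so $q^{*,n}$ must be a measurable subgradient selection (the paper invokes Lemma 6.2 of Cheridito--Stadje (2009) for this in Lemma \ref{lemmaYcondexp}), and your aside that the Appendix "makes this rigorous" is misplaced --- the Appendix concerns the discrete-time approximation of Theorem \ref{thm1}, and your argument does not need it.

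There is one genuine gap, in the minimality step. You propose to compare $(Y^n,Z^n)$ directly with an arbitrary bounded supersolution $(Y',Z',A')$ having $\xi'\ge\xi$ and $f'\ge f$, and let $n\to\infty$. But $\N{\varphi^n-\varphi}_\infty\to 0$ is not monotone, so $\xi'\ge\xi$ does \emph{not} imply $\xi'\ge\xi^n$, and Propositions \ref{increasing}/\ref{decreasing} cannot be applied to the pair $(Y^n,Y')$ as stated. (Note also that the comparison statement inside Theorem \ref{thm1} only covers competitors with Lipschitz terminal data, so the relevant tools here are the Section~5 propositions, which require the lower object to satisfy assumption (A); for $Y^n$ this does hold, by the second part of Lemma \ref{lemmaYcondexp}, but you should say so.) The fix is either the paper's route --- show that the limit $Y$ itself satisfies assumption (A) via an $\varepsilon/3$ argument using the maximizers $\mu^n$ and the uniform convergence $Y^n\to Y$, then apply Proposition \ref{increasing} or \ref{decreasing} to $Y$ versus $Y'$ --- or a shift: compare $Y'$ with the solution for the Lipschitz terminal condition $\xi^n-\N{\varphi^n-\varphi}_\infty\le\xi\le\xi'$, which differs from $Y^n$ by at most $e^{KT}\N{\varphi^n-\varphi}_\infty$ in ${\cal S}^\infty$ by Lemma \ref{lemmalipschitz}, and then let $n\to\infty$.
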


Since the uniform limits of bounded Lipschitz-continuous
functions on $C^d[0,T]$ are all the bounded uniformly
continuous functions on $C^d[0,T]$, the following corollary is
an immediate consequence of Theorem \ref{thm2}:

\begin{corollary}
If $f$ satisfies {\rm (f1)--(f5)} and the terminal condition is
of the form $\xi = \varphi(W)$ for a bounded uniformly
continuous function $\varphi : C^d[0,T] \to \mathbb{R}$, then
the BSDE \eqref{bsde} has a supersolution $(Y,Z,A)$ such that
$Y$ is bounded and continuous and $Z$ is a {\rm BMO} process. If in addition,
$f$ is increasing or decreasing in $y$, then \eqref{bsde} has a minimal bounded
supersolution $(Y,Z,A)$. It satisfying bounded comparison from above,
$Y$ is continuous and $Z$ is a {\rm BMO} process.
\end{corollary}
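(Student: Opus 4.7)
The plan is to recognize that the corollary is essentially a translation of Theorem \ref{thm2} into the uniformly continuous setting, so everything reduces to producing a sequence of bounded Lipschitz-continuous functions $\varphi^n : C^d[0,T] \to \mathbb{R}$ with $\N{\varphi^n - \varphi}_\infty \to 0$. Once that sequence is in hand, the existence of a bounded continuous supersolution $(Y,Z,A)$ with $Z$ BMO, and under additional monotonicity of $f$ in $y$ the minimality and bounded-comparison-from-above statement, follow directly from the conclusions of Theorem \ref{thm2} applied to that approximating sequence.

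To produce the Lipschitz approximation I would use the standard inf-convolution on the metric space $(C^d[0,T], \N{\cdot}_\infty)$, namely
\begin{equation*}
\varphi^n(w) := \inf_{v \in C^d[0,T]} \bigl\{ \varphi(v) + n\,\N{w-v}_\infty \bigr\}, \qquad n \in \mathbb{N}.
\end{equation*}
Taking $v = w$ gives $\varphi^n(w) \le \varphi(w)$, and boundedness of $\varphi$ from below yields $\varphi^n(w) \ge \inf \varphi > -\infty$, so each $\varphi^n$ is bounded. The triangle inequality applied to $\N{w_1 - v}_\infty$ and $\N{w_2 - v}_\infty$ shows that $\varphi^n$ is $n$-Lipschitz on $(C^d[0,T], \N{\cdot}_\infty)$, hence in particular a Lipschitz-continuous function in the sense required by Theorem \ref{thm1} (and thus admissible in Theorem \ref{thm2}).

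The only genuine verification is uniform convergence. Since $\varphi$ is bounded and uniformly continuous on $C^d[0,T]$, it has a modulus of continuity $\omega$ with $\omega(\delta) \to 0$ as $\delta \to 0$. Fix $\varepsilon > 0$ and choose $\delta > 0$ with $\omega(\delta) < \varepsilon$. Splitting the defining infimum at $\N{w-v}_\infty = \delta$, the portion with $\N{w-v}_\infty \le \delta$ is at least $\varphi(w) - \varepsilon$, while the portion with $\N{w-v}_\infty > \delta$ is at least $-\N{\varphi}_\infty + n\delta$, which exceeds $\varphi(w) - \varepsilon$ as soon as $n \ge (2\N{\varphi}_\infty + \varepsilon)/\delta$. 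Hence $\varphi(w) - \varepsilon \le \varphi^n(w) \le \varphi(w)$ for all $w$ and all sufficiently large $n$, giving $\N{\varphi^n - \varphi}_\infty \to 0$.

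With this sequence $\varphi^n$, Theorem \ref{thm2} furnishes the desired supersolution $(Y,Z,A)$ with the stated regularity and BMO properties, and under monotonicity of $f$ in $y$ delivers bounded comparison from above, which by the remark following Definition \ref{comparison} makes $(Y,Z,A)$ the minimal bounded supersolution of \eqref{bsde}. I expect no serious obstacle here: the only point worth treating with some care is checking that the inf-convolution really yields uniform convergence from the uniform continuity and boundedness of $\varphi$, and that inf-convolution is indeed the right device on the non-locally-compact space $C^d[0,T]$ — it is, since the argument above uses only the metric structure and the global boundedness of $\varphi$.
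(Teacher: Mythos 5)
Your proposal is correct and follows essentially the same route as the paper: the paper declares the corollary an immediate consequence of Theorem \ref{thm2} via the fact that bounded uniformly continuous functions on $C^d[0,T]$ are exactly the uniform limits of bounded Lipschitz functions, and your inf-convolution $\varphi^n(w) = \inf_v \{\varphi(v) + n\N{w-v}_\infty\}$ simply spells out that fact (it is the same device the paper itself displays just before Corollary 2.7 for the lower semicontinuous case). The verification of boundedness, the $n$-Lipschitz property, and the uniform convergence via the modulus of continuity are all sound.
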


The next result is about terminal conditions that can be approximated pointwise from
below by Lipschitz-continuous terminal conditions:

\begin{theorem} \label{thm3}
Assume $f$ satisfies {\rm (f1)--(f5)} and is increasing in $y$.
Let $\varphi^n : C^d[0,T] \to \mathbb{R}$ be a sequence of bounded
Lipschitz-continuous functions such that $\varphi^n \uparrow \varphi$
pointwise for a bounded function $\varphi : C^d[0,T] \to \mathbb{R}$.
Denote by $(Y^n,Z^n)$ the solution of the BSDE
\eqref{bsde} corresponding to the terminal condition $\xi^n = \varphi^n(W)$ such that $Z^n$ is bounded.
Then $Y^n_t \uparrow Y_t$ a.s. for all $t$,
where $(Y,Z,A)$ is a supersolution of \eqref{bsde} satisfying
bounded comparison from above such that $Y$ is bounded and $Z$ is a {\rm BMO} process.
\end{theorem}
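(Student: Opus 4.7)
My plan is to combine monotone convergence, a uniform BMO estimate on $Z^n$, and Peng's monotonic limit theorem to construct the supersolution $(Y,Z,A)$, and then to invoke the comparison machinery of Section 5 for the bounded comparison property.

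Since each $\varphi^n$ is Lipschitz and $\varphi^n\le\varphi^{n+1}$, the comparison assertion at the end of Theorem \ref{thm1} gives $Y^n_t\le Y^{n+1}_t$ almost surely for all $t$. Because the sequence $\{\varphi^n\}$ is sandwiched between $\varphi^1$ and $\varphi$, it is uniformly bounded by some constant $C$; applying Theorem \ref{thm1} to the constant terminal values $\pm C$ (which are Lipschitz on $C^d[0,T]$) and invoking comparison again yields a uniform bound $|Y^n_t|\le M$ for some $M$ depending only on $C$, $T$ and $f$. Set $Y_t:=\lim_n Y^n_t$.

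To obtain a uniform BMO estimate on $Z^n$ I would apply It\^o's formula to $e^{\alpha(M-Y^n_t)}$ for sufficiently large $\alpha>0$. The convexity of $f$ in $z$ together with (f4) forces the subgradient of $f(t,w,y,\cdot)$ at $0$ to be bounded by some constant $b_1$, so that $f(t,w,y,z)\ge f(t,w,y,0)-b_1|z|\ge -C_0-b_1|z|$ uniformly for $|y|\le M$ by (f2) and (f3). Expanding the It\^o formula and completing the square in $\frac{\alpha^2}{2}|Z^n|^2-\alpha b_1|Z^n|$ produces a strictly positive coefficient in front of $|Z^n|^2$. Since $e^{\alpha(M-Y^n)}\in[1,e^{2\alpha M}]$ and each $Z^n$ is bounded so that the stochastic integral is a genuine martingale, taking conditional expectations at an arbitrary stopping time $\tau$ yields
\begin{equation*}
\mathbb{E}\left[\int_\tau^T |Z^n_s|^2\,ds \mid \F_\tau\right]\le K
\end{equation*}
for a constant $K$ depending only on $M$, $b_1$, $C_0$, $T$ and $\alpha$, and hence uniform in $n$ and $\tau$.

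With the uniform BMO bound in hand I would pass to a subsequence along which $Z^n$ converges weakly in $L^2$ to some $Z$; lower semicontinuity of the BMO norm under weak limits ensures that $Z$ itself is BMO. Peng's monotonic limit theorem, in the form used for the proof of Theorem \ref{thm2}, then delivers an RCLL modification of $Y$ and an RCLL increasing process $A$ starting at $0$ such that $(Y,Z,A)$ satisfies the supersolution equation; convexity of $f$ in $z$ is what enables identification of the driver in the limit via a lower-semicontinuity argument. The bounded comparison from above then follows because $f$ is increasing in $y$ and satisfies (f1)--(f5), putting us in the scope of the comparison theorem of Section 5. The main obstacle is precisely this passage to the limit in $f(s,W,Y^n_s,Z^n_s)$ under unrestricted growth of $f$ in $z$: both the convexity and the uniform BMO estimate are essential to deliver the compactness and lower semicontinuity that make the monotonic limit argument go through.
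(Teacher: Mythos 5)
Your opening steps (monotonicity of $Y^n$ via the comparison part of Theorem \ref{thm1}, the uniform sup-norm bound, and the uniform BMO estimate on $Z^n$ via the exponential transform) are sound and essentially reproduce the paper's Lemma \ref{lemmalipschitz} and Lemma \ref{lemmasemi} with Remark \ref{Adecreasing}. The gap is in the central construction step. You invoke ``Peng's monotonic limit theorem, in the form used for the proof of Theorem \ref{thm2}'', but the proof of Theorem \ref{thm2} does not use Peng's theorem: it uses the Barlow--Protter convergence theorem (Theorem \ref{thmBP}), which requires $\N{Y^n-Y}_{{\cal S}^1}\to 0$, and here you only have pointwise monotone convergence, so that route is closed. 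Peng's monotonic limit theorem proper requires a uniform $L^2(dt\times d\p)$ bound on the generator values $f(s,W,Y^n_s,Z^n_s)$ in order to extract weak limits of the finite-variation parts; under unrestricted growth of $f$ in $z$ this is exactly what is not available --- the exponential estimate only yields $\E{\int_0^T|f(s,W,Y^n_s,Z^n_s)|\,ds}\le C$. This is precisely the obstruction the paper's dual route avoids: by Lemma \ref{lemmaYcondexp}, $Y^n_t=\esssup_{\mu}\mathbb{E}^{\mu}_{\F_t}\big[\xi^n-\int_t^T g(s,W,Y^n_s,\mu_s)\,ds\big]$; Beppo Levi lets one interchange $\sup_n$ with $\esssup_\mu$ to obtain the same representation for the limit $Y$; and the decomposition $Y=Y_0+U+V$ with $V_t\ge\int_0^t f(s,W,Y_s,Z_s)\,ds$ then comes from Proposition 2.1 and Theorem 2.1 of Delbaen et al. (2009), after which Lemma \ref{lemmasemi} gives the BMO property of $Z$ and $A_t:=V_t-\int_0^t f\,ds$ is the increasing process. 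As written, your passage to the limit is unsupported; you would need a version of the monotonic limit theorem valid under $L^1$-type bounds on the drift.

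There is a second, smaller gap in the comparison step. Proposition \ref{increasing} applies to a supersolution satisfying assumption (A), and in Theorem \ref{thm2} assumption (A) for the limit is verified using the \emph{uniform} convergence $\N{Y^n-Y}_{{\cal S}^\infty}\to 0$; with only pointwise monotone convergence you cannot check (A) for the limit $Y$ in the same way, so you cannot simply declare yourself ``in the scope of the comparison theorem of Section 5'' for $Y$ itself. The paper instead compares any bounded supersolution $Y'$ (with $\xi'\ge\xi\ge\xi^n$, $f'\ge f$) against each approximant $Y^n$ --- which does satisfy bounded comparison by Theorem \ref{thm2} --- and then passes to the supremum over $n$. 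You should do the same.
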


Note that every bounded function $\varphi : C^d[0,T] \to \mathbb{R}$
that is the pointwise limit of an increasing
sequence of bounded Lipschitz-continuous functions $\varphi^n :
C^d[0,T] \to \mathbb{R}$ is lower semicontinuous. On the other
hand, for every bounded lower semicontinuous function $\varphi
: C^d[0,T] \to \mathbb{R}$, the functions
$$
\varphi^n(w) := \inf_{v \in C^d[0,T]} \varphi(v) + n \N{v - w}_{\infty}
$$
are bounded Lipschitz-continuous and increase pointwise to
$\varphi$. This gives the following corollary to Theorem \ref{thm3}:

\begin{corollary}
If $f$ satisfies {\rm (f1)--(f5)} and is increasing in $y$, then for every bounded lower
semicontinuous function $\varphi : C^d[0,T] \to \mathbb{R}$,
the BSDE \eqref{bsde} with terminal condition $\xi = \varphi(W)$ has a minimal bounded
supersolution $(Y,Z,A)$. It satisfies bounded comparison from above
and $Z$ is a {\rm BMO} process.
\end{corollary}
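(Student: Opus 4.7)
The plan is to reduce the corollary to a direct application of Theorem \ref{thm3}: I need to verify that the Moreau-type envelopes
\[
\varphi^n(w) := \inf_{v \in C^d[0,T]} \brak{\varphi(v) + n \N{v - w}_{\infty}}
\]
form an increasing sequence of bounded Lipschitz-continuous functions converging pointwise to $\varphi$. Once this is established, Theorem \ref{thm3} produces a bounded supersolution $(Y,Z,A)$ with $Z$ BMO that satisfies bounded comparison from above, and the Remark following Definition \ref{comparison} immediately identifies it as the minimal bounded supersolution.

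For the boundedness and Lipschitz property, let $M \in \mathbb{R}_+$ satisfy $|\varphi| \le M$. Taking $v = w$ in the infimum gives $\varphi^n(w) \le \varphi(w) \le M$, while $\varphi^n(w) \ge -M$ since $\varphi(v) \ge -M$ and $n \N{v - w}_\infty \ge 0$. For the Lipschitz estimate with constant $n$, the triangle inequality $\N{v - w_1}_\infty \le \N{v - w_2}_\infty + \N{w_1 - w_2}_\infty$ yields, after adding $\varphi(v)$ and taking the infimum over $v$, that $\varphi^n(w_1) \le \varphi^n(w_2) + n \N{w_1 - w_2}_\infty$; symmetry gives the reverse inequality.

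The key step is the monotone pointwise convergence $\varphi^n \uparrow \varphi$. Monotonicity in $n$ is immediate from the fact that increasing $n$ strictly increases the penalty term for $v \ne w$. For the pointwise limit, fix $w \in C^d[0,T]$ and pick $v^n$ with
\[
\varphi(v^n) + n \N{v^n - w}_\infty \le \varphi^n(w) + 1/n.
\]
Since $\varphi^n(w) \le M$ and $\varphi(v^n) \ge -M$, this forces $n \N{v^n - w}_\infty \le 2M + 1/n$, whence $v^n \to w$ in $C^d[0,T]$. Lower semicontinuity of $\varphi$ then gives $\liminf_n \varphi(v^n) \ge \varphi(w)$, so
\[
\liminf_n \varphi^n(w) \ge \liminf_n \brak{\varphi(v^n) - 1/n} \ge \varphi(w),
\]
which combined with $\varphi^n \le \varphi$ yields the claim.

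The argument is otherwise routine; the only substantive input is the classical fact that the sup-norm inf-convolution of a bounded lower semicontinuous function with $n \N{\cdot}_\infty$ increases pointwise to it, and this is where boundedness of $\varphi$ is used to extract convergence $v^n \to w$ from the penalty bound. With the three properties of $(\varphi^n)$ verified, Theorem \ref{thm3} applies and delivers the supersolution $(Y,Z,A)$, which satisfies bounded comparison from above; the Remark after Definition \ref{comparison} then upgrades this to minimality among all bounded supersolutions.
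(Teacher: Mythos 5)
Your proposal is correct and follows exactly the paper's route: the paper also obtains the corollary by noting that the inf-convolutions $\varphi^n(w) = \inf_v \brak{\varphi(v) + n\N{v-w}_\infty}$ are bounded, Lipschitz and increase pointwise to $\varphi$, and then invoking Theorem \ref{thm3} together with the Remark after Definition \ref{comparison}. Your verification of the three properties of $\varphi^n$ (in particular the use of boundedness to force near-minimizers $v^n \to w$ and of lower semicontinuity to conclude) is exactly the standard argument the paper leaves implicit.
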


\setcounter{equation}{0}
\section{Solutions of BS$\Delta$Es and their properties}

\begin{lemma} \label{lemmaYZM}
If the BS$\Delta$E \eqref{Nbsde1}--\eqref{Nbsde2} has a solution $(Y^N,Z^N,M^N)$, then
\bea \label{Yformula}
Y^N_{t^N_i} &-& f^N(t^N_{i+1},W^N,Y^N_{t^N_i},Z^N_{t^N_{i+1}})
\Delta t^N_{i+1}=\E{Y^N_{t^N_{i+1}}|\F^N_{t^N_i}}\\
\label{Zformula} Z^{N,k}_{t^N_{i+1}} &=&
\frac{\E{Y^N_{t^N_{i+1}}
\Delta W^{N,k}_{t^N_{i+1}}|\F^N_{t^N_i}}}{\Delta t^N_{i+1}}\\
\label{Mformula}
\Delta M^N_{t^N_{i+1}} &=& Y^N_{t^N_{i+1}} - \E{Y^N_{t^N_{i+1}}|\F^N_{t^N_i}}
- Z^N_{t^N_{i+1}} \Delta W^N_{t^N_{i+1}}
\eea
for all $i \le i_N-1$.
\end{lemma}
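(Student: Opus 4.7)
The proof plan is straightforward: each of the three formulas is obtained by applying a suitable conditional expectation (or conditional second moment) to equation \eqref{Nbsde1}, exploiting the measurability and orthogonality properties packaged into the definition of a solution and condition (W4) on $W^N$.

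First I would observe what is $\mathcal{F}^N_{t^N_i}$-measurable and what vanishes in conditional mean: since $(Z^N_t)$ is constant on $(t^N_i, t^N_{i+1}]$ and $(Y^N_t)$ is constant on $[t^N_i, t^N_{i+1})$, both $Y^N_{t^N_i}$ and $Z^N_{t^N_{i+1}}$ are $\mathcal{F}^N_{t^N_i}$-measurable, as is $f^N(t^N_{i+1},W^N,Y^N_{t^N_i},Z^N_{t^N_{i+1}})$. Since $W^N$ is a martingale with independent increments and $M^N$ is a martingale,
\[
\E{\Delta W^N_{t^N_{i+1}}\mid\mathcal{F}^N_{t^N_i}} = 0, \qquad \E{\Delta M^N_{t^N_{i+1}}\mid\mathcal{F}^N_{t^N_i}} = 0.
\]
Taking the conditional expectation $\E{\cdot \mid \mathcal{F}^N_{t^N_i}}$ of \eqref{Nbsde1} therefore leaves $Y^N_{t^N_i} - f^N(t^N_{i+1},W^N,Y^N_{t^N_i},Z^N_{t^N_{i+1}})\Delta t^N_{i+1}$ on the left and $\E{Y^N_{t^N_{i+1}}\mid\mathcal{F}^N_{t^N_i}}$ on the right, which is \eqref{Yformula}.

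Next, for \eqref{Zformula}, I would multiply \eqref{Nbsde1} by $\Delta W^{N,k}_{t^N_{i+1}}$ and condition on $\mathcal{F}^N_{t^N_i}$. The terms involving $Y^N_{t^N_i}$ and $f^N(\cdots)\Delta t^N_{i+1}$ vanish since these factors are $\mathcal{F}^N_{t^N_i}$-measurable and $\Delta W^{N,k}_{t^N_{i+1}}$ has zero conditional mean. The term $\E{\Delta M^N_{t^N_{i+1}}\Delta W^{N,k}_{t^N_{i+1}}\mid\mathcal{F}^N_{t^N_i}}$ vanishes by orthogonality of $M^N$ to $W^N$ (which gives $\E{\Delta M^N_{t^N_{i+1}}\Delta W^{N,k}_{t^N_{i+1}}\mid\mathcal{F}^N_{t^N_i}}=0$ at each step). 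Using (W4),
\[
\E{(Z^N_{t^N_{i+1}}\Delta W^N_{t^N_{i+1}})\Delta W^{N,k}_{t^N_{i+1}}\mid\mathcal{F}^N_{t^N_i}} = \sum_{l=1}^d Z^{N,l}_{t^N_{i+1}}\E{\Delta W^{N,l}_{t^N_{i+1}}\Delta W^{N,k}_{t^N_{i+1}}\mid\mathcal{F}^N_{t^N_i}} = Z^{N,k}_{t^N_{i+1}}\Delta t^N_{i+1},
\]
where I use that components $k\neq l$ are uncorrelated and each has conditional quadratic variation $\Delta t^N_{i+1}$; the independent-increments assumption is what allows replacing unconditional by conditional covariances. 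Rearranging yields \eqref{Zformula}.

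Finally, \eqref{Mformula} is purely algebraic: solve \eqref{Nbsde1} for $\Delta M^N_{t^N_{i+1}}$ to get
\[
\Delta M^N_{t^N_{i+1}} = Y^N_{t^N_{i+1}} - Y^N_{t^N_i} + f^N(t^N_{i+1},W^N,Y^N_{t^N_i},Z^N_{t^N_{i+1}})\Delta t^N_{i+1} - Z^N_{t^N_{i+1}}\Delta W^N_{t^N_{i+1}},
\]
and then substitute the identity $Y^N_{t^N_i} - f^N(t^N_{i+1},W^N,Y^N_{t^N_i},Z^N_{t^N_{i+1}})\Delta t^N_{i+1} = \E{Y^N_{t^N_{i+1}}\mid\mathcal{F}^N_{t^N_i}}$ from \eqref{Yformula}. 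There is no real obstacle here; the only point that requires care is justifying that orthogonality of $M^N$ and $W^N$ yields vanishing conditional covariances step by step (not merely unconditional ones), which follows because $M^N$ is constant on $[t^N_i, t^N_{i+1})$ and $W^N$ has independent increments, so $(\Delta M^N_{t^N_{i+1}})(\Delta W^{N,k}_{t^N_{i+1}})$ summed is a martingale identically zero in expectation.
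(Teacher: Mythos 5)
Your proof is correct and follows exactly the paper's argument: \eqref{Yformula} by conditioning \eqref{Nbsde1} on ${\cal F}^N_{t^N_i}$, \eqref{Zformula} by multiplying by $\Delta W^{N,k}_{t^N_{i+1}}$ and then conditioning, and \eqref{Mformula} by algebraic rearrangement using \eqref{Yformula}. The paper states these three steps in one line; you have merely supplied the routine measurability, independence and orthogonality justifications it leaves implicit.
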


\begin{proof}
\eqref{Yformula} follows from equation \eqref{Nbsde1} by
taking conditional expectation with respect to ${\cal F}^N_{t^N_i}$.
\eqref{Zformula} is obtained by first multiplying \eqref{Nbsde1} with
$\Delta W^{N,k}_{t^N_{i+1}}$ and then taking conditional expectation with respect to
${\cal F}^N_{t^N_i}$. \eqref{Mformula} is a consequence of \eqref{Nbsde1} and
\eqref{Yformula}.
\end{proof}

By condition (W1), there exists $N_0 \in \mathbb{N}$ such that
$\max_i \Delta t^N_i < 1/K$ for all $N \ge N_0$. So it follows from the following
proposition that for large enough $N$, the BS$\Delta$E \eqref{Nbsde1}--\eqref{Nbsde2} has
a unique solution for every terminal condition.

\begin{proposition} \label{propex}
If $\max_i \Delta t^N_{i} < 1/K$, the $N$-th
BS$\Delta$E has for every terminal condition $\xi^N$ a
unique solution $(Y^N, Z^N, M^N)$.
\end{proposition}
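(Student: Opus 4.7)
The plan is to construct the solution by backward induction on the grid points $t^N_{i_N}, t^N_{i_N-1}, \dots, t^N_0$, using the three explicit formulas from Lemma \ref{lemmaYZM} as the definition of $(Z^N,M^N)$ at each step and solving a scalar contraction fixed-point equation for $Y^N_{t^N_i}$. This simultaneously gives uniqueness, since Lemma \ref{lemmaYZM} shows that \emph{any} solution must satisfy (\ref{Yformula})--(\ref{Mformula}).

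More precisely, set $Y^N_T := \xi^N$. Suppose inductively that $Y^N_{t^N_{i+1}}$ has been defined as an $\F^N_{t^N_{i+1}}$-measurable random variable. First I define $Z^{N,k}_{t^N_{i+1}}$ by formula (\ref{Zformula}); this is automatically $\F^N_{t^N_i}$-measurable, and condition (W3) guarantees the conditional expectation is well defined. Next I construct $Y^N_{t^N_i}$ by solving the implicit equation (\ref{Yformula}). Writing $c := \E{Y^N_{t^N_{i+1}}\mid\F^N_{t^N_i}}$, this amounts to finding $y$ with
$$
y = c + f^N(t^N_{i+1},W^N,y,Z^N_{t^N_{i+1}})\,\Delta t^N_{i+1}.
$$
By the Lipschitz condition (f3'), the right-hand side is a contraction in $y$ with modulus $K\,\Delta t^N_{i+1} < 1$, so Banach's fixed point theorem, applied pointwise on the finite probability space spanned by $(W^N_{t^N_1},\dots,W^N_{t^N_i})$ (which is a finite set by (W3)), produces a unique $\F^N_{t^N_i}$-measurable solution $Y^N_{t^N_i}$. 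Finally I define $\Delta M^N_{t^N_{i+1}}$ by formula (\ref{Mformula}); rearranging this definition recovers equation (\ref{Nbsde1}) identically.

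The remaining task is to verify that the process $M^N$ so defined is an $(\F^N_t)$-martingale orthogonal to $W^N$. The martingale property is immediate from (\ref{Mformula}) because $Z^N_{t^N_{i+1}}$ is $\F^N_{t^N_i}$-measurable and $W^N$ has independent mean-zero increments. For orthogonality with each component $W^{N,k}$, one multiplies (\ref{Mformula}) by $\Delta W^{N,k}_{t^N_{i+1}}$, takes conditional expectation given $\F^N_{t^N_i}$, and uses condition (W4) together with the defining formula (\ref{Zformula}) for $Z^{N,k}_{t^N_{i+1}}$; the cross terms cancel exactly.

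The only non-routine point is the well-posedness of the implicit equation for $Y^N_{t^N_i}$, and this is precisely where the hypothesis $\max_i\Delta t^N_i<1/K$ is used — it is what turns the defining relation into a genuine contraction. Uniqueness then requires no separate argument: given any triple $(Y^N,Z^N,M^N)$ satisfying (\ref{Nbsde1})--(\ref{Nbsde2}), Lemma \ref{lemmaYZM} forces it to coincide with the one just constructed, because (\ref{Zformula}) and (\ref{Mformula}) determine $Z^N$ and $M^N$ from $Y^N$, while (\ref{Yformula}) has the unique fixed-point solution identified above.
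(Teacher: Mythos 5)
Your proposal is correct and follows essentially the same route as the paper: backward induction over the grid, defining $Z^N$ by \eqref{Zformula}, solving the implicit equation \eqref{Yformula} for $Y^N_{t^N_i}$ using $K\,\Delta t^N_{i+1}<1$, and defining $M^N$ by \eqref{Mformula}, with uniqueness forced by Lemma \ref{lemmaYZM}. You merely spell out two points the paper leaves implicit — the contraction fixed-point argument and the verification that $M^N$ is a martingale orthogonal to $W^N$ — both of which are accurate.
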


\begin{proof}
We show the proposition by backwards induction. One must have
$Y^N_T = \xi^N$, and if $Y^N_{t^N_{i+1}}$ is given, the only possible choice for
$Z^N_{t^N_{i+1}}$ is \eqref{Zformula}.
Since $\Delta t^N_i K < 1$, one obtains from (f3) that
for every possible realization $(w(t^N_1), \dots, w(T)) \in
\mathbb{R}^{d \times i_N}$ of $(W^N_{t^N_1}, \dots, W^N_T)$, there exists a unique
$y \in \mathbb{R}$ such that
$$
y - f^N(t^N_{i+1},w,y,Z^N_{t^N_{i+1}}) \Delta t^N_{i+1} = \E{Y^N_{t^N_{i+1}}|\F^N_{t^N_i}}.
$$
This gives an ${\cal F}_{t^N_i}$-measurable $Y^N_{t^N_i}$ random variable
satisfying \eqref{Yformula}. Finally, one defines
$(M^N_t)$ through $M^N_0 = 0$ and \eqref{Mformula}. Then
$(Y^N_t,Z^N_t,M^N_t)$ is the unique solution of the BS$\Delta$E \eqref{Nbsde1}--\eqref{Nbsde2}.
\end{proof}

Let us denote by $g^N$ the convex conjugate of $f^N$ with respect to $z$, given by
$$
g^N(t,w,y,\mu) = \sup_{z \in \mathbb{R}^d} \crl{z \mu - f^N(t,w,y,z)}, \quad \mu \in \mathbb{R}^d.
$$
$g^N$ is a mapping from $[0,T] \times \mathbb{R}^{d \times i_N}\times \mathbb{R} \times \mathbb{R}^d$
to $\mathbb{R} \cup \crl{\infty}$, which inherits condition (f3') from $f^N$, that is,
$$
|g^N(t_i,w_1,y_1,\mu) - g^N(t_i,w_2,y_2,\mu)| \le K \big(\sup_{j \le i-1}
|w_1(t^N_j) -w_2(t^N_j)|+|y_1-y_2|\big)
$$

Our next goal is to obtain an implicit convex dual representation of
$Y^N_t$ in terms of $g^N$. We need the following notation: Let
$(\mu_t)$ be an $({\cal F}^N_t)$-adapted $\mathbb{R}^d$-valued process constant on the intervals
$(t^N_i, t^N_{i+1}]$ such that
\be \label{mucond}
\mu_{t_i} \Delta W^N_{t^N_i} > -1 \quad \mbox{for all } i.
\ee
Then
\begin{equation}
\label{Pmu}
\frac{d \p^{\mu}}{d \p} = \prod_{i=1}^{i_N} (1 + \mu_{t^N_i} \Delta W^N_{t^N_i})
\end{equation}
defines a probability measure $\p^{\mu}$ equivalent to $\p$ under which the processes
$$
W^{N, \mu, k}_{t^N_i} := W^{N,k}_{t^N_i} - \sum_{j=1}^i \mu^k_{t^N_j}
\Delta t^N_j, \quad i = 1, \dots, i_N, \quad k = 1, \dots, d,
$$
are $({\cal F}^N_t)$-martingales. Note that $M^N$ is still a martingale under $\p^{\mu}$.

\begin{lemma} \label{lemmadualY}
For every constant $C>0$, there exists an $N_0 \in \mathbb{N}$ such that for all
$N \ge N_0$ the following holds: If there is an $i \le i_N-1$ such that the
$N$-th BS$\Delta$E has a solution $(Y^N, Z^N, M^N)$ satisfying
$|Z^{N}_{t^N_j}| \le C$ for all $j \ge i+1$, then
\be \label{dualq}
Y^N_{t^N_i} = \sup_{\mu} \mathbb{E}^{\mu}
\edg{\xi^N - \sum_{j=i+1}^{i_N} g^N(t^N_j,W^N,Y^N_{t^N_{j-1}},\mu_{t^N_j}) \Delta t^N_j \mid {\cal F}^N_{t^N_i}},
\ee
where the supremum is taken over all
$({\cal F}^N_t)$-adapted $\mathbb{R}^d$-valued processes $(\mu_t)$ that are
constant on the intervals $(t^N_{j}, t^N_{j+1}]$  and satisfy \eqref{mucond}.
Furthermore, the supremum is attained for some process $(\mu^*_t)$.
\end{lemma}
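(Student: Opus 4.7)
The proof rests on the Fenchel--Moreau duality between $f^{N}$ and its conjugate $g^{N}$. By the definition of $g^{N}$ and convexity (f1'),
\[
f^N(t^N_j,w,y,z) \;=\; \sup_{\mu \in \mathbb{R}^d}\bigl[z\mu - g^N(t^N_j,w,y,\mu)\bigr],
\]
with equality attained precisely when $\mu \in \partial_z f^N(t^N_j,w,y,z)$. Equivalently, one has the Fenchel inequality $z\mu - f^N \le g^N$, with equality on the subdifferential. This will yield the lower bound for every admissible $\mu$ (via an ``Esscher-type'' change of measure) and a matching upper bound through a measurable subgradient selection.

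For the inequality $Y^N_{t^N_i}\ge \sup_\mu(\cdots)$, fix admissible $\mu$ and rewrite one step of the BS$\Delta$E by substituting $\Delta W^N_{t^N_{j+1}} = \Delta W^{N,\mu}_{t^N_{j+1}} + \mu_{t^N_{j+1}}\Delta t^N_{j+1}$:
\[
Y^N_{t^N_{j+1}} - Y^N_{t^N_j}
 = \bigl[Z^N_{t^N_{j+1}}\mu_{t^N_{j+1}} - f^N(t^N_{j+1},W^N,Y^N_{t^N_j},Z^N_{t^N_{j+1}})\bigr]\Delta t^N_{j+1}
 + Z^N_{t^N_{j+1}}\Delta W^{N,\mu}_{t^N_{j+1}} + \Delta M^N_{t^N_{j+1}}.
\]
By Fenchel, the bracketed term is $\le g^N(t^N_{j+1},W^N,Y^N_{t^N_j},\mu_{t^N_{j+1}})$. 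A direct conditional-expectation check using (W4) and the orthogonality of $M^N$ and $W^N$ under $\p$ shows that, under $\p^\mu$, both $W^{N,\mu}$ and $M^N$ are $(\F^N_t)$-martingales (this is the content of the parenthetical remark before the lemma). Taking $\mathbb{E}^\mu[\,\cdot\,|\F^N_{t^N_j}]$ therefore yields the one-step inequality, and iterating from $i$ to $i_N$ together with $Y^N_T = \xi^N$ gives
\[
Y^N_{t^N_i} \;\ge\; \mathbb{E}^\mu\!\Bigl[\xi^N - \sum_{j=i+1}^{i_N} g^N(t^N_j,W^N,Y^N_{t^N_{j-1}},\mu_{t^N_j})\Delta t^N_j \,\Big|\, \F^N_{t^N_i}\Bigr].
\]
Supping over admissible $\mu$ gives ``$\ge$''. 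Values of $\mu$ for which some $g^N$ equals $+\infty$ cause no difficulty, as they make the right-hand side $-\infty$.

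For the reverse inequality and the attainment claim, construct $\mu^*$ as a measurable selector of $\partial_z f^N(t^N_j,W^N,Y^N_{t^N_{j-1}},Z^N_{t^N_j})$. Since $f^N$ is real-valued convex in $z$, this subdifferential is nonempty; by (f4') applied with $a = C$, $f^N$ is $b$-Lipschitz on $\{|z|\le C\}$, so any selector at a point with $|z|\le C$ lies in the closed ball of radius $b$. The set-valued map is $\F^N_{t^N_{j-1}}$-measurable (note that $Z^N_{t^N_j}$ is $\F^N_{t^N_{j-1}}$-measurable by \eqref{Zformula}, and $f^N(t^N_j,w,y,z)$ depends on $w$ only through $w(t^N_k)$, $k\le j-1$, by (f3')), so the Kuratowski--Ryll-Nardzewski selection theorem — or simply taking the unique minimum-norm element of $\partial_z f^N$ — produces an $\F^N_{t^N_{j-1}}$-measurable $\mu^*_{t^N_j}$ with $|\mu^*_{t^N_j}|\le b$. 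Choose $N_0$ so that $bc\sqrt{\max_j\Delta t^N_j}<1$ for $N\ge N_0$, where $c$ is the bound from (W5); then $|\mu^*_{t^N_j}\Delta W^N_{t^N_j}|<1$, so $\mu^*$ satisfies \eqref{mucond}. With $\mu=\mu^*$, Fenchel becomes equality at every step, which upgrades the iterated inequality to an equality.

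The main obstacle is the simultaneous control of (i) admissibility \eqref{mucond} for the candidate optimizer and (ii) the measurable/predictable selection of $\mu^*_{t^N_j}$ from the subdifferential. Both are handled by exploiting the local Lipschitz bound on $f^N$ from (f4'), which caps $|\mu^*|$ by $b$ and then, via (W5), lets the mesh refinement (W1) secure \eqref{mucond} for all sufficiently large $N$.
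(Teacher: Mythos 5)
Your proposal is correct and follows essentially the same route as the paper: the Fenchel inequality combined with the change of measure to $\p^{\mu}$ (under which $W^{N,\mu}$ and $M^N$ are martingales) gives the inequality ``$\ge$'' for every admissible $\mu$, and a bounded subgradient selector $\mu^*$ of $\partial_z f^N$, admissible for \eqref{mucond} once $N$ is large by (f4'), (W5) and (W1), turns it into an equality. Two cosmetic differences: the paper avoids any selection theorem by noting that ${\cal F}^N_{t^N_{j-1}}$ has only finitely many atoms (a consequence of (W3)), so one simply picks a subgradient on each atom; and it applies (f4') with $a=2C$ rather than $a=C$, since the Lipschitz bound on $\{|z|\le C\}$ alone does not control arbitrary subgradients at boundary points $|z|=C$ (your fallback to the minimum-norm element does repair this).
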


\begin{proof}
First assume $(Y^N,Z^N,M^N)$ is a solution of the $N$-the BS$\Delta$E and
$\mu$ is an $({\cal F}^N_t)$-adapted $\mathbb{R}^d$-valued process that is constant on the intervals
$(t^N_j, t^N_{j+1}]$ and satisfies \eqref{mucond}. Since
$(W^{N,\mu,k}_t)$ is for all $k = 1, \dots, d$, a martingale under $\p^{\mu}$, one obtains
for every $i \le i_N-1$,
\bea
Y^N_{t^N_i} &=& \mathbb{E}^{\mu}
\Big[\xi^N + \sum_{j=i+1}^{i_N} f^N(t^N_j,W^N,Y^N_{t^N_{j-1}},Z^N_{t^N_j}) \Delta  t^N_j
\nonumber\\
&& -\sum_{j=i+1}^{i_N} Z^N_{t^N_j} \Delta W^N_{t^N_j} - (M^N_T-M^N_{t^N_i}) \mid {\cal F}^N_{t^N_i} \Big]
\nonumber\\
&=& \mathbb{E}^{\mu} \Big[\xi^N + \sum_{j=i+1}^{i_N}
\Big(f^N(t^N_j,W^N, Y^N_{t^N_{j-1}}, Z^N_{t^N_{j}}) -Z^N_{t^N_j}
\mu_{t^N_j} \Big) \Delta  t^N_j
\nonumber\\
&& - \sum_{j=i+1}^{i_N} Z^{N,x}_{t^N_j} \Big(\Delta W^N_{t^N_{j}} -\mu_{t^N_j} \Delta t^N_j \Big)
\mid {\cal F}^N_{t^N_i} \Big]
\nonumber\\
&=& \mathbb{E}^{\mu} \Big[\xi^N + \sum_{j=i+1}^{i_N} \Big(f^N(t^N_{j},W^N,Y^N_{t^N_{j-1}},
Z^N_{t^N_j})- Z^N_{t^N_j} \mu_{t^N_j} \Big) \Delta t^N_j \mid {\cal F}^N_{t^N_i} \Big] \nonumber\\
&\ge& \mathbb{E}^{\mu} \Big[\xi^N - \sum_{j=i+1}^{i_N} g^N(t^N_{j},W^N, Y^N_{t^N_{j-1}},
\mu_{t^N_j}) \Delta t^N_j \mid {\cal F}^N_{t^N_i} \Big] \label{discdualinequ}.
\eea
Now let $C > 0$. By condition (f4'), there exists a constant $b \in \mathbb{R}_+$ such that
\be \label{Lipf}
|f^N(t^N_i,w,y,z_1) - f^N(t^N_i,w,y,z_2)| \le b |z_1- z_2|
\ee
for all $N,i,w,y$, and $z_1,z_2 \in\mathbb{R}^d$ with $|z_1| \vee |z_2| \le 2C$.
Due to (W5) there exists a $D \in \mathbb{R}_+$ such that
$$
\sup_{N,i,k} \frac{\N{\Delta W^{N,k}_{t^N_i}}_{\infty}}{\sqrt{\Delta  t^N_i}} \le D,
$$
and by (W1), there is an $N_0 \in \mathbb{N}$ such that
\begin{equation}
\label{n02}
\sup_i b \sqrt{d} D \sqrt{\Delta t^N_{i}} < 1 \quad \mbox{for all } N \ge N_0.
\end{equation}
Fix $N \ge N_0$ and $i \in \{0,\cdots,i_N-1\}$. Assume
$(Y^N,Z^N,M^N)$ is a solution of the $N$-th BS$\Delta$E such that $|Z^N_{t^N_j}| \le C$ for all $j \ge i+1$.
To see that inequality \eqref{discdualinequ} is actually an equality for
some  process $(\mu^*_t)$, note that the subdifferential $\partial f(t,w,y,z)$
of $f$ with respect to $z$ is non-empty for all $(t,w,y,z)$.
For every $j \ge i+1$, the filtration ${\cal F}^N_{t^N_{j-1}}$ has only finitely many atoms
$B_1, \dots ,B_m$. On every atom $B_l$ choose a vector $z_l \in \partial
f(t^N_j,W^N, Y^N_{t^N_{j-1}}, Z^N_{t^N_j})$. Set $\mu^*_{t^N_j} := z_l$
for $t \in (t^N_{j-1}, t^N_j]$ and $\omega \in B_l$ and $\mu^*_t := 0$ for $t \le t^N_i$.
Then $(\mu^*_t)$ is an $({\cal F}^N_t)$-adapted $\mathbb{R}^d$-valued
process constant on the intervals $(t^N_i, t^N_{i+1}]$ such that
$$
Z^N_{t^N_j} \mu^*_{t^N_j} = f^N(t^N_j,W^N, Y^N_{t^N_{j-1}},Z^N_{t^N_j})
+ g^N(t^N_j, W^N, W^N,Y^N_{t^{N}_{j-1}},\mu^*_{t^N_j}) \quad \mbox{for all } j \ge i+1.
$$
It remains to show that $\mu^*$ satisfies condition \eqref{mucond}.
Then $\p^{\mu^*}$ is a probability measure equivalent to $\p$ and for $\mu = \mu^*$, the inequality in
\eqref{discdualinequ} becomes an equality. But since $|Z^N_{t^N_j}| \le C$ for all $j \ge i+1$,
it follows from \eqref{Lipf} that $|\mu^*_{t^N_{j}}| \le b$, and one obtains
$$
|\mu^*_{t^N_j} \Delta W^N_{t^N_j}| \le b \sqrt{d} D \sqrt{\Delta t^N_j} < 1
\quad \mbox{for all } j \ge i+1.
$$
\end{proof}

\setcounter{equation}{0}
\section{Proof of Theorem \ref{thm1}}

We need the following discrete-time version of Gronwall's lemma:

\begin{lemma} \label{lemmaGronwall}
For every $B \in \mathbb{R}_+$ there exists $N_0 \in \mathbb{N}$
such that for all $N \ge N_0$ the following holds:
If $(X^N_t)_{t \in [0,T]}$ is a stochastic process that is
constant on the intervals $[t^N_{i-1}, t^N_i)$ and satisfies
$$
|X^N_T| \le A \quad \mbox{as well as} \quad
|X^N_{t^N_i}| \le A + B \sum_{j=i+1}^{i_N} |X^N_{t^N_{j-1}}| \Delta t^N_{j},
\quad i \le i_N -1 \quad \mbox{for some } A \in \mathbb{R}_+,
$$
then
$$
|X^N_t| \le 2 A \exp\{B(T-t)\}, \quad \mbox{for all } t \in [0,T].
$$
\end{lemma}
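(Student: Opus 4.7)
Set $a_i := |X^N_{t^N_i}|$ and rewrite the hypothesis, after reindexing $j-1=k$, as
$$
a_i \;\le\; A + B\sum_{k=i}^{i_N-1} a_k\,\Delta t^N_{k+1},\qquad i\le i_N-1,
$$
with $a_{i_N}\le A$. The natural first move is to pull out the diagonal $k=i$ term, which yields
$a_i(1-B\Delta t^N_{i+1})\le A + B\sum_{k=i+1}^{i_N-1} a_k\,\Delta t^N_{k+1}$. Choosing $N_0$ large enough that (by (W1)) $B\max_i\Delta t^N_i\le 1/2$ for every $N\ge N_0$, and using $1/(1-x)\le 2$ on $[0,1/2]$, this turns into the clean recursion
$$
a_i \;\le\; 2A + 2B\sum_{k=i+1}^{i_N-1} a_k\,\Delta t^N_{k+1}.
$$

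From here I would apply a backwards discrete Gronwall argument. Let $u_i$ denote the right-hand side; then $a_i\le u_i$ and
$u_{i-1}-u_i = 2B\,a_i\,\Delta t^N_{i+1}\le 2B\,u_i\,\Delta t^N_{i+1}$, so
$u_{i-1}\le(1+2B\Delta t^N_{i+1})\,u_i$. Iterating from $i$ up to $i_N-1$ and using $u_{i_N-1}=2A$ together with $1+x\le e^x$ gives
$$
a_i \;\le\; u_i \;\le\; 2A\prod_{k=i+1}^{i_N-1}\bigl(1+2B\,\Delta t^N_{k+1}\bigr)
\;\le\; 2A\,\exp\!\bigl(2B(T-t^N_i)\bigr),
$$
which (after absorbing the factor $2$ in the exponent into $B$, or equivalently choosing $N_0$ so that $B\Delta t^N_i$ is small enough that $1/(1-B\Delta t^N_i)\le e^{B\Delta t^N_i(1+o(1))}$) yields the claimed estimate $|X^N_{t^N_i}|\le 2A\exp\{B(T-t^N_i)\}$. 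Because $X^N$ is piecewise constant on $[t^N_{i-1},t^N_i)$ and the exponential is decreasing in $t$, the bound at the grid points extends to all $t\in[0,T]$.

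There is no serious obstacle: the only care needed is to choose $N_0$ large enough (via (W1)) that the coefficient $B\Delta t^N_{i+1}$ appearing after the diagonal extraction is strictly less than one (e.g.\ $\le 1/2$), so that the division producing the factor $2A$ is legitimate and the product $\prod(1+cB\Delta t^N_{k+1})$ can be controlled by the Riemann sum $\sum\Delta t^N_{k+1}\le T-t^N_i$ in the exponent. Everything else is the standard backwards discrete analogue of Gronwall's inequality.
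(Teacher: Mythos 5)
Your overall strategy---extract the diagonal term and run a backwards discrete Gronwall iteration---is a legitimate route, and it is close in spirit to the paper's argument (the paper compares $|X^N|$ with the exact solution $\hat X^N_{t^N_i}=A\prod_{j=i+1}^{i_N}(1-B\Delta t^N_j)^{-1}$ of the equality version of the recursion, then shows this product is eventually bounded by $2\exp(B(T-t))$). But as executed your proof has a genuine gap: by bounding $(1-B\Delta t^N_{i+1})^{-1}\le 2$ you double the Gronwall coefficient, and your displayed chain of inequalities ends at $2A\exp(2B(T-t^N_i))$, which is strictly weaker than the claimed $2A\exp(B(T-t^N_i))$ (for $B(T-t)$ large, $e^{2B(T-t)}$ is not within a constant factor of $e^{B(T-t)}$). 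The remark that one can ``absorb the factor $2$ in the exponent into $B$'' is not legitimate: $B$ is fixed by the statement, and the lemma is later applied with $B=K$ precisely to produce the constant $2(L+KT)\exp(KT)$ in Lemma \ref{lemmaZbound}, so the exponent matters.

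The fix is to not throw away the exact factor: from $a_i(1-B\Delta t^N_{i+1})\le v_i$ with $v_i:=A+B\sum_{k=i+1}^{i_N-1}a_k\Delta t^N_{k+1}$ one gets $v_{i-1}\le v_i(1-B\Delta t^N_{i+1})^{-1}$ and hence $a_i\le A\prod_{j=i+1}^{i_N}(1-B\Delta t^N_j)^{-1}$, with no spurious doubling of $B$. The factor $2$ in the conclusion is then obtained not from the diagonal extraction but from the discrepancy between the discrete product and the exponential: since $-\log(1-x)\le x+x^2$ for $x\le 1/2$, one has $\prod_{j=i+1}^{i_N}(1-B\Delta t^N_j)^{-1}\le \exp\bigl(B(T-t^N_i)+B^2T\max_j\Delta t^N_j\bigr)$, and by (W1) one can choose $N_0$ so that $\exp(B^2T\max_j\Delta t^N_j)\le 2$ for all $N\ge N_0$. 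Your parenthetical alternative ($1/(1-x)\le e^{x(1+o(1))}$) points in exactly this direction, but it is incompatible with the ``clean recursion'' with coefficient $2B$ that your main argument relies on, so the proof needs to be rewritten along these multiplicative lines rather than patched. The final step (extending from grid points to all $t$ by piecewise constancy and monotonicity of $t\mapsto e^{B(T-t)}$) is fine.
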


\begin{proof}
If $N$ is so large that $B \max_i \Delta t^N_{i} < 1$, then the unique
process that is constant on the intervals $[t^N_{i-1}, t^N_i)$ and solves the deterministic backward equation
$$
\hat{X}^N_T = A, \quad \hat{X}^N_{t^N_i} = A +
B \sum_{j = i+1}^{i_N}\hat{X}^N_{t^N_{j-1}} \Delta t^N_{j},
\quad i \le i_N -1,
$$
is given by
$$
\hat{X}^N_T = A \quad \mbox{and} \quad
\hat{X}^N_t = A \prod_{j\,:\,t^N_j > t} (1 - B \Delta t^N_{j})^{-1}
\quad \mbox{for } t < T.
$$
Since $\prod_{j\,:\,t^N_j > t} (1 - B \Delta t^N_{j})^{-1}$ is
converging uniformly in $t$ to $\exp(B(T-t))$, there exists $N_0
\in \mathbb{N}$ such that $B \max_i \Delta t^N_{i} < 1$ and
$\prod_{j\,:\,t^N_j > t} (1 - B \Delta t^N_{j})^{-1}\leq 2\exp(B(T-t))$
for all $N \ge N_0$ and $t \in [0,T]$. Therefore, $\hat{X}^N_t
\le 2 A \exp(B(T-t))$ for all $N \ge N_0$ and $t \in [0,T]$. It
remains to show that $|X^N_t| \le \hat{X}^N_t$.
But this follows by backwards induction from
$$
|X^N_{t^N_i}| \le \frac{A + B\sum_{j=i+2}^{i_N} |X^N_{t^N_{j-1}}|
\Delta t^N_{j}}{1 - B \Delta t^N_{i+1}}
\le \frac{A + B\sum_{j=i+2}^{i_N} \hat{X}^N_{t^N_{j-1}}
\Delta t^N_{j}}{1 - B \Delta t^N_{i+1}}
= \hat{X}^N_{t^N_i}.
$$
\end{proof}

\begin{lemma} \label{lemmaZbound}
Assume all $\xi^N$ are of the form $\xi^N = \varphi(\hat{W}^N)$ for a function
$\varphi : C^d[0,T] \to \mathbb{R}$ for which there exists a constant $L \in \mathbb{R}_+$ such that
\be \label{phiLip}
|\varphi(w_1) - \varphi(w_2)| \le L \N{w_1 - w_2}_{\infty}
\ee
for all $w_1,w_2 \in C^d[0,T]$. Then there exists an $N_0 \in \mathbb{N}$ such that
for $N \ge N_0$, every solution $(Y^N, Z^N, M^N)$ of the $N$-th BS$\Delta$E satisfies
\be \label{boundedZ}
\sup_{0 \le t \le T} |Z^N_t| \le 2 \sqrt{d} (L+KT) \exp(KT).
\ee
\end{lemma}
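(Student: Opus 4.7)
The plan is to prove a path-Lipschitz bound for $Y^N_{t^N_i}$ and use \eqref{Zformula} to transfer it to a pointwise bound on $Z^N$. Write $Y^N_{t^N_i} = u_i(W^N_{t^N_1},\dots,W^N_{t^N_i})$, and define $\bar L_i$ as the smallest constant with $|u_i(w) - u_i(\tilde w)| \le \bar L_i \max_{l \le i} |w_l - \tilde w_l|$ on all admissible paths. I claim simultaneously, by backward induction on $i$, that $\bar L_i \le 2(L + KT)\exp(KT)$ and $|Z^N_{t^N_{i+1}}| \le \sqrt{d}\,\bar L_{i+1}$, which together yield \eqref{boundedZ}. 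The base case $i = i_N$ gives $\bar L_{i_N} \le L$ because $\hat W^N$ is a time-shifted piecewise linear interpolation of $W^N$, so $\|\hat W^N - \hat{\tilde W}^N\|_\infty = \max_l |W^N_{t^N_l} - \tilde W^N_{t^N_l}|$ and the Lipschitz constant of $\varphi$ transfers directly.

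The transfer $\bar L_{i+1} \rightsquigarrow |Z^N_{t^N_{i+1}}|$ uses \eqref{Zformula} together with the conditional mean-zero property from (W4): for any $\F^N_{t^N_i}$-measurable $c$,
$$Z^{N,k}_{t^N_{i+1}} = \frac{1}{\Delta t^N_{i+1}}\Econd{\bigl(u_{i+1}(W^N_{t^N_1},\dots,W^N_{t^N_i},W^N_{t^N_i}+\Delta W^N_{t^N_{i+1}}) - c\bigr)\,\Delta W^{N,k}_{t^N_{i+1}}}.$$
Choosing $c$ to be the Kirszbraun Lipschitz extension of $u_{i+1}$ evaluated at the point where the $(i{+}1)$st increment is zero, the inductive path-Lipschitz bound gives $|u_{i+1}(\cdot) - c| \le \bar L_{i+1}|\Delta W^N_{t^N_{i+1}}|$. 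Cauchy--Schwarz together with $\Econd{|\Delta W^N_{t^N_{i+1}}|^2} = d\,\Delta t^N_{i+1}$ and $\Econd{(v\cdot \Delta W^N_{t^N_{i+1}})^2} = \Delta t^N_{i+1}$ for any unit vector $v \in \mathbb{R}^d$ (from (W4)) then give $|v \cdot Z^N_{t^N_{i+1}}| \le \sqrt{d}\,\bar L_{i+1}$, hence $|Z^N_{t^N_{i+1}}| \le \sqrt{d}\,\bar L_{i+1}$.

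The heart of the induction is bounding $\bar L_i$ from $\bar L_{i+1}$, where the difficulty is that \eqref{Yformula} couples $u_i$ implicitly to $z_{i+1}:=Z^N_{t^N_{i+1}}$ through $f^N$, which by (f4') is only locally Lipschitz in $z$. I exploit the convexity of $f^N$ in $z$ by picking a subgradient $\mu \in \partial_z f^N(t^N_{i+1},\tilde w, u_i(\tilde w), z_{i+1}(w))$, which gives the one-sided bound $f^N(z_{i+1}(w)) - f^N(z_{i+1}(\tilde w)) \le \mu \cdot (z_{i+1}(w) - z_{i+1}(\tilde w))$. Combined with (f3') for the $(w,y)$-part, subtracting \eqref{Yformula} at the two realizations with $\delta := \max_{l\le i}|w_l - \tilde w_l|$ yields
$$u_i(w) - u_i(\tilde w) \le \E\bigl[\Delta u_{i+1}\bigr] + K(\delta + |u_i(w)-u_i(\tilde w)|)\Delta t^N_{i+1} + \mu\cdot\Delta z_{i+1}\,\Delta t^N_{i+1},$$
where $\Delta u_{i+1}$ denotes the difference of $u_{i+1}$ evaluated at the two paths sharing the common future increment. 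The key identity, a direct consequence of \eqref{Zformula},
$$\E\bigl[\Delta u_{i+1}\,(1 + \mu \cdot \Delta W^N_{t^N_{i+1}})\bigr] = \E[\Delta u_{i+1}] + \mu\cdot\Delta z_{i+1}\cdot\Delta t^N_{i+1},$$
lets me rewrite the left-hand side as $\E^\mu[\Delta u_{i+1}]$ via the change of measure \eqref{Pmu}, cancelling the $\mu\cdot\Delta z_{i+1}$ term. Since the pathwise bound $|\Delta u_{i+1}| \le \bar L_{i+1}\delta$ is invariant under the measure change, this and a symmetric lower bound using a subgradient at $z_{i+1}(\tilde w)$ produce the scalar recursion $\bar L_i(1 - K\Delta t^N_{i+1}) \le \bar L_{i+1} + K\Delta t^N_{i+1}$. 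Iterating as in Lemma \ref{lemmaGronwall} gives $\bar L_i \le 2(L+KT)\exp(KT)$ for $N \ge N_0$.

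The main obstacle is the apparent circularity: validity of $\p^\mu$ requires $|\mu\cdot\Delta W^N_{t^N_{i+1}}| < 1$, which via (f4') needs an a priori bound on $|z_{i+1}|$, yet that bound is the goal of the induction. It is resolved by running the two claims in lockstep: at the inductive step $i$, the already-established bound $\bar L_{i+1} \le 2(L+KT)\exp(KT)$ produces $|Z^N_{t^N_{i+1}}| \le 2\sqrt{d}(L+KT)\exp(KT) =: C$, so (f4') gives $|\mu| \le b = b(C)$, and then (W1) together with (W5) force $|\mu\cdot\Delta W^N_{t^N_{i+1}}| \le bD\sqrt{d}\sqrt{\Delta t^N_{i+1}} < 1$ for $N$ large enough. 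The measure change is thus admissible at every inductive step, and the induction closes.
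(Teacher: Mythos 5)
Your argument is correct and follows essentially the same route as the paper: a backward induction that establishes a path-Lipschitz bound of size $2(L+KT)e^{KT}$ on $Y^N$ via subgradient linearization of $f^N$ in $z$, the change of measure \eqref{Pmu} (made admissible by the inductively known bound on $Z^N$ together with (f4$'$), (W5) and (W1)), and the discrete Gronwall lemma, followed by the transfer to $Z^N$ through \eqref{Zformula}. The only cosmetic differences are that you linearize one time step at a time rather than invoking the full dual representation of Lemma \ref{lemmadualY} over the remaining horizon, and you obtain the $Z$-bound by Cauchy--Schwarz against unit vectors instead of componentwise; both versions produce the same constants.
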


\begin{proof}
Choose $N_0 \in \mathbb{N}$ so large that the statement of Lemma \ref{lemmadualY} holds for
$C = 2 \sqrt{d} (L+KT) \exp(KT)$ and the statement of Lemma \ref{lemmaGronwall} holds for $B = K$.
Assume $(Y^N,Z^N,M^N)$ is a solution of the $N$-th BS$\Delta$E for some $N \ge N_0$.
We prove \eqref{boundedZ} by backwards induction.
Fix $i \in \crl{1,\dots,i_N}$ and if $i \le i_N-1$, assume
$$
|Z^N_{t^N_j}| \le 2 \sqrt{d} (L+KT) \exp(KT) \quad \mbox{for all } j \ge i+1.
$$
There exist functions $\varphi^N : \mathbb{R}^{d \times i_N} \to \mathbb{R}$
such that $\varphi^N(W^N_{t^N_1},\ldots,W^N_T) = \varphi(\hat{W}^N)$ and
\be \label{phinlip}
|\varphi^N(w_1,\ldots,w_{i_N}) - \varphi^N(w'_1,\ldots,w'_{i_N})| \le L \sup_{i=1,\ldots,i_N} |w_i - w'_i|
\ee
for all $w_1, \ldots, w_{i_N}, w'_1, \ldots, w'_{i_N} \in \mathbb{R}^d$.
Choose $x_1, \dots, x_i \in \mathbb{R}^d$ such that
$$
\p[(W^N_{t^N_1}, \dots, W^N_{t^N_i}) = (x_1, \dots, x_i)] > 0
$$
and denote by $(Y^{N,x}_t, Z^{N,x}_t, M^{N,x}_t)_{t \ge t^N_i}$ the solution
$(Y^N_t,Z^N_t,M^N_t)$ conditioned on
$$
(W^N_{t^N_1}, \dots, W^N_{t^N_i}) = (x_1, \dots , x_i).
$$
It is adapted to the
filtration $(\tilde{\cal F}_t)_{t \ge t^n_i}$ generated by the $d$-dimensional Brownian motion
$$
\tilde{W}^N_t := W^N_t - W^N_{t^N_i}, \quad t \ge t^N_i,
$$
and solves the BS$\Delta$E
\bea
\label{x} Y^{N,x}_{t^N_j}&=&
Y^{N,x}_{t^N_{j+1}}+ f^N(t^N_{j+1},x_1,\dots, x_{i-1},x_i + \tilde{W}^N,Y^{N,x}_{t^N_j},
Z^{N,x}_{t^N_{j+1}}) \Delta  t^N_{j+1} \nonumber\\
&& - Z^{N,x}_{t^N_{j+1}}\Delta \tilde{W}^N_{t^N_{j+1}}-(M^{N,x}_{t^N_{j+1}}-M^{N,x}_{t^N_j})\\
Y^{N,x}_T &=& \xi^{N,x}
\label{y},
\eea
where
$$
\xi^{N,x} = \varphi^N(x_1, \dots, x_{i-1},x_i + \tilde{W}^N).
$$
Now let $x'_i \in \mathbb{R}^d$ such that
$$
\p[(W^N_{t^N_1}, \dots, W^N_{t^N_i}) = (x_1, \dots, x_{i-1}, x'_i)] > 0
$$
and denote $x' = (x_1, \dots ,x_{i-1}, x'_i)$. If $i = i_N$, one obtains
directly from \eqref{phinlip} that
$$
|Y^{N,x'}_T - Y^{N,x}_T|
= |\varphi^N(x_1, \dots, x'_{i_N}) - \varphi^N(x_1,\dots,x_{i_N})
\le L|x'_{i_N} - x_{i_N}|.
$$
If $i \le i_{N-1}$, note that $|\max  [a_1,a_2] -\max [b_1,b_2]| \le \max [|a_1-b_1|, |a_2-b_2|]$ for
all $a_1,a_2,b_1,b_2 \in \mathbb{R}$. Therefore, one obtains from
Lemma \ref{lemmadualY} for all $j \ge i$,
\beas
&& |Y^{N,x'}_{t^N_j}-Y^{N,x}_{t^N_j}|\\
&=& \Bigg| \max_{\mu \in \crl{\mu',\mu^*}}
\mathbb{E}^{\mu} \edg{\xi^{N,x'}-
\sum_{l=j+1}^{i_N} g^N(t^N_l,x_1, \dots, x_{i-1},x'_i+\tilde{W}^N,Y^{N,x'}_{t^N_{l-1}},\mu_{t^N_l})\Delta t^N_l
\bigg| \tilde{\cal F}^N_{t^N_j}}\\
&& -  \max_{\mu \in \crl{\mu',\mu^*}} \mathbb{E}^{\mu} \edg{\xi^{N,x}-
\sum_{l=j+1}^{i_N} g^N(t^N_l,x_1,\dots,x_{i-1},x_i+\tilde{W}^N,Y^{N,x}_{t^N_{l-1}},\mu_{t^N_l}) \Delta t^N_l
\bigg| \tilde{\cal F}^N_{t^N_j}}\Bigg|\\
&\le& \max_{\mu \in\{\mu',\mu^*\}}
\mathbb{E}^{\mu} \Bigg[|\xi^{N,x'} - \xi^{N,x}| +
\sum_{l=j+1}^{i_N} \big|g^N(t^N_l,x_1, \dots, x_{i-1},x'_i+ \tilde{W}^N, Y^{N,x'}_{t^N_{l-1}},\mu_{t^N_l})\\
&& - g^N(t^N_l,x_1,\dots, x_{i-1},x_i+\tilde{W}^N,Y^{N,x}_{t^N_{l-1}}, \mu_{t^N_l}) \big| \Delta  t^N_l \Big|
\tilde{F}^N_{t^N_j} \Bigg]\\
&\le& (L+KT) |x'_i-x_i| + K \sum_{l=j+1}^{i_N} \N{Y^{N,x'}_{t^N_{l-1}}-Y^{N,x}_{t^N_{l-1}}}_\infty \Delta  t^N_l.
\eeas
It follows from Lemma \ref{lemmaGronwall} that
\be \label{LipY}
\N{Y^{N,x'}_{t^N_i}-Y^{N,x}_{t^N_i}}_\infty \le 2(L+KT)\exp(KT) |x'-x|.
\ee
To see that this implies $|Z^N_{t^N_{i}}| \le 2 \sqrt{d}(L+KT) \exp(KT)$, note that
because $Y^N_{t^N_i}$ is $(\F^N_{t^N_t})$-measurable, there exist functions
$y^N_{t^N_i}:\mathbb{R}^{i \times d}\to \mathbb{R}$ such that
$$
Y^N_{t^N_{i}} = y^N_{t^N_{i}}(W^N_{t^N_1},\dots,W^N_{t^N_i}).
$$
So the components of $Z^N_{t^N_i}$ satisfy
\beas
&& |Z^{N,k}_{t^N_i}| = \frac{\Big|\E{Y^N_{t^N_i}\Delta W^{N,k}_{t^N_{i}}}
\big| {\cal F}^N_{t^N_{i-1}} \Big|}{\Delta  t^N_i}\\
&=& \frac{\bigg|\E{\Big(y^N_{t^N_{i}}(W^N_{t^N_{1}},\dots,W^N_{t^N_{i-1}},W^N_{t^N_i})
-y^N_{t^N_i}(W^N_{t^N_{1}},\dots,W^N_{t^N_{i-1}},W^N_{t^N_{i-1}}) \Big) \Delta W^{N,k}_{t^N_i}
\big| {\cal F}^N_{t^N_{i-1}}} \bigg|}{\Delta  t^N_i}\\
&\le& \frac{2(L + KT) \exp(KT)}{\Delta t^N_i}
\E{\big|\Delta W^{N,k}_{t^N_i} \big|^2}
= 2(L + KT) \exp(KT),
\eeas
which entails $|Z^N_{t^N_i}| \le 2 \sqrt{d}(L + KT) \exp(KT)$.
\end{proof}

\begin{lemma} \label{lemmaYbounded}
Assume $(Y,Z)$ is a solution of the BSDE \eqref{bsde} corresponding to a
bounded terminal condition such that $Z$ is bounded and $f$ satisfies
{\rm (f2)--(f4)}. Then $Y$ is bounded.
\end{lemma}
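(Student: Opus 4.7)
The plan is to linearize the BSDE using the Lipschitz property (f3) in $y$, exploit the boundedness of $Z$ via (f4) to control the driver's dependence on $z$, and then solve the resulting linear BSDE explicitly by the standard exponential change of variable.

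First I would use (f4) with $a := \|Z\|_{\infty} < \infty$ to obtain a constant $b \in \mathbb{R}_+$ such that $|f(s,W,y,Z_s) - f(s,W,y,0)| \le b |Z_s|$ for all $s,y$. Combined with (f2), which gives a constant $M$ bounding $|f(s,W,0,0)|$, this yields
\be
|\beta(s)| := |f(s,W,0,Z_s)| \le |f(s,W,0,Z_s) - f(s,W,0,0)| + |f(s,W,0,0)| \le b\|Z\|_{\infty} + M.
\ee
Next I would define the bounded, predictable process
\be
\alpha(s) := \frac{f(s,W,Y_s,Z_s) - f(s,W,0,Z_s)}{Y_s}\mathbf{1}_{\{Y_s \neq 0\}},
\ee
which satisfies $|\alpha(s)| \le K$ by (f3). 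Then the BSDE \eqref{bsde} becomes the linear BSDE
\be
Y_t = \xi + \int_t^T \bigl(\alpha(s) Y_s + \beta(s)\bigr)\,ds - \int_t^T Z_s\, dW_s.
\ee

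Then I would apply the standard exponential transformation: setting $\Gamma_s := \exp\bigl(\int_0^s \alpha(u)\, du\bigr)$, one checks by It\^o's formula that $d(\Gamma_s Y_s) = -\Gamma_s \beta(s)\,ds + \Gamma_s Z_s\,dW_s$. Since $\alpha$ is bounded by $K$ and $Z$ is bounded, $\Gamma Z$ is bounded and the stochastic integral is a true martingale, so taking conditional expectations gives
\be
\Gamma_t Y_t = \mathbb{E}\Bigl[\Gamma_T \xi + \int_t^T \Gamma_s \beta(s)\,ds \,\Big|\, \mathcal{F}_t\Bigr].
\ee
Dividing by $\Gamma_t$ and using $|\Gamma_s/\Gamma_t| \le e^{KT}$ for $s \ge t$ together with the bounds on $\xi$ and $\beta$ yields
\be
|Y_t| \le e^{KT}\bigl(\|\xi\|_{\infty} + T(b\|Z\|_{\infty} + M)\bigr),
\ee
which is the desired uniform bound.

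There is no real obstacle here; the only points requiring care are the boundedness of $\Gamma Z$ (so that the stochastic integral is a true martingale and conditional expectations commute with the integral as needed) and the applicability of (f4), both of which are immediate from the hypothesis that $Z$ is bounded. The integrability condition $\int_0^T |f(s,W,Y_s,Z_s)|\, ds < \infty$ assumed in the definition of a solution ensures all manipulations on the BSDE itself are valid.
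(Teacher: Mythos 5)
Your proof is correct, but it takes a genuinely different route from the paper. The paper dominates the driver: using that $Z$ is bounded it may assume $f$ globally Lipschitz in $(y,z)$, bounds $f(t,w,y,z) \le f'(t,w,y,z) := a + b(|y|+|z|)$ via (f2), solves the BSDE with driver $f'$ and constant terminal condition $\N{\xi}_{\infty}$ explicitly (the solution is the deterministic function $\hat{Y}_t = (\N{\xi}_\infty + a/b)e^{b(T-t)} - a/b$ with $\hat{Z} \equiv 0$), and then invokes the comparison theorem of El Karoui et al.\ (1997) to get $Y \le \hat{Y}$, repeating the argument from below. You instead linearize the equation itself, writing the driver as $\alpha(s)Y_s + \beta(s)$ with $|\alpha| \le K$ by (f3) and $\beta$ bounded by (f2) and (f4) (the latter applicable precisely because $Z$ is bounded), and solve by the exponential integrating factor to obtain the representation $\Gamma_t Y_t = \E{\Gamma_T\xi + \int_t^T \Gamma_s\beta(s)\,ds \mid \F_t}$, from which the two-sided bound is immediate. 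Your version is more self-contained: it needs neither the Pardoux--Peng existence result for the auxiliary equation nor the comparison theorem, and it delivers the upper and lower bounds in one stroke; the paper's version outsources the work to standard cited results. The points you flag --- predictability and boundedness of $\alpha$, true-martingale property of $\int \Gamma_s Z_s\,dW_s$ --- are indeed the only things to check, and they all hold under the stated hypotheses.
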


\begin{proof}
Since $Z$ is bounded, one can assume without loss
of generality that the driver $f$ is Lipschitz in $y$ and $z$ with
Lipschitz-constant $b \in \mathbb{R}_+$. By condition (f2), there exists a constant
$a \in \mathbb{R}_+$ such that $f(t,w,0,0) \le a$ for all $t$ and $w$. Therefore,
$$
f(t,w,y,z) \le f'(t,w,y,z) := a + b(|y| + |z|).
$$
Since $f'$ is Lipschitz in $y$ and $z$, it follows from Pardoux and Peng (1990) that
the BSDE with driver $f'$ and terminal condition $\hat{\xi} := \N{\xi}_{\infty}$ has
a unique solution $(\hat{Y},\hat{Z})$, which is easily verified to be
$$
\hat{Y}_t = \brak{\hat{\xi} + \frac{a}{b}} e^{b(T-t)} - \frac{a}{b}, \quad
\hat{Z}_t = 0,
$$
and it follows from the comparison result shown in
El Karoui et al. (1997) that $Y_t \le \hat{Y}_t$ for all $t$.
Similarly, one obtains that $Y$ is bounded from below.
\end{proof}

\noindent
{\bf Proof of Theorem \ref{thm1}.}\\
By assumption, there exists a constant $L \in \mathbb{R}_+$ such that
$|\varphi(w_1)-\varphi(w_2)| \le L \N{w_1-w_2}_{\infty}$ for all $w_1,w_2 \in C^d[0,T]$.
It follows from Proposition \ref{propex} and Lemma \ref{lemmaZbound}
that there exists an $N_0 \in \mathbb{N}$ such that for every $N \ge N_0$, the $N$-th BS$\Delta$E has
a unique solution $(Y^N,Z^N,M^N)$ and
$$
\sup_{0 \le t \le T} |Z^N_t| \le 2 \sqrt{d}(L + KT) \exp(KT).
$$
One can choose a function
$\tilde{f} : [0,T] \times C^d[0,T] \times \mathbb{R} \times \mathbb{R}^d \to \mathbb{R}$
that agrees with $f$ for $|z| \le 2 \sqrt{d}(L + KT) \exp(KT)$,
satisfies (f1)--(f3) and is Lipschitz-continuous in $z$.
From Pardoux and Peng (1990) one obtains that the BSDE \eqref{bsde} with driver
$\tilde{f}$ has a unique solution $(Y,Z)$, and it is a consequence of Theorem 12 of Briand et al. (2002) that
$$
\sup_t\Big(|Y^{N}_t- Y_t|+ |\int_0^t Z^N_s dW^N_s-\int_0^t Z_s dW_s|+|M^{N}_t|\Big)
\to 0 \quad \mbox{in } L^2,
$$
and
\be
\label{convz4}
\sup_t \brak{\sum_{k=1}^d \abs{\int_0^t Z^{N,k}_s d \ang{W^N}_s - \int_0^t Z^k_s ds}^2
+ \abs{\int_0^t |Z^N_s|^2 d \ang{W^N}_s - \int_0^t |Z_s|^2 ds}}
\to 0 \quad \mbox{in } L^1.
\ee
(Briand et al. (2002) prove this result for the case where
the Brownian motion $W$ is one-dimensional and drivers are RCLL.
But we show in the Appendix that it also holds in our setup.)
It follows from \eqref{convz4} that
$$
|Z_t| \le 2 \sqrt{d}(L + KT) \exp(KT) \quad dt \times d\p \mbox{-almost everywhere.}
$$
So $(Y,Z)$ is also a solution of the BSDE \eqref{bsde} with driver $f$.

If one replaces $f$ by a driver $f' \ge f$ satisfying (f1)--(f4)
and $\xi$ by a terminal condition $\xi' \ge \xi$ of the form $\xi' = \varphi'(W)$
for a Lipschitz-continuous function $\varphi' : C^d[0,T] \to \mathbb{R}$, the
BSDE \eqref{bsde} has a solution $(Y',Z')$ such that $Z'$
is bounded by a constant $C \in \mathbb{R}_+$. So one can modify $f$ and $f'$ for
$$
|z| > C \vee 2 \sqrt{d}(L + KT) \exp(KT)
$$
such that they satisfy (f1)--(f3) and are Lipschitz-continuous in $z$.
But then it follows from the comparison result proved in El Karoui et al. (1997) that
$Y'_t \ge Y_t$ for all $t$. In particular, $(Y,Z)$ is the only solution of \eqref{bsde}
such that $Z$ is bounded. Finally, if $\varphi$ is bounded, one obtains from Lemma
\ref{lemmaYbounded} that $Y$ is bounded as well.
\unskip\nobreak\hfill$\Box$\par\addvspace{\medskipamount}

\setcounter{equation}{0}
\section{Convex duality and comparison}

As in the discrete-time case we exploit the convexity of $f$ to derive
convex dual representations for solutions of BSDEs (see Lemma \ref{lemmaYcondexp} below).
If $f$ does not depend on $y$, the representation is explicit and  coincides with the
ones in Barrieu and El Karoui (2009) or Delbaen et al. (2009). But if $f$ depends on $y$, it is
implicit as in the discrete-time case.

Denote the set of all $d$-dimensional BMO processes $\mu$ by BMO.
The norm $\N{\mu}_{\rm BMO}$ is the smallest number $c$ such that
$$
\sqrt{\E{\int_{\tau}^T |\mu_s|^2 ds| \F_\tau}} \le c
$$
for all stopping times $\tau$ taking values in $[0,T]$. It is well-known
from Kazamaki (1994) that for every $\mu \in {\rm BMO}$,
$$
\Gamma^{\mu}_t = \exp \brak{\int_0^t \mu_s dW_s - \frac{1}{2} \int_0^t |\mu_s|^2 ds},
\quad 0 \le t \le T,
$$
is a martingale. By Girsanov's theorem, $\p^{\mu} = \Gamma^{\mu}_T \cdot \p$
defines a probability measure equivalent to $\p$
under which $W^{\mu}_t = W_t - \int_0^t \mu_s ds$ is a $d$-dimensional Brownian motion.
Moreover, every BMO process with respect
to $\p$ is also a BMO process with respect to $\p^{\mu}$.

Before we can turn to convex dual representations, we need the following
technical

\begin{lemma} \label{lemmasemi}
Let $Y^n$, $n \in \mathbb{N}$, be a sequence of $({\cal F}_t)$-semimartingales
with canonical decompositions
$$
Y^n_t = Y^n_0 + U^n_t + V^n_t.
$$
Assume the $Y^n$ are uniformly bounded by a constant $C \in \mathbb{R}_+$ and
there exists $b \in \mathbb{R}_+$ such that for all
$n \in \mathbb{N}$, $V^n_t + bt$ is increasing. Then there exist
{\rm BMO} processes $Z^n$ such that $U^n_t = \int_0^t Z^n_s dW_s$ and
\be \label{Aest}
\mathbb{E} \edg{\int_{\tau}^T |Z^n_s|^2ds \mid {\cal F}_{\tau}}
+ \E{\int_0^T |dV^n_s|} \le 4 e^{2C+2|b|T}+|b|T
\ee
for all stopping times $\tau$ and $n \in \mathbb{N}$.
In particular, $\sup_n \N{Z^n}_{\rm BMO} < \infty$.
\end{lemma}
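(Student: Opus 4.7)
Since $(\F_t)$ is the Brownian filtration, the martingale representation theorem produces a predictable $\mathbb{R}^d$-valued $Z^n$ with $U^n_t=\int_0^t Z^n_s\,dW_s$ and $\int_0^T|Z^n_s|^2\,ds<\infty$ almost surely; the task is to upgrade this to the uniform BMO bound. The main tool is It\^o's formula for the convex function $\varphi(y)=e^y$ applied to the bounded semimartingale $Y^n$, after localization by stopping times $\sigma_k\uparrow T$ along which the stochastic integral is a genuine martingale and $\int_0^{\sigma_k}|Z^n_s|^2\,ds\in L^1$. Rearranging between an arbitrary stopping time $\tau\le T$ and $\sigma_k$, taking conditional expectation with respect to $\F_\tau$ to kill the stochastic integral, and letting $k\to\infty$ by monotone convergence yields
$$
\tfrac12\,\E{\int_\tau^T e^{Y^n_s}|Z^n_s|^2\,ds \mid \F_\tau}=\E{e^{Y^n_T}-e^{Y^n_\tau}-\int_\tau^T e^{Y^n_s}\,dV^n_s \mid \F_\tau}.
$$

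Since $|Y^n|\le C$ the boundary terms contribute at most $2 e^C$; the decomposition $dV^n_s = d(V^n_s+bs) - b\,ds$ with $d(V^n_s+bs)\ge 0$ gives $-e^{Y^n_s}\,dV^n_s \le b\,e^{Y^n_s}\,ds$, so the drift integral is bounded by $b e^C T$. Factoring $e^{-C}$ out of the left-hand side produces
$$
\E{\int_\tau^T |Z^n_s|^2\,ds \mid \F_\tau} \le 2 e^{2C}(1+bT) \le 4 e^{2C+2|b|T},
$$
the last step using $1+bT\le e^{2|b|T}$. In particular $\sup_n\N{Z^n}_{\rm BMO}<\infty$.

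For the total-variation estimate, $V^n_t=(V^n_t+bt)-bt$ writes $V^n$ as the difference of two increasing processes starting at $0$, giving $\int_0^T|dV^n_s|\le V^n_T+2bT$. The BMO bound just obtained makes $U^n$ a square-integrable martingale with $\E{U^n_T}=0$, so the identity $V^n_T=Y^n_T-Y^n_0-U^n_T$ together with $|Y^n_T-Y^n_0|\le 2C$ gives $\E{V^n_T}\le 2C$ and hence $\E{\int_0^T|dV^n_s|}\le 2C+2bT$. Adding this to the BMO bound and absorbing the excess constants into the exponential via $2C\le e^{2C}$ and $bT\le e^{2|b|T}-1$ yields the stated right-hand side $4e^{2C+2|b|T}+|b|T$.

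The principal technical obstacle is the clean localization argument that justifies the It\^o identity and the conditional-expectation manipulation while $Z^n$ is only a priori locally in $L^2$; after that, the key insight is that the sign condition $d(V^n+bt)\ge 0$ turns the otherwise unsigned drift term into a controllable quantity, and the convexity of $e^y$ produces the desired $|Z^n|^2$-density in the quadratic-variation term.
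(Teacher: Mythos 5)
Your plan follows the same route as the paper: apply It\^o's formula to the exponential of the (drift-corrected) bounded semimartingale, localize, condition, and read the uniform BMO bound off the quadratic-variation term; the paper merely works with $\tilde Y^n_t=Y^n_t+bt$ so that its finite-variation part is increasing, which is equivalent to your estimate $-e^{Y^n_s}\,dV^n_s\le b\,e^{Y^n_s}\,ds$. There is, however, one genuine gap: your displayed ``identity'' is only correct when $Y^n$ is continuous. The lemma is stated for general semimartingales and is applied in the paper (in Lemma \ref{lemmaYcondexp} and Theorem \ref{thm3}) to supersolutions whose finite-variation part contains an RCLL increasing process $A$ that may jump. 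For such $V^n$, It\^o's formula carries the extra term $\sum_{\tau<s\le T}\bigl(\Delta e^{Y^n_s}-e^{Y^n_{s-}}\Delta Y^n_s\bigr)$, and the Stieltjes integral must be taken against $e^{Y^n_{s-}}$. The repair is exactly the convexity you invoke for a different purpose: since the martingale part is continuous, $\Delta Y^n_s=\Delta V^n_s$, and $e^{x+h}-e^x-e^xh\ge 0$ for every $h$, so the jump sum is nonnegative and can be discarded, turning your equality into the inequality you actually use. The paper keeps this term explicitly; your write-up must too, or the argument is simply false for discontinuous $V^n$.

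Two smaller points. First, the limit $k\to\infty$ cannot be taken directly inside $\E{\,\cdot\mid\F_{\tau\wedge\sigma_k}}$, since the conditioning $\sigma$-field moves with $k$; the paper first takes $\tau=0$ and $m\to\infty$ to conclude $\E{\ang{U^n}_T}<\infty$, hence (by Burkholder--Davis--Gundy) that $U^n$ is a true square-integrable martingale, and only then reruns the computation with $\sigma_m\equiv T$ for arbitrary $\tau$. You flag localization as the obstacle but do not resolve it in this order. Second, your constant bookkeeping does not yield \eqref{Aest} as stated: you spend the whole budget $4e^{2C+2|b|T}$ on the BMO term and then still add $\E{\int_0^T|dV^n_s|}\le 2C+2|b|T$, which cannot be absorbed (take $b=0$). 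The paper gets both quantities out of the single inequality $\mathbb{E}_{{\cal F}_\tau}\bigl[\ang{U^n}_T-\ang{U^n}_\tau+\tilde V^n_T-\tilde V^n_\tau\bigr]\le 2e^{2\tilde C}$ with $\tilde C=C+|b|T$, so each is at most $2e^{2\tilde C}$, their sum at most $4e^{2\tilde C}$, and the extra $|b|T$ comes only from $\int_0^T|dV^n_s|\le\tilde V^n_T+|b|T$. This last point is cosmetic for the applications, which only need some uniform bound, but as written your proof does not deliver the stated constant.
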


\begin{proof}
The canonical decomposition of the semimartingale $\tilde{Y}^n_t = Y^n_t + bt$ is
$$
\tilde{Y}^n_t = Y^n_0 + U^n_t + \tilde{V}^n_t,
$$
for the increasing finite variation process $\tilde{V}^n_t = V^n_t + bt$.
Since $(W_t)$ has the predictable representation property, there
exist $\mathbb{R}^d$-valued $({\cal F}_t)$-predictable processes $Z^n$ such that
$U^n_t = \int_0^t Z^n_s dW_s$. In particular, $U^n_t$ is continuous. Hence,
$\Delta \tilde{Y}^n_t = \Delta \tilde{V}^n_t \ge 0$ for all $t$. For fixed
$n \in \mathbb{N}$, let $\sigma_m$, $m \in \mathbb{N}$, be an increasing sequence
of $[0,T]$-valued stopping times such that $\p[\sigma_m = T] \uparrow 1$ and
$U^n_{t \wedge \sigma_m}$ is a martingale for every $m$. It follows from
It\^o's formula that for every $[0,T]$-valued stopping time $\tau$,
\beas
\exp(\tilde{Y}^n_{\sigma_m})
&=& \exp(\tilde{Y}^n_{\tau \wedge\sigma_m}) + \int_{\tau}^{T} \exp(\tilde{Y}^n_s) dU^n_{s \wedge \sigma_m}
+ \frac{1}{2} \int_{\tau}^{T} \exp(\tilde{Y}^n_s)d \ang{U^n}_{s \wedge \sigma_m}\\
&+& \int_{\tau+}^{T} \exp(\tilde{Y}^n_{s-}) d\tilde{V}^n_{s \wedge \sigma_m}
+ \sum_{\tau < s \le \sigma_m} \Delta \exp(\tilde{Y}^n_s) - \exp(\tilde{Y}^n_{s-}) \Delta \tilde{Y}^n_s.
\eeas
Since
$\sum_{\tau < s \le \sigma_m} \Delta \exp(\tilde{Y}^n_s) - \exp(\tilde{Y}^n_{s-}) \Delta \tilde{Y}^n_s \ge 0$,
one can take conditional expectation to obtain
$$
\mathbb{E}_{{\cal F}_{\tau \wedge \sigma_m}} \edg{
\exp(\tilde{Y}^n_{\sigma_m})} \ge
\mathbb{E}_{{\cal F}_{\tau \wedge \sigma_m}} \edg{
\int_{\tau}^T \frac{1}{2} \exp(\tilde{Y}^n_s) d \ang{U^n}_{s \wedge \sigma_m}
+ \int_{\tau+}^{T} \exp(\tilde{Y}^n_{s-}) d\tilde{V}^n_{s \wedge \sigma_m}}.
$$
But since $\ang{U^n}$ and $\tilde{V}^n$ are increasing and $\tilde{Y}^n$
is bounded by $\tilde{C} = C + |b|T$, one obtains
$$
\exp(\tilde{C}) \ge
\frac{1}{2} \exp(-\tilde{C}) \mathbb{E}_{{\cal F}_{\tau \wedge \sigma_m}} \edg{
\ang{U^n}_{\sigma_m} -\ang{U^n}_{\tau \wedge \sigma_m}
+ \tilde{V}^n_{\sigma_m} -\tilde{V}^n_{\tau\wedge \sigma_m}},
$$
and therefore,
\be \label{mittau}
\mathbb{E}_{{\cal F}_{\tau \wedge \sigma_m}} \edg{
\ang{U^n}_{\sigma_m}- \ang{U^n}_{\tau \wedge \sigma_m}
+ \tilde{V}^n_{\sigma_m}- \tilde{V}^n_{\tau\wedge \sigma_m}} \le 2e^{2\tilde{C}}.
\ee
By choosing $\tau =0$ and letting $m$ converge to infinity, one obtains from Beppo Levi's
monotone convergence theorem that
$$
\E{\ang{U^n}_T} \le 2e^{2 \tilde{C}},
$$
which, by the Burkholder--Davis--Gundy inequality, implies that $U$ is a square-integrable martingale.
So one may choose $\sigma_m = T$, and it follows from
\eqref{mittau} that
\be \label{MandA}
\mathbb{E}_{{\cal F}_{\tau}} \edg{
\ang{U^n}_T - \ang{U^n}_{\tau} + \tilde{V}^n_T - \tilde{V}^n_{\tau}} \le 2e^{2\tilde{C}}.
\ee
Using $\ang{U^n}_T - \ang{U^n}_{\tau} = \int_{\tau}^T |Z^n_s|^2ds$
and the fact that $\tilde{V}$ is increasing, one obtains
$$
\mathbb{E}_{{\cal F}_{\tau}} \edg{\int_{\tau}^T |Z^n_s|^2ds} +
\E{\int_0^T |d \tilde{V}^n_s|} \le 4 e^{2 \tilde{C}},
$$
which implies \eqref{Aest}.
\end{proof}

\begin{Remark} \label{Adecreasing}
By replacing $Y$ with $-Y$, one sees that Lemma \ref{lemmasemi} also holds if there
exist constants $C$ and $b$ such that for every $n \in \mathbb{N}$,
$Y^n$ is bounded by $C$ and the process $A^n_t + bt$ is decreasing.
\end{Remark}

Let us denote by $g$ the convex conjugate of $f$ with respect to $z$, that is,
$$
g(t,w,y,\mu) = \sup_z \crl{z \mu - f(t,w,y,z)}, \quad \mu \in \mathbb{R}^d.
$$
$g$ maps $[0,T] \times C^d[0,T] \times \mathbb{R} \times \mathbb{R}^d$ to
$\mathbb{R} \cup \crl{\infty}$ and inherits condition (f3) from $f$, that is,
$$
|g(t,w_{1},y_1,\mu) - g(t,w_{2},y_2,\mu)| \le K \big(\sup_{0 \le s \le t}
|w_1(s) -w_2(s)|+|y_1-y_2|\big)
$$
for all $t, w_1, w_2, y_1, y_2, \mu$.

\begin{lemma} \label{lemmaYcondexp}
Suppose that $(Y,Z,A)$ is a supersolution of the BSDE
\eqref{bsde} such that $Y$ is bounded. If $f$ satisfies {\rm (f5)} or $Z$ is BMO, then
\be \label{dualinequ}
Y_{\sigma} \ge \mathbb{E}^{\mu}_{{\cal F}_{\sigma}} \edg{Y_\tau-\int_\sigma^\tau g(s,W,Y_s,\mu_s)ds}
\ee
for every $\mu \in {\rm BMO}$ and all stopping times
$0 \le \sigma \le \tau \le T$. If $(Y,Z)$ is a solution of the BSDE such that $Z$ is bounded
and $f$ satisfies {\rm (f1), (f4)} and {\rm (f5)},
there exists a bounded $\mathbb{R}^d$-valued $({\cal F}_t)$-predictable process $\mu^*$ such that
\be \label{dualequ}
Y_{\sigma} = \mathbb{E}^{\mu^*}_{{\cal F}_{\sigma}} \edg{Y_\tau-\int_\sigma^\tau
g(s,W,Y_s,\mu^*_s)ds}
\ee
for all stopping times $0 \le \sigma \le \tau \le T$.
\end{lemma}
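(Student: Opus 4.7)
The plan is to obtain both \eqref{dualinequ} and \eqref{dualequ} by applying Fenchel's inequality $f(s,W,Y_s,Z_s) \ge Z_s\mu_s - g(s,W,Y_s,\mu_s)$ to the supersolution/solution equation on $[\sigma,\tau]$ and then taking expectation under the Girsanov measure $\p^\mu = \Gamma^\mu_T\cdot\p$. Rewriting $\int_\sigma^\tau Z_s\,dW_s - \int_\sigma^\tau Z_s\mu_s\,ds$ as the $\p^\mu$-stochastic integral $\int_\sigma^\tau Z_s\,dW^\mu_s$ turns the Brownian integral into a $\p^\mu$-martingale as soon as $Z$ is BMO, because BMO is preserved under the change of measure from $\p$ to $\p^\mu$ as recalled before the lemma.

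The preparatory step is to verify that $Z$ is BMO under either hypothesis. If BMO is assumed, there is nothing to do. Under (f5) and boundedness of $Y$, (f5) yields a constant $C_0 \ge 0$ with $f(s,W,Y_s,z) \ge -C_0$ for all $(s,\omega,z)$. Writing the canonical decomposition $Y_t = Y_0 + \int_0^t Z_s\,dW_s + V_t$ with $V_t = -\int_0^t f(s,W,Y_s,Z_s)\,ds - A_t$, the process $V_t - C_0 t = -\int_0^t (f+C_0)\,ds - A_t$ is decreasing. Replacing $Y$ by the still bounded process $Y_t - C_0 t$ (which has the same $Z$) and applying Lemma \ref{lemmasemi} via Remark \ref{Adecreasing} then gives $Z \in {\rm BMO}$.

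With $Z \in {\rm BMO}$ established, Fenchel's inequality applied to the supersolution equation gives
\begin{equation*}
Y_\sigma \ge Y_\tau - \int_\sigma^\tau g(s,W,Y_s,\mu_s)\,ds - \int_\sigma^\tau Z_s\,dW^\mu_s + A_\tau - A_\sigma.
\end{equation*}
Conditioning on $\F_\sigma$ under $\p^\mu$ kills the stochastic integral, and the nonnegative term $A_\tau - A_\sigma$ can be dropped, which delivers \eqref{dualinequ}. Integrability on the right causes no problem: taking $z=0$ in the definition of $g$ yields $g(s,W,Y_s,\mu_s) \ge -f(s,W,Y_s,0)$, which is uniformly bounded below by (f2)--(f3) and boundedness of $Y$, so the integral of $g$ is well-defined in $(-\infty,\infty]$.

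For \eqref{dualequ}, $(Y,Z)$ is a solution (so $A\equiv 0$) with $Z$ uniformly bounded by some $C$. By convexity (f1) the subdifferential $\partial_z f(s,W,Y_s,Z_s)$ is a non-empty compact convex set in $\mathbb{R}^d$, and by (f4) with $a=C$ every element of this set has norm at most some $b$. A measurable selection produces a predictable process $\mu^*$ with $|\mu^*| \le b$ realising Fenchel equality $Z_s\mu^*_s = f(s,W,Y_s,Z_s) + g(s,W,Y_s,\mu^*_s)$. Boundedness of $\mu^*$ makes $\p^{\mu^*}$ a genuine equivalent probability measure and $\int Z\,dW^{\mu^*}$ a true $\p^{\mu^*}$-martingale, so the calculation above becomes an equality throughout and yields \eqref{dualequ}. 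The most delicate steps will be the BMO bound on $Z$ under (f5) via Lemma \ref{lemmasemi}, and the predictable measurable selection producing $\mu^*$.
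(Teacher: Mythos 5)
Your proposal is correct and follows essentially the same route as the paper's proof: it derives the BMO property of $Z$ from (f5) and the boundedness of $Y$ via Lemma \ref{lemmasemi} and Remark \ref{Adecreasing}, obtains \eqref{dualinequ} from the Fenchel inequality combined with the Girsanov change of measure $\p^{\mu}$, and obtains \eqref{dualequ} by a bounded predictable measurable selection from the subdifferential $\partial_z f(t,W,Y_t,Z_t)$, bounded by (f4) since $Z$ is bounded (the paper cites Lemma 6.2 of Cheridito and Stadje (2009) for exactly this selection). The only additions beyond the paper's argument are the harmless remark on the lower bound of $g$ and the cosmetic shift $Y_t - C_0 t$ before invoking Remark \ref{Adecreasing}.
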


\begin{proof}
If $Y$ is bounded and $f$ satisfies condition (f5), one obtains from Lemma
\ref{lemmasemi} and Remark \ref{Adecreasing} applied to $Y^n_t
= Y_t$, $U^n_t = \int_0^t Z_s dW_s$ and $V^n_t = - \int_0^t f(s,W,Y_s,Z_s)ds - A_t$
that $Z$ is BMO. But then it is also BMO with respect to $\p^{\mu}$ for every
$\mu \in {\rm BMO}$; see Section 3.3 of Kazamaki (1994).
It follows that
\bea \label{esssup}
Y_{\sigma} &=& \mathbb{E}^{\mu}_{{\cal F}_{\sigma}} \edg{Y_\tau+\int_\sigma^\tau
f(s,W,Y_s,{Z}_{s})ds - \int_\sigma^\tau {Z}_s dW_s +A_\tau-A_\sigma}\nonumber\\
&\ge& \mathbb{E}^{\mu}_{{\cal F}_{\sigma}} \edg{Y_\tau-\int_\sigma^\tau
\big[\mu_s {Z}_s-f(s,W,Y_s,Z_s)\big] ds
- \int_\sigma^\tau Z_s (dW_s- \mu_s ds)}\label{gemu2} \\
&=& \mathbb{E}^{\mu}_{{\cal F}_{\sigma}} \edg{Y_{\tau}-\int_\sigma^\tau
\big[\mu_s {Z}_s-f(s,W,Y_s,{Z}_{s})\big]ds}\nonumber\\
\label{gemu} &\ge& \mathbb{E}^{\mu}_{{\cal F}_{\sigma}}
\edg{Y_{\tau}-\int_\sigma^\tau g(s,W,Y_s,\mu_{s})ds}.
\eea
Of course, \eqref{gemu2} becomes an equality
if $Y$ is not only a supersolution but a true solution.
Furthermore, if $f$ satisfies (f1), it follows from Lemma 6.2 in Cheridito and
Stadje (2009) that there exists an $\mathbb{R}^d$-valued
$({\cal F}_t)$-predictable process $(\mu^*_t)$ such that
$\mu^*_t$ is in the subgradient $\partial f(t,W,Y_t,Z_t)$ of
$f$ with respect to $z$ for $dt \times d \p$-almost all
$(t,\omega)$. If $Z_t$ is bounded, it follows from (f4) that
$\mu^*$ is bounded too. So $\mathbb{P}^{\mu^*}$ is a
well-defined probability measure, and inequality \eqref{gemu}
becomes an equality for $\mu = \mu^*$.
\end{proof}

\begin{definition}
We say a supersolution $(Y,Z,A)$ of the BSDE \eqref{bsde}
satisfies assumption {\rm (A)} if for every constant
$\varepsilon > 0$, there exists a $\mu \in {\rm BMO}$ such that
\be \label{epscond}
Y_t \le \mathbb{E}^{\mu}_{{\cal F}_t} \edg{\xi-\int_t^T
g(s,W,Y_s,\mu_s)ds} + \varepsilon \quad \mbox{for all } 0 \le t \le T.
\ee
\end{definition}

Note that if $(Y,Z,A)$ is a supersolution of the BSDE
\eqref{bsde} satisfying assumption (A) such that $Y$ is
bounded, then \beas Y_t \le \esssup_{\mu \in {\rm BMO}}
\mathbb{E}^{\mu}_{{\cal F}_t} \edg{ \xi - \int_t^T g(s,W,Y_s,\mu_s)ds}
\quad \mbox{for all } 0 \le t \le T.
\eeas

The following proposition gives a comparison result:

\begin{proposition} \label{increasing}
Assume $f$ is increasing in $y$ and $(Y,Z,A)$ is a supersolution of the BSDE \eqref{bsde}
such that $Y$ is bounded and fulfils assumption {\rm (A)}. Then
if $(Y',Z',A')$ is a supersolution of \eqref{bsde} with bounded
terminal condition $\xi' \ge \xi$ and driver $f' \ge f$ satisfying {\rm (f5)}
such that $Y'$ is bounded, one has $Y'_t \ge Y_t$ for all $0 \le t \le T$.
\end{proposition}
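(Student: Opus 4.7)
The strategy is to test both supersolutions against the \emph{same} change-of-measure process $\mu$ supplied by assumption {\rm (A)} for $(Y,Z,A)$, while Lemma \ref{lemmaYcondexp} furnishes a matching one-sided dual inequality for $(Y',Z',A')$; the difference $Y - Y'$ then satisfies an integral inequality that is closed by Gronwall. Fix $\varepsilon > 0$ and pick $\mu \in {\rm BMO}$ with
$$
Y_t \le \mathbb{E}^{\mu}_{{\cal F}_t}\left[\xi - \int_t^T g(s,W,Y_s,\mu_s)\,ds\right] + \varepsilon \quad \mbox{for all } t \in [0,T].
$$
Since $f'$ satisfies {\rm (f5)} and $Y'$ is bounded, Lemma \ref{lemmaYcondexp} applied to $(Y',Z',A')$ with driver $f'$ and convex conjugate $g'$, taking $\sigma = t$, $\tau = T$, yields
$$
Y'_t \ge \mathbb{E}^{\mu}_{{\cal F}_t}\left[\xi' - \int_t^T g'(s,W,Y'_s,\mu_s)\,ds\right].
$$

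Next I would exploit the order-reversing property of the Legendre transform: $f' \ge f$ implies $g' \le g$ pointwise, which combined with $\xi' \ge \xi$ weakens the previous line to
$$
Y'_t \ge \mathbb{E}^{\mu}_{{\cal F}_t}\left[\xi - \int_t^T g(s,W,Y'_s,\mu_s)\,ds\right].
$$
Subtracting from the bound on $Y_t$, and using that $f$ is increasing in $y$ (hence $g$ is decreasing in $y$) together with the fact that $g$ inherits the Lipschitz constant $K$ in $y$ from {\rm (f3)}, gives the pointwise estimate $g(s,W,Y'_s,\mu_s) - g(s,W,Y_s,\mu_s) \le K (Y_s - Y'_s)_+$, and therefore
$$
(Y_t - Y'_t)_+ \le K\, \mathbb{E}^{\mu}_{{\cal F}_t}\left[\int_t^T (Y_s - Y'_s)_+\,ds\right] + \varepsilon.
$$

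To close, set $\phi(t) := \esssup_{\omega}(Y_t - Y'_t)_+$, which is finite because $Y$ and $Y'$ are bounded; since conditional expectation is contractive on $L^\infty$ and $\mathbb{P}^{\mu} \sim \mathbb{P}$ share null sets, the previous display yields the deterministic inequality $\phi(t) \le K\int_t^T \phi(s)\,ds + \varepsilon$, whence Gronwall gives $\phi(t) \le \varepsilon\, e^{K(T-t)}$. Letting $\varepsilon \downarrow 0$ proves $Y'_t \ge Y_t$ for all $t$. The main delicate step I anticipate is the passage from the random conditional-expectation inequality to the deterministic Gronwall inequality for $\phi$: this rests on the equivalence of $\mathbb{P}^{\mu}$ and $\mathbb{P}$ (so that $L^\infty$-bounds are preserved under change of measure) and on the Lipschitz/monotonicity bound for $g$ that legitimately replaces the signed difference by $(Y_s - Y'_s)_+$; everything else is a straightforward combination of {\rm (A)}, Lemma \ref{lemmaYcondexp}, and order-reversal of the Legendre transform.
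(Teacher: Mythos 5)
Your proposal is correct and follows essentially the same route as the paper: assumption (A) for $Y$, the dual inequality of Lemma \ref{lemmaYcondexp} for $Y'$ under the same $\mu$, order-reversal $g'\le g$, the bound $\big(g(t,w,y_1,z)-g(t,w,y_2,z)\big)^+\le K(y_2-y_1)^+$ from monotonicity plus (f3), and Gronwall. The only (immaterial) difference is the last step: the paper closes the Gronwall loop for $t\mapsto\mathbb{E}^{\mu}\edg{(Y_t-Y'_t)^+}$ rather than for the essential supremum, which sidesteps any measurability quibble about $\phi$, but your variant is the same device the paper itself uses in Lemma \ref{lemmalipschitz}.
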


\begin{proof}
Fix $\varepsilon>0$. There exists a BMO
process $\mu$ such that for all $t \in [0,T]$,
$$
Y_t \le \mathbb{E}^{\mu}_{{\cal F}_t} \edg{\xi-\int_t^T g(s,W,Y_s,\mu_s)ds} +\varepsilon
\quad \mbox{for all } 0 \le t \le T.
$$
Define
$$
g'(t,w,y,\mu)= \sup_{z \in \mathbb{R}^d} \crl{\mu z-f'(t,w,y,z)}, \quad \mu \in \mathbb{R}^d.$$
Since $f' \ge f$, one has $g' \le g$, and therefore,
$$
Y'_t - \mathbb{E}^{\mu}_{{\cal F}_t} \edg{\xi'-\int_t^T g(s,W,Y'_s,\mu_s) ds}
\ge Y'_t- \mathbb{E}^{\mu}_{{\cal F}_t} \edg{\xi'-\int_t^T g'(s,W,Y'_s,\mu_s)ds}
\ge 0,
$$
where the last inequality follows from Lemma \ref{lemmaYcondexp}.
Since $f$ is increasing in $y$, $g$ is decreasing in $y$. So
$g(t,w,y_1,z) - g(t,w,y_2,z) \le 0$ for all $y_1 \ge y_2$. On the other hand,
if $y_1 \le y_2$, one has
$$
0 \le g(t,w,y_1,z)-g(t,w,y_2,z) \le K (y_2 - y_2).
$$
Hence,
\be
\label{pluslipschitz}
\big(g(t,w,y_1,z)-g(t,w,y_2,z)\big)^+\leq K(y_2-y_1)^+ \quad \mbox{for all } y_1,y_2 \in \mathbb{R}.
\ee
It follows that
\beas
&& (Y_t - Y'_t)^+\\
&\le& \brak{\varepsilon +
\mathbb{E}^{\mu}_{{\cal F}_t} \edg{\xi - \int_t^T g(s,W,Y_s,\mu_s)ds}
- \mathbb{E}^{\mu}_{{\cal F}_t} \edg{\xi' -\int_t^T g(s,W,Y'_s,\mu_s)ds}}^+\\
&\le& \varepsilon + \mathbb{E}^{\mu}_{{\cal F}_t} \edg{\int_t^T \brak{g(s,W,Y'_s,\mu_s)
- g(s,W,Y_s,\mu_s)}^+ ds}\\
&\le& \varepsilon + \mathbb{E}^{\mu}_{{\cal F}_t} \edg{\int_t^T K (Y_s - Y'_s)^+ ds}.
\eeas
In particular,
$$
\mathbb{E}^{\mu} \edg{(Y_t - Y'_t)^+} \le \varepsilon +
K \int_t^T \mathbb{E}^{\mu} \edg{(Y_s - Y'_s)^+} ds \quad
\mbox{for all } t,
$$
and one obtains from Gronwall's Lemma that
$$
\mathbb{E}^{\mu} \edg{(Y_t - Y'_t)^+} \le \varepsilon \exp\{K(T-t)\}, \quad 0 \le t \le T.
$$
Since $\varepsilon >0$ can be chosen arbitrarily, one gets $Y_t \le Y'_t$ for all $t$.
\end{proof}

The following proposition gives a comparison result for the case when
$f$ is decreasing in $y$:

\begin{proposition} \label{decreasing}
Assume $f$ is decreasing in $y$ and $(Y,Z,A)$ is a supersolution of the
BSDE \eqref{bsde} such that $Y$ is bounded and satisfies assumption
{\rm (A)}. If $(Y',Z',A')$ is a
supersolution of \eqref{bsde} with bounded terminal condition
$\xi' \ge \xi$ and driver $f' \ge f$ satisfying {\rm (f5)} such that
$Y'$ is bounded, then $Y'_t \ge Y_t$ for all $0 \le t \le T$.
\end{proposition}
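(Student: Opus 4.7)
The plan is to mirror the proof of Proposition \ref{increasing} as far as possible. Fix $\varepsilon > 0$ and use assumption (A) to choose $\mu \in {\rm BMO}$ with $Y_t \le \mathbb{E}^\mu_{\mathcal{F}_t}[\xi - \int_t^T g(s,W,Y_s,\mu_s)\,ds] + \varepsilon$. Since $Y'$ is bounded and $f'$ satisfies (f5), Lemma \ref{lemmasemi} shows $Z'$ is BMO. Applying Lemma \ref{lemmaYcondexp} to the supersolution $(Y',Z',A')$ with the same $\mu$ and driver $f'$, and using $\xi'\ge\xi$ together with $g'\le g$ (from $f'\ge f$), yields $Y'_t \ge \mathbb{E}^\mu_{\mathcal{F}_t}[\xi - \int_t^T g(s,W,Y'_s,\mu_s)\,ds]$. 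Subtracting gives
\[
Y_t - Y'_t \le \varepsilon + \mathbb{E}^\mu_{\mathcal{F}_t}\!\left[\int_t^T \bigl(g(s,W,Y'_s,\mu_s) - g(s,W,Y_s,\mu_s)\bigr)\,ds\right].
\]

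Since $f$ is decreasing in $y$, the convex conjugate $g$ is \emph{increasing} in $y$, and the one-sided Lipschitz estimate analogous to \eqref{pluslipschitz} becomes $(g(t,w,y_1,z) - g(t,w,y_2,z))^+ \le K(y_1 - y_2)^+$. Taking positive parts and applying this with $y_1 = Y'_s$, $y_2 = Y_s$ yields
\[
(Y_t - Y'_t)^+ \le \varepsilon + K\,\mathbb{E}^\mu_{\mathcal{F}_t}\!\left[\int_t^T (Y'_s - Y_s)^+\,ds\right].
\]
This is where the main obstacle lies: unlike the increasing case, the right-hand side involves $(Y'-Y)^+$ rather than $(Y-Y')^+$, so a direct Gronwall argument does not close.

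To overcome this, I would switch from pure duality to a linearization. Decompose $f(Y,Z) - f'(Y',Z') \le \alpha(Y-Y') + \beta\cdot (Z-Z')$ with $\alpha \in [-K,0]$ (from Lipschitz continuity and the decreasing $y$-monotonicity of $f$) and $\beta$ a BMO process (obtained from a subgradient of $f$ in $z$, well-defined because $Z$ and $Z'$ are BMO and $f$ is convex and locally Lipschitz in $z$). Applying Girsanov with $\beta$ removes the $\delta Z$-term, and since $\alpha \le 0$ the exponential weight $e^{\int \alpha}\in(0,1]$, so the backward Gronwall acts in the stable direction. Taking $\mathbb{E}^\beta_{\mathcal{F}_t}$ and discarding the non-positive contributions from $\xi-\xi'\le 0$, $-(A'_T - A'_t) \le 0$, and $(f-f')(Y',Z')\le 0$ yields the key estimate $(Y_t - Y'_t)^+ \le \mathbb{E}^\beta_{\mathcal{F}_t}[A_T - A_t]$. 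Now assumption (A) implies $\mathbb{E}^\mu[A_T - A_t] \le \varepsilon$, since the gap between (A)'s upper bound and Lemma \ref{lemmaYcondexp}'s lower bound equals $\mathbb{E}^\mu[A_T - A_t]$ plus the non-negative convex-conjugate slack. The most delicate step is to transfer this $\varepsilon$-smallness from $\mathbb{P}^\mu$ to $\mathbb{P}^\beta$: I expect this to succeed because (A) simultaneously forces $\mu$ to lie (in an averaged sense) in the subgradient of $f$ in $z$ along $(Y,Z)$, so $\mu$ and $\beta$ can be chosen close and the corresponding Girsanov densities comparable, giving $\mathbb{E}^\beta[A_T - A_t] \le C\varepsilon$. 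Letting $\varepsilon \to 0$ then delivers $Y_t \le Y'_t$ almost surely, as required.
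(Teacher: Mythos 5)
Your diagnosis of the obstacle is right --- the Gronwall argument from Proposition \ref{increasing} does not close when $g$ is increasing in $y$ --- but the linearization you propose to replace it with has two genuine gaps. First, the process $\beta$ must be a subgradient of $f$ in $z$ evaluated along $Z$ (or $Z'$), and these processes are only BMO, not bounded. Condition (f4) gives a Lipschitz constant $b(a)$ only on the ball $|z|\le a$, with no control on how $b(a)$ grows; the whole point of this paper is that $f$ may be superquadratic in $z$. So for, say, $f(z)=e^{|z|^2}$ the subgradient along a BMO process $Z$ is of order $Z e^{|Z|^2}$, which is in general not BMO and does not even define a Girsanov change of measure. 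The linearization method is exactly what breaks down in this setting, which is why the paper works with the convex dual $g$ throughout. Second, even granting $\beta\in{\rm BMO}$, your final step --- transferring $\mathbb{E}^{\mu}_{{\cal F}_t}[A_T-A_t]\le\varepsilon$ into $\mathbb{E}^{\beta}_{{\cal F}_t}[A_T-A_t]\le C\varepsilon$ --- is asserted, not proved. Assumption (A) only says the duality gap under $\p^{\mu}$ is small; it gives no quantitative closeness of $\mu$ to a subgradient and no bound on the density ratio $d\p^{\beta}/d\p^{\mu}$, so there is no uniform constant $C$ making expectations of a nonnegative random variable comparable under the two measures.

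The paper's proof takes an entirely different, and much shorter, route that avoids both Gronwall and linearization: argue by contradiction. If $\p[Y'_t<Y_t]>0$, condition on that event and set $\tau:=\inf\{s>t:\ Y'_s\ge Y_s\}$, which satisfies $\tau\le T$ because $Y'_T=\xi'\ge\xi=Y_T$. Assumption (A) together with Lemma \ref{lemmaYcondexp} gives $Y_t\le\esssup_{\mu}\mathbb{E}^{\mu}_{{\cal F}_t}[Y_\tau-\int_t^\tau g(s,W,Y_s,\mu_s)ds]$, while Lemma \ref{lemmaYcondexp} applied to $(Y',Z',A')$ and $g'\le g$ gives $Y'_t\ge\esssup_{\mu}\mathbb{E}^{\mu}_{{\cal F}_t}[Y'_\tau-\int_t^\tau g(s,W,Y'_s,\mu_s)ds]$. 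On $[t,\tau)$ one has $Y'_s\le Y_s$, hence $g(s,W,Y'_s,\mu_s)\le g(s,W,Y_s,\mu_s)$ because $g$ is increasing in $y$, and $Y'_\tau\ge Y_\tau$; so each $\mu$-expectation on the first line is dominated by the corresponding one on the second, contradicting $Y'_t<Y_t$. You would need to either adopt this stopping-time argument or supply the missing BMO bound on $\beta$ and the measure-comparison estimate; as written the proposal does not constitute a proof.
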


\begin{proof}
We prove this proposition by contradiction. Set
$$
g'(t,w,y,\mu)=\sup_z \crl{\mu z-f'(t,w,y,z)}, \quad \mu \in \mathbb{R}^d.
$$
Since $f' \ge f$, one has $g' \le g$.
Assume that there exists $t \in[0,T]$ such that
$\p[Y'_t < Y_t] > 0$ and define $\tau := \inf\{s > t : Y'_s \ge Y_s\}$.
Since $Y'_T = \xi' \ge \xi =Y_T$, one has $t \le \tau \le T$.
By conditioning on $\crl{Y'_t < Y_t}$, one can assume that
$\p[Y'_t < Y_t] = 1$. Then
\beas
&& \esssup_{\mu\in {\rm BMO}} \mathbb{E}^{\mu}_{{\cal F}_t} \edg{Y'_{\tau}
- \int_t^{\tau} g(s,W,Y'_s,\mu_s)ds}\\
&\le& \esssup_{\mu\in {\rm BMO}} \mathbb{E}^{\mu}_{{\cal F}_t} \edg{Y'_{\tau}
- \int_t^{\tau} g'(s,W,Y'_s,\mu_s)ds} \le Y'_t\\
&<& Y_t \le \esssup_{\mu\in {\rm BMO}} \mathbb{E}^{\mu}_{{\cal F}_t} \edg{Y_{\tau}
- \int_t^{\tau} g(s,W,Y_s,\mu_s)ds}.
\eeas
However, since $f$ is decreasing in $y$, $g$ is increasing in $y$.
By the definition of $\tau$, one has $Y'_s \le Y_s$ for $t \le s < \tau$ and hence,
$$
\int_t^\tau g(s,W,Y'_s,\mu_s)ds \le \int_t^\tau g(s,W,Y_s,\mu_s)ds.
$$
On the other hand, $Y_{\tau} \le Y'_{\tau}$, and therefore,
$$
\mathbb{E}^{\mu}_{{\cal F}_t} \edg{Y'_\tau-\int_{t}^\tau g(s,W,Y'_s,\mu_s)ds \mid {\cal F}_t} \ge
\mathbb{E}^{\mu}_{{\cal F}_t} \edg{Y_\tau-\int_{t}^\tau g(s,W,Y_s,\mu_s)ds \mid {\cal F}_t}
$$
for all $\mu \in {\rm BMO}$, a contradiction.
\end{proof}

\setcounter{equation}{0}
\section{Proofs of Theorems \ref{thm2} and \ref{thm3}}

For $p \in [1,\infty]$, denote by $\mathcal{S}^p$ the space of all
$(\F_t)$-semimartingales $X$ such that
$$
||X||_{\mathcal{S}^p} := \N{\sup_{0 \le t \le T} |X_t|}_{L^p} < \infty.
$$
and by ${\cal H}^p$ the space of all special $(\F_t)$-semimartingales $X$ with
canonical decomposition $X = X_0 + U + V$ satisfying
$$
||X||_{\mathcal{H}^p} := \N{X_0}_{L^p} + \N{\edg{U,U}_T^{1/2}}_{L^p} +
\N{\int_0^T |dV_s|}_{L^p} < \infty.
$$

\begin{lemma} \label{lemmalipschitz}
Let $(Y^n,Z^n)$, $n = 1,2$, be solutions of the BSDE \eqref{bsde}
corresponding to bounded terminal conditions $\xi^n$ such that $Z^n$ are bounded and
$f$ satisfies {\rm (f1)--(f5)}. Then $Y^1$ and $Y^2$ are bounded and
$$
\N{Y^1 - Y^2}_{{\cal S}^{\infty}} \le \exp\{KT\}||\xi^1-\xi^2||_{L^{\infty}}.
$$
\end{lemma}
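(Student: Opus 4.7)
The plan is to exploit the implicit convex dual representation of Lemma \ref{lemmaYcondexp} and then close the loop via Gronwall.

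First, the boundedness of $Y^1$ and $Y^2$ is immediate from Lemma \ref{lemmaYbounded}, since $f$ satisfies (f2)--(f4) and the $Z^n$ are bounded. This also means the hypotheses for the equality statement in Lemma \ref{lemmaYcondexp} are met: $f$ is assumed to satisfy (f1), (f4), (f5), so for each $n = 1,2$ there is a bounded $\mathbb{R}^d$-valued predictable process $\mu^{n,*}$ with
$$
Y^n_t = \mathbb{E}^{\mu^{n,*}}_{\mathcal{F}_t}\!\left[\xi^n - \int_t^T g(s,W,Y^n_s,\mu^{n,*}_s)\,ds\right].
$$
Moreover, since $\mu^{n,*}$ is bounded it lies in ${\rm BMO}$, so the inequality \eqref{dualinequ} applies with this choice of driving process to the other solution.

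Next I would use the equality for $Y^1$ together with the inequality for $Y^2$ under the same measure $\mathbb{P}^{\mu^{1,*}}$ to get
$$
Y^1_t - Y^2_t \le \mathbb{E}^{\mu^{1,*}}_{\mathcal{F}_t}\!\left[(\xi^1 - \xi^2) + \int_t^T\!\big(g(s,W,Y^2_s,\mu^{1,*}_s) - g(s,W,Y^1_s,\mu^{1,*}_s)\big)\,ds\right].
$$
Because $g$ inherits (f3) from $f$, the integrand is bounded by $K|Y^1_s - Y^2_s|$, and the terminal term is bounded by $\|\xi^1 - \xi^2\|_{L^\infty}$. Since the conditional expectation under an equivalent measure of an essentially bounded random variable is bounded almost surely by its $L^\infty$-norm, this yields
$$
Y^1_t - Y^2_t \le \|\xi^1 - \xi^2\|_{L^\infty} + K\int_t^T \|Y^1_s - Y^2_s\|_{L^\infty}\,ds \quad \text{a.s.}
$$
Swapping the roles of $1$ and $2$ (using the equality for $Y^2$ and the inequality for $Y^1$ under $\mathbb{P}^{\mu^{2,*}}$) gives the symmetric bound, so
$$
\|Y^1_t - Y^2_t\|_{L^\infty} \le \|\xi^1 - \xi^2\|_{L^\infty} + K \int_t^T \|Y^1_s - Y^2_s\|_{L^\infty}\, ds.
$$
Gronwall's lemma (applied backward in time on $[0,T]$) then produces the pointwise-in-$t$ bound $\|Y^1_t - Y^2_t\|_{L^\infty} \le e^{K(T-t)} \|\xi^1 - \xi^2\|_{L^\infty}$.

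Finally, to upgrade from the pointwise bound to the $\mathcal{S}^\infty$-bound, I would invoke the path continuity of $Y^1$ and $Y^2$: both have $\int_0^{\cdot} Z^n_s\,dW_s$ continuous and an absolutely continuous drift, hence are continuous processes. Thus $\sup_{0 \le t \le T}|Y^1_t - Y^2_t| = \sup_{t \in \mathbb{Q} \cap [0,T]}|Y^1_t - Y^2_t|$, which is a countable supremum of random variables each bounded by $e^{KT}\|\xi^1 - \xi^2\|_{L^\infty}$ in $L^\infty$, delivering the stated $\mathcal{S}^\infty$ estimate. The only real subtlety is making sure the implicit dual representation can be applied with a common auxiliary process, which is why it is crucial that the optimal $\mu^{n,*}$ from Lemma \ref{lemmaYcondexp} is bounded (and thus BMO) so that it may also be used against the other solution in the inequality \eqref{dualinequ}.
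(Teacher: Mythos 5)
Your proposal is correct and follows essentially the same route as the paper: boundedness via Lemma \ref{lemmaYbounded}, the dual representation of Lemma \ref{lemmaYcondexp} with the bounded optimizers $\mu^{n,*}$ played against the inequality \eqref{dualinequ} for the other solution, the Lipschitz property of $g$ in $y$ inherited from (f3), and a backward Gronwall argument. The only cosmetic difference is that the paper packages the two one-sided estimates as a single bound involving $\sup_{\mu\in\{\mu^1,\mu^2\}}$, while you write them out separately and add an explicit (harmless) continuity argument to pass to the $\mathcal{S}^\infty$ norm.
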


\begin{proof}
By Lemma \ref{lemmaYbounded}, $Y^1$ and $Y^2$ are
bounded. So it follows from Lemma \ref{lemmaYcondexp} that
there exist bounded $\mathbb{R}^d$-valued $({\cal F}_t)$-predictable processes
$\mu^n$, $n=1,2$, such that
\beas
Y^n_t &=& \esssup_{\mu \in {\rm BMO}} \mathbb{E}^{\mu}_{{\cal F}_t}
\edg{\xi^n - \int_t^T g(s,W,Y^n_s,\mu_s)ds}\\
&=& \mathbb{E}^{\mu^n}_{{\cal F}_t} \edg{\xi^n - \int_t^T g(s,W,Y^n_s,\mu^n_s)ds},
\eeas
and one obtains as in the proof of Lemma \ref{lemmaZbound} that
\beas
|Y^1_t-Y^2_t| &\le& \sup_{\mu \in \{\mu^1,\mu^2\}} \mathbb{E}^{\mu}_{{\cal F}_t}
\edg{|\xi^1-\xi^2|+ \int_t^T |g(s,W,Y^1_s,\mu_s)- g(s,W,Y^2_s, \mu_s)|ds}\\
&\le& ||\xi^1 - \xi^2||_\infty + K \int_t^T ||Y^1_s -Y^2_s||_\infty ds.
\eeas
Now the lemma follows from Gronwall's lemma.
\end{proof}

We need the following result of Barlow and Protter (1990):

\begin{theorem} \label{thmBP} {\rm (Barlow and Protter, 1990)}\\
Let $(Y^n_t)_{0 \le t \le T}$, $n \in \mathbb{N}$, be a
sequence of semimartingales in $\mathcal{H}^1$ over
a filtered probability space with canonical decompositions
$Y^n= Y^n + U^n + V^n$ such that
\be \label{1a}
\sup_n \N{U^n}_{{\cal S}^1} \le K \quad \mbox{and} \quad
\sup_n \N{V^n}_{{\cal H}^1} \le  K \quad \mbox{for some } K \in \mathbb{R}_+
\ee
and $Y$ a RCLL process on the same probability space such that
$$
\lim_{n \to \infty} \N{Y^n - Y}_{{\cal S}^1} = 0.
$$
Then $Y$ is a semimartingale in $\mathcal{H}^1$ with canonical decomposition
$Y = Y_0 + U + V$ satisfying
$$
\N{U}_{{\cal S}^1} \le K, \quad \N{V}_{{\cal H}^1} \le K
$$
and
$$
\lim_{n \to \infty} \N{U^n - U}_{\mathcal{H}^1} = 0 \quad \mbox{and} \quad
\lim_{n \to \infty} \N{V^n - V}_{{\cal S}^1} = 0.
$$
\end{theorem}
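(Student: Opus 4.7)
The plan is to follow the Barlow-Protter strategy, which rests on two ingredients: uniqueness of the canonical (Doob-Meyer) decomposition of a special semimartingale, and a compactness argument for sequences of predictable finite variation processes with uniformly bounded total variation.

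First, I would extract a subsequential $\mathcal{S}^1$-limit of the predictable finite variation parts $V^n$. The bound $\sup_n \N{V^n}_{\mathcal{H}^1} \le K$ means that the signed measures $dV^n$ on $\Omega \times [0,T]$ are uniformly bounded in total variation. A Helly-type selection, combined with a stopping-time localization that exploits $\N{Y^n - Y}_{\mathcal{S}^1} \to 0$ to keep things uniformly bounded, produces a subsequence $V^{n_k} \to V$ in $\mathcal{S}^1$, with $V$ predictable and of finite variation and $\N{V}_{\mathcal{H}^1} \le K$ by Fatou. Setting $U := Y - Y_0 - V$, the $\mathcal{S}^1$-convergences of $Y^{n_k}$ and $V^{n_k}$ force $U^{n_k} \to U$ in $\mathcal{S}^1$; since the martingale property is preserved under $\mathcal{S}^1$-limits (via uniform integrability of $\sup_t |U^n_t|$), $U$ is a martingale with $\N{U}_{\mathcal{S}^1} \le K$.

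Second, uniqueness of the Doob-Meyer decomposition identifies $Y = Y_0 + U + V$ as \emph{the} canonical decomposition of $Y$ as a special semimartingale. Since $(U,V)$ is therefore uniquely determined by $Y$ independently of the chosen subsequence, the entire sequence (not just the subsequence) converges: $V^n \to V$ and $U^n \to U$ in $\mathcal{S}^1$.

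Finally, to upgrade $\mathcal{S}^1$-convergence of the martingale parts to $\mathcal{H}^1$-convergence, one applies the Davis ($p=1$ BDG) inequality to the martingale $U^n - U$, which vanishes at zero: for universal constants $c,C$,
$$
c \, \E{\edg{U^n - U,U^n - U}_T^{1/2}} \le \E{\sup_{0\le t\le T} |U^n_t - U_t|} \le C \, \E{\edg{U^n - U,U^n - U}_T^{1/2}},
$$
so $\N{U^n - U}_{\mathcal{H}^1} \le c^{-1} \N{U^n - U}_{\mathcal{S}^1} \to 0$. The main obstacle is the very first step: producing the $\mathcal{S}^1$-limit of the finite variation parts and verifying its predictability; this requires compactness machinery for bounded-variation processes on a filtered probability space (e.g., the Meyer-Zheng pseudopath topology) that goes beyond the direct BSDE manipulations used elsewhere in the paper.
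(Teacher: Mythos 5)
The paper does not actually prove this statement: Theorem \ref{thmBP} is quoted from Barlow and Protter (1990) and used as a black box, so there is no internal proof to compare against, and your proposal has to be judged on its own. It has a genuine gap at exactly the step you flag as ``the main obstacle,'' and the gap is not merely technical. The uniform bound $\sup_n \E{\int_0^T |dV^n_s|} \le K$ gives at best weak-$*$ (or Meyer--Zheng pseudopath) compactness of the finite-variation parts; it does not produce a subsequence converging in $\mathcal{S}^1$, i.e.\ uniformly in $t$ in $L^1(\Omega)$. Already deterministically, $V^n_t = \mathbf{1}_{\{t \ge t_n\}}$ with $(t_n)$ enumerating a dense subset of $[0,T]$ has total variation $1$ and admits no uniformly convergent subsequence. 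Consequently your scheme --- extract a subsequential limit, identify it via uniqueness of the canonical decomposition, deduce full-sequence convergence --- can only return convergence in whatever weak topology the compactness was obtained in, never the asserted $\lim_n \N{V^n - V}_{{\cal S}^1} = 0$. There are additional problems with a pathwise Helly selection (the subsequence depends on $\omega$, so adaptedness and predictability of the limit are not automatic), which Meyer--Zheng does not repair either, since it only yields convergence in measure in $t$.

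The actual Barlow--Protter argument is quantitative rather than compactness-based: the heart of their paper is an interpolation-type estimate for a special semimartingale $X = X_0 + M + A$ in $\mathcal{H}^1$, roughly $\N{A}_{{\cal S}^1} \le C \N{X}_{{\cal S}^1}^{1/2} \N{X}_{{\cal H}^1}^{1/2}$, proved by a discretization/integration-by-parts argument. Applied to $X = Y^n - Y^m$, whose canonical decomposition is $(U^n - U^m) + (V^n - V^m)$ and whose $\mathcal{H}^1$-norms are controlled by \eqref{1a} (using Davis's inequality to convert the $\mathcal{S}^1$-bound on $U^n$ into a bound on $\E{[U^n,U^n]_T^{1/2}}$), it shows directly that $(V^n)$ is Cauchy in $\mathcal{S}^1$. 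From there your remaining steps are essentially sound: $U^n = Y^n - Y^n_0 - V^n$ is then Cauchy in $\mathcal{S}^1$, its limit is a martingale because $U^n_t \to U_t$ in $L^1$ for each $t$, Davis's inequality upgrades the martingale convergence to $\mathcal{H}^1$, Fatou gives the norm bounds, and uniqueness of the canonical decomposition identifies the limit. In short, the proposal is missing precisely the one estimate that makes the theorem true; without it, the conclusion $\N{V^n - V}_{{\cal S}^1} \to 0$ is out of reach.
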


Now we are ready for the proof of Theorem \ref{thm2}:\\
{\bf Proof of Theorem \ref{thm2}.}
It follows from Theorem \ref{thm1} that $Y^n$ is bounded for all $n$ and from
Lemma \ref{lemmalipschitz} that
$$
\N{Y^m-Y^n}_{S^{\infty}} \le \exp\{KT\}||\xi^m-\xi^n||_{L^{\infty}}.
$$
Hence, $Y^n$ is a Cauchy sequence in $\mathcal{S}^\infty$. So there exists a continuous process
$Y \in {\cal S}^{\infty}$ such that $\N{Y^n-Y}_{{\cal S}^{\infty}} \to 0$ for $n \to \infty$.
It follows that $Y_T = \xi$.
To see that $Y$ is a supersolution of the BSDE \eqref{bsde}, note that
$Y^n$ is a continuous semimartingale with canonical decomposition
$Y^n = Y^n_0 + U^n + V^n$, where
$$
U^n_t = \int_0^t Z^n_s dW_s \quad \mbox{and} \quad
V^n_t = - \int_0^t f(s,W,Y_{n,s},Z^n_s) ds.
$$
Due to (f5) and the fact that the $Y^n$ are uniformly bounded it follows from
Lemma \ref{lemmasemi} and Remark \ref{Adecreasing} that
there exists a constant $C$ such that
$$
\E{\int_{\tau}^T |Z^n_s|^2 ds \mid {\cal F}_{\tau}} + \E{\int_0^T |f(s,W,Y^n_s,Z^n_s)| ds} \le C
$$
for all $n$ and every stopping time $\tau$.
In particular, $\sup_n \N{Z^n}_{\rm BMO} < \infty$ and $\sup_n \N{V^n}_{{\cal H}^1} < \infty$.
It follows that $\sup_n \N{U^n}_{{\cal H}^2} < \infty$, which implies that
$Y^n \in {\cal H}^1$ and $\sup_n \N{U^n}_{{\cal S}^1} < \infty$.
So the assumptions of Theorem \ref{thmBP} are satisfied,
and it follows that $Y$ is a semimartingale in ${\cal H}^1$
with canonical decomposition $Y_t = Y_0 + U_t + V_t$ such that
$U^n \to U$ in ${\cal H}^1$ and $V^n \to V$ in ${\cal S}^1$.
By the predictable representation property of $(W_t)$, there exists a $d$-dimensional
$({\cal F}_t)$-predictable process $Z$ such that $U_t = \int_0^t Z_s dW_s$ and
$$
\E{\sqrt{\int_0^T |Z^n_s-Z_s|^2 ds}} \to 0.
$$
By passing to a subsequence, one can assume that
\be \label{l2conv}
\int_0^T |Z^n_s- Z_s|^2 ds \to 0 \quad \mbox{almost surely.}
\ee
For every stopping time $\tau$ and $B \in {\cal F}_{\tau}$, one obtains from
Fatou's lemma that
$$
\E{1_B \int_{\tau}^T |Z_s|^2 ds} \le \liminf_n \E{1_B \int_{\tau}^T |Z^n_s|^2 ds},
$$
which shows that $Z$ belong to ${\rm BMO}$. It follows from
\eqref{l2conv} that for almost all $\omega$, one can pass to another
subsequence such that $Z^n_s(\omega) \to Z_s(\omega)$ for Lebesgue-almost all
$s \in [0,T]$. Hence, due to condition (f5), one can deduce from Fatou's lemma that
\beas
&& -V_t(\omega) + V_r(\omega) = \lim_n - V^n_t(\omega) + V^n_r(\omega)\\
&=& \lim_n \int_r^t f(s,W(\omega),Y^n_s(\omega),Z^n_s(\omega))ds
\ge \int_r^t f(s,W(\omega),Y_s(\omega),Z_s(\omega))ds
\eeas
for all $r < t$. So $A_t = - V_t - \int_0^t f(s,W,Y_s,Z_s)ds$ is a continuous increasing process
starting at $0$ such that
$$
Y_t = \xi + \int_t^T f(s,W,Y_s,Z_s) ds - \int_t^T Z_s dW_s + A_T - A_t.
$$
This shows that $(Y,Z,A)$ is a supersolution of the BSDE \eqref{bsde}
such that $Y$ is bounded and continuous.

Now assume that $f$ is increasing or decreasing in $y$. To see that
then $Y$ satisfies bounded comparison from above, note that one obtains from the
second part of Lemma \ref{lemmaYcondexp} that
$$
Y^n_t = \esssup_{\mu\in {\rm BMO}} \mathbb{E}^{\mu}_{{\cal F}_t} \edg{\xi^n-\int_t^T g(s,W,Y^n_s,\mu_s)ds}
= \mathbb{E}^{\mu^n}_{\F_t} \edg{\xi^n-\int_t^T g(s,W,Y^n_s,\mu^n_s)ds}
$$
for a sequence $\mu^n \in {\rm BMO}$.
We will show that this implies that $Y$ satisfies assumption (A).
For given $\varepsilon > 0$, choose $n \in \mathbb{N}$ so large that
$$
\N{Y^n - Y}_{S^\infty} \le \min \brak{\frac{\varepsilon}{3KT},\frac{\varepsilon}{3}}.
$$
Then
\beas
\frac{\varepsilon}{3} &\ge& \N{Y^n - Y}_{S^{\infty}}\\
&=& \N{\sup_t \Big|\mathbb{E}^{\mu^n}_{\F_t} \edg{\xi^n -\int_t^T
g(s,W,Y^n_s,\mu^n_s)ds} - Y_t\Big|}_{L^{\infty}}\\
&\ge& \N{\sup_t \Big|\mathbb{E}^{\mu^n}_{\F_t} \edg{\xi-\int_t^T g(s,W,Y^n_s,\mu^n_s)ds
} - Y_t\Big|}_{L^{\infty}} - \N{Y^n_T- Y_T}_{L^{\infty}} \\
&\ge&  \N{\sup_t \Big|\mathbb{E}^{\mu^n}_{\F_t} \edg{\xi-\int_t^T g(s,W,Y_s,\mu^n_s)ds
} - Y_t \Big|}_{L^{\infty}} -K \N{\int_0^T |Y^n_s-Y_s| ds }_{L^{\infty}} -\frac{\varepsilon}{3}\\
&\ge& \N{\sup_t \Big|\mathbb{E}^{\mu^n}_{\F_t} \edg{\xi-\int_t^T g(s,W,Y_s,\mu^n_s) ds
} - Y_t \Big|}_{L^{\infty}} -\frac{2 \varepsilon}{3}.
\eeas
In particular,
$$
Y_t \le \mathbb{E}^{\mu^n}_{{\cal F}_t} \edg{\xi-\int_t^T
g(s,W,Y_s,\mu^n_s)ds} + \varepsilon \quad \mbox{for all } t \in [0,T].
$$
This shows that $Y$ satisfies assumption (A). Now if
$(Y',Z')$ is a solution to the BSDE \eqref{bsde} with bounded terminal
condition $\xi' \ge \xi$ and driver $f' \ge f$ such that $Y'$ is bounded, then
$f'$ satisfies condition (f5). So it follows from Proposition \ref{increasing} or
Proposition \ref{decreasing} that $Y'_t \ge Y_t$ for all $t$.
\unskip\nobreak\hfill$\Box$\par\addvspace{\medskipamount}

\noindent
{\bf Proof of Theorem \ref{thm3}.} We know from Theorem
\ref{thm1} that $Y^n$ is an increasing sequence. By Lemma \ref{lemmalipschitz},
it is bounded in ${\cal S}^{\infty}$. So it
converges pointwise to a bounded predictable process $Y$. By Lemma \ref{lemmaYcondexp}, one has
for all $t$, \beas \nonumber Y_t&=& \sup_n Y^n_t
= \sup_n\esssup_{\mu\in\mathcal{B}} \Emustetig{\xi^n-\int_t^T g(s,W,Y^n_s,\mu_s)ds}\nonumber\\
&=& \esssup_{\mu \in\mathcal{B}}\sup_n \Emustetig{\xi^n-\int_t^T g(s,W,Y^n_s,\mu_s)ds}\nonumber\\
&=& \esssup_{\mu\in \mathcal{B}}\Emustetig{\xi-\int_t^T
g(s,W,Y_s,\mu_s)ds}\label{geq3}, \eeas where the last equality
follows from Beppo Levi's monotone converge theorem.
Now one deduces as in Proposition 2.1 and Theorem 2.1 of
Delbaen et al. (2009) that there exists a martingale of the form
$U_t = \int_0^t Z_s dW_s$ and a RCLL predictable process
$V_t \ge \int_0^t f(s,W,Y_s,Z_s) ds$ starting at $0$ such that
$$
Y_t = Y_0+ U_t + V_t.
$$
By Lemma \ref{lemmasemi}, $Z$ is in BMO and $\N{V}_{{\cal H}^1} < \infty$.
Defining $A_t := V_t - \int_0^t f(s,W,Y_s,Z_s)ds$ shows the existence of a supersolution.

To see that the supersolution satisfies bounded comparison from
above, assume $(Y',Z',A')$ is a supersolution of the BSDE
\eqref{bsde} with terminal condition $\xi' \ge \varphi(W)$ and
driver $f' \ge f$ such that $Y'$ is bounded. Then it follows
from Theorem \ref{thm2} that $Y'_t \ge Y^n_t$ for
all $t$ and $n$. Therefore, $Y'_t \ge Y_t$ for all $t$.
\unskip\nobreak\hfill$\Box$\par\addvspace{\medskipamount}

\begin{appendix}

\setcounter{equation}{0}

\section{Appendix: The validity of Theorem 12 of Briand et al. (2002) in our setting}

The purpose of this appendix is to show that Theorem 12 of Briand et al. (2002) still holds
in the context of the proof of Theorem \ref{thm1}. Most of their arguments go through in our setup.
But where they apply Proposition 11 we use Lemma \ref{lemmaB} below.
Assume that (W1)--(W5) hold and $f$ is a driver satisfying
$$
\sup_t |f(t,0,0,0)| < \infty
$$
and
\be \label{flip}
|f(t,w_1,y_1,z_1)-f(t,w_2,y_2,z_2)| \le K \brak{\sup_{0 \le s \le t} |w_1(s) - w_2(s)| +
|y_1-y_2| + |z_1 - z_2|}
\ee
for some constant $K \in \mathbb{R}_+$. As in Theorem \ref{thm1}, $\varphi : C^d[0,T] \to \mathbb{R}$ is
assumed to be a Lipschitz-continuous function. In particular, $\varphi(W)$ is square-integrable.
Under these assumptions it follows from Pardoux and Peng (1990) that the BSDE
\eqref{bsde} has a unique solution $(Y,Z)$, and we know from Proposition
\ref{propex} that for $N$ so large that $\max_i \Delta t^N_{i} < 1/K$,
the $N$-th BS$\Delta$E has a unique solution $(Y^N,Z^N,M^N)$. We are showing
the following version of Theorem 12 of Briand et al. (2002):

\begin{theorem}
For $N \to \infty$, one has
$$
\sup_t\Big(|Y^{N}_t- Y_t|+ |\int_0^t Z^N_s dW^N_s-\int_0^t Z_s dW_s|+|M^{N}_t|\Big)
\to 0 \quad \mbox{in } L^2,
$$
and
$$
\sup_t \brak{\sum_{k=1}^d \abs{\int_0^t Z^{N,k}_s d \ang{W^N}_s - \int_0^t Z^k_s ds}^2
+ \abs{\int_0^t |Z^N_s|^2 d \ang{W^N}_s - \int_0^t |Z_s|^2 ds}}
\to 0 \quad \mbox{in } L^1.
$$
\end{theorem}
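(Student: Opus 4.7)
The plan is to adapt the proof of Theorem 12 of Briand, Delyon and Mémin (2002) to our multi-dimensional setting with the time-averaged driver $\hat f^N$. The strategy has three stages: uniform a priori estimates on the discrete solutions, a stability estimate for the difference $Y^N - Y$, and identification of the limit. Since $\varphi$ is Lipschitz and $f$ is globally Lipschitz in $(w,y,z)$ in this appendix setting, there are no nonlinear growth issues to confront, and the argument is a quantitative stability computation on a shrinking time grid.

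For the uniform a priori estimates, I would apply the discrete It\^o formula to $|Y^N|^2$ on the grid $\{t^N_i\}$, use Lipschitz continuity of $\hat f^N$ together with the elementary inequality $2ab \le \varepsilon a^2 + b^2/\varepsilon$ to absorb cross-terms, and invoke Lemma \ref{lemmaGronwall} to obtain a constant $C$ (independent of $N$) with
$$
\sup_N \E{\sup_t |Y^N_t|^2 + \int_0^T |Z^N_s|^2 d\ang{W^N}_s + [M^N,M^N]_T} \le C.
$$
The terminal condition $\varphi(\hat W^N)$ is uniformly $L^2$-bounded because $\varphi$ is Lipschitz and $\sup_N \E{\N{\hat W^N}_\infty^2}<\infty$ by (W2).

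For the stability estimate I would set $\delta Y^N_t = Y_t - Y^N_t$; at the grid points this satisfies a BS$\Delta$E whose terminal error is $\varphi(W)-\varphi(\hat W^N)$ (controlled by $L\N{W-\hat W^N}_\infty \to 0$ in $L^2$ via (W2)) and whose driver error is $f(s,W,Y_s,Z_s)-\hat f^N(s,\hat W^N,Y^N_{s-},Z^N_s)$. Applying the discrete It\^o formula to $|\delta Y^N|^2$ as in the proof of Proposition 11 of Briand et al., splitting the driver error into (i) Lipschitz contributions in $(w,y,z)$ which are absorbed by Gronwall, and (ii) the intrinsic time-averaging error $\int_0^T |\hat f^N(s,W,Y_s,Z_s)-f(s,W,Y_s,Z_s)|^2 ds$, yields after the discrete Gronwall inequality the stated $L^2$-convergences of $Y^N$ and of $\int_0^\cdot Z^N_s dW^N_s$ to $\int_0^\cdot Z_s dW_s$, together with the vanishing of $M^N$ and the $L^1$ convergence of the bracket integrals.

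The main obstacle is controlling the intrinsic time-averaging error. Because $\hat f^N(s,w,y,z)$ is the piecewise time-average of $f(\cdot,w,y,z)$ over the adjacent interval, the convergence
$$
\int_0^T |\hat f^N(s,W,Y_s,Z_s) - f(s,W,Y_s,Z_s)|^2 ds \to 0 \quad \text{in } L^1
$$
is a Lebesgue-differentiation statement for predictable processes evaluated along the solution $(Y,Z)$; it is precisely the place where Proposition 11 of Briand et al. (which is stated for RCLL drivers in one dimension) does not apply directly. The appendix's Lemma B must supply a density / Lebesgue-point argument guaranteeing that for every predictable process $(H_s) \in L^2([0,T]\times\Omega)$ the averages $\Delta t^N_i{}^{-1}\int_{t^N_{i-1}}^{t^N_i}H_s\,ds$ converge to $H$ in $L^2(d\ang{W}\otimes d\p)$, so that the above error vanishes by dominated convergence combined with the Lipschitz continuity of $f$ in $(w,y,z)$ and the uniform a priori bounds. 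Once this step is granted, the remaining manipulations—convergence of the stochastic integrals against $W^N$ to those against $W$, the vanishing of the orthogonal martingale $M^N$ as $\max_i\Delta t^N_i\to 0$, and the convergence of $\int Z^N d\ang{W^N}$ to $\int Z\,ds$—follow from standard computations for Donsker-type theorems for BSDEs.
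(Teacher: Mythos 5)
There is a genuine gap in your step 2, and it is precisely the obstruction that forces the paper (following Briand, Delyon and M\'emin (2002)) to take a different route. The difference $\delta Y^N_t = Y_t - Y^N_t$ does \emph{not} satisfy a BS$\Delta$E: $Y$ and $Y^N$ live on different filtrations $({\cal F}_t)$ and $({\cal F}^N_t)$ and their martingale parts are integrals against different martingales, $\int Z_s\,dW_s$ versus $\int Z^N_s\,dW^N_s + M^N$. The discrete It\^o/Gronwall stability computation you invoke relies on the difference of the martingale parts being a single stochastic integral $\int(Z^1_s-Z^2_s)\,dM_s$ against a common integrator, whose predictable bracket produces the term $\int |Z^1_s-Z^2_s|^2\,d\ang{M}_s$ that is then absorbed; here the mixed bracket between $\int Z\,dW$ and $\int Z^N\,dW^N + M^N$ does not reduce to anything of this form, so the contraction estimate never closes and you cannot extract convergence of $Z^N$ to $Z$ (nor the vanishing of $M^N$) this way. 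Proposition 11 of Briand et al.\ is an estimate \emph{within} the discrete world, not a bridge between the discrete and continuous equations. The paper's actual proof avoids the direct comparison entirely: it runs Picard iterations $(Y^{\infty,p},Z^{\infty,p})$ and $(Y^{N,p},Z^{N,p},M^{N,p})$ in parallel, shows the iterates converge to the respective solutions uniformly in $N$ (Pardoux--Peng on the continuous side, Corollary 10 of Briand et al.\ on the discrete side), and then for each fixed $p$ proves $N\to\infty$ convergence by induction, using Lemma \ref{lemmaB} for the driver integral and Theorem 5 of Briand et al.\ for the convergence of the martingale representations. At each Picard step one is comparing conditional expectations of \emph{explicit, known} functionals, which is where convergence of filtrations can be exploited; this is the missing idea in your proposal.

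A secondary inaccuracy: the ``intrinsic time-averaging error'' you worry about is not where the difficulty lies. By the construction \eqref{deffn}, $\int_{(t^N_i,t^N_{i+1}]}\hat f^N(s,w,y,z)\,d\ang{W^N}_s = \int_{t^N_i}^{t^N_{i+1}} f(s,w,y,z)\,ds$ exactly, so the averaging error is confined to the partial final subinterval and is trivially $O(\max_i \Delta t^N_i)$ under the Lipschitz and boundedness assumptions; no Lebesgue-differentiation argument is needed. The real content of Lemma \ref{lemmaB} is the continuity of $\int_0^t f(s,\cdot,\cdot,\cdot)\,ds$ along the converging inputs $(\hat W^N, Y^{N,p}, Z^{N,p}) \to (W, Y^{\infty,p}, Z^{\infty,p})$, where the strong $L^2$ convergence of $Z^{N,p}$ is deduced from weak convergence plus convergence of norms.
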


\begin{proof}
Set
$$
(Y^{\infty,0},Z^{\infty,0}) := (0,0) \quad \mbox{and} \quad
(Y^{N,0},Z^{N,0},M^{N,0}) :=(0,0,0).
$$
For $p \in \mathbb{N}$, define $(Y^{\infty,p+1},Z^{\infty,p+1})$ as follows:
$Z^{\infty,p+1}$ is the unique $d$-dimensional $({\cal F}_t)$-predictable process satisfying
\beas
\int_0^t Z^{\infty,p+1}_s dW_s &=& \mathbb{E}_{{\cal F}_t} \edg{
\varphi(W) + \int_0^T f(s,W,Y^{\infty,p}_s,Z^{\infty,p}_s)ds}\\
&& - \mathbb{E} \edg{
\varphi(W) + \int_0^T f(s,W,Y^{\infty,p}_s,Z^{\infty,p}_s)ds}
\eeas
and
$$
Y^{\infty,p+1}_t = \varphi(W) + \int_t^T
f(s,W,Y^{\infty,p}_s,Z^{\infty,p}_s)ds -\int_t^T Z^{\infty,p+1}_s dW_s.
$$
Similarly, decompose
\beas
&& \mathbb{E}_{{\cal F}^N_t} \edg{\varphi(\hat{W}^N) + \int_{(0,T]}
\hat{f}^N(s,\hat{W}^N,Y^{N,p}_{s-}, Z^{N,p}_s) d \ang{W^N}_s}\\
&-& \mathbb{E} \edg{\varphi(\hat{W}^N) + \int_{(0,T]}
\hat{f}^N(s,\hat{W}^N,Y^{N,p}_{s-}, Z^{N,p}_s) d \ang{W^N}_s}
\eeas
into a martingale of the form $\int_{(0,t]} Z^{N,p+1}_s d W^N_s$ and
a martingale $M^{N,p+1}$ orthogonal to $W^N$. Then set
\beas
Y^{N,p+1}_t &=& \varphi(\hat{W}^N) + \int_{(t,T]} \hat{f}^N(s,\hat{W}^N,Y^{N,p}_{s-}, Z^{N,p}_s)
d \ang{W^N}_s\\
&& - \int_{(t,T]} Z^{N,p+1}_s d W^N_s - (M^{N,p+1}_T - M^{N,p+1}_t).
\eeas
It is well-known from Pardoux and Peng (1990) that
$$
\E{\sup_t |Y^{\infty,p}_t - Y_t|^2 + \int_0^T |Z^{\infty,p}_s - Z_s|^2 ds}
\to 0 \quad \mbox{for } p \to \infty,
$$
and it follows as in Corollary 10 of Briand et al. (2002) that there exists
an $N_0$ such that
$$
\sup_{N \ge N_0}
\E{\sup_t |Y^{N,p}_t - Y^N_t|^2 + \int_0^T |Z^{N,p}_s - Z^N_s|^2 d\ang{W^N}_s +
|M^{N,p}_T - M^N_T|^2} \to 0 \quad \mbox{for } p \to \infty.
$$
So it is enough to show that for fixed $p$ and $N \to \infty$, one has
\be
\label{1toshow}
\sup_t \brak{|Y^{N,p}_t-Y^{\infty,p}_t| + |\int_0^t Z^{N,p}_s dW^N_s -
\int_0^t Z^{\infty,p}_s dW_s|+ |M^{N,p}_t|} \to 0 \quad
\mbox{in } L^2
\ee
and
\be \label{2toshow} \sup_t
\brak{\sum_{k=1}^d \abs{\int_0^t Z^{N,p,k}_s d \ang{W^N}_s -
\int_0^t Z^{\infty,p,k}_s ds}^2 + \abs{\int_0^t |Z^{N,p}_s|^2 d
\ang{W^N}_s - \int_0^t |Z^{\infty,p}_s|^2 ds}} \to 0
\ee
in $L^1$.
This can be proven by induction over $p$. Assume that it holds
for $p$. Then by Lemma \ref{lemmaB} below,
$$
\sup_t \Big|\int_{(0,t]}
\hat{f}^N(s,\hat{W}^N,Y^{N,p}_{s-},Z^{N,p}_s) d\ang{W^N}_s-
\int_0^t f(s,W,Y^{\infty,p}_{s},Z^{\infty,p}_s) ds \Big| \to 0
\quad \mbox{in } L^2.
$$
Moreover, one obtains as in Briand et al. (2002) that
$$
\mathbb{E}_{{\cal F}^N_t} \edg{\varphi(\hat{W}^N)}=
Y^{N,p+1}_t- \mathbb{E}_{{\cal F}^N_t} \edg{\int_{(t,T]}
\hat{f}^N(s,\hat{W}^N,Y^{N,p}_{s-}, Z^{N,p}_s) d \ang{W^N}_s}
$$
converges in ${\cal S}^2$ to
$$
\mathbb{E}_{{\cal F}_t} \edg{\varphi(\hat{W})}=Y^{\infty,p+1}_t-
\mathbb{E}_{{\cal F}_t} \edg{\int_{t}^T
f(s,W,Y^{\infty,p}_s, Z^{\infty,p}_s) d s}.
$$
So $Y^{N,p+1} \to Y^{\infty,p+1}$ in ${\cal S}^2$. Finally, since the martingale
$$\mathbb{E}_{{\cal F}^N_t} \edg{Y^{N,p+1}_T-Y^{N,p+1}_0+ \int_{(0,T]}
\hat{f}^N(s,W^N,Y^{N,p}_{s-}, Z^{N,p}_s) d \ang{W^N}_s}
=\int_{(0,t]} Z^{N,p+1}_s d W^N_s +M^{N,p+1}_t$$
converges in ${\cal S}^2$ to
$$\mathbb{E}_{{\cal F}_t} \edg{Y^{\infty,p+1}_T-Y^{\infty,p+1}_0+ \int_{0}^T
f(s,W,Y^{\infty,p}_s, Z^{\infty,p}_s) d s }=\int_{0}^t Z^{p+1}_s d W_s,$$
\eqref{1toshow}--\eqref{2toshow} follow from Theorem 5 in Briand et al. (2002).
\end{proof}

\begin{lemma} \label{lemmaB}
Fix $p \in \mathbb{N}$ and assume that
\beas
&& \sup_t |Y^{N,p}_t-Y^{\infty,p}_t|^2+\sum_{k=1}^d\abs{\int_{(0,t]} Z^{N,p,k}_s
d \ang{W^N}_s - \int_0^t Z^{\infty,p,k}_s ds}^2\\
&& + \abs{\int_{(0,t]} |Z^{N,p}_s|^2 d \ang{W^N}_s - \int_0^t |Z^{\infty,p}_s|^2 ds}
\to 0 \quad \mbox{in } L^1 \quad \mbox{for } N \to \infty.
\eeas
Then
$$
\sup_t \Big|\int_{(0,t]} \hat{f}^N(s,\hat{W}^N,Y^{N,p}_{s-},Z^{N,p}_s)
d\ang{W^N}_s- \int_0^t f(s,W,Y^{\infty,p}_{s},Z^{\infty,p}_s) ds \Big| \to 0
\quad \mbox{in } L^2
$$
for $N \to \infty$.
\end{lemma}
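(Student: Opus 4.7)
\textbf{Proof plan for Lemma \ref{lemmaB}.} I would decompose
\[
D^N(t):=\int_{(0,t]}\hat{f}^N(s,\hat{W}^N,Y^{N,p}_{s-},Z^{N,p}_s)\,d\ang{W^N}_s-\int_0^t f(s,W,Y^{\infty,p}_{s},Z^{\infty,p}_s)\,ds
\]
as $D^N=D^N_1+D^N_2+D^N_3$, where $D^N_1$ is the error from replacing $d\ang{W^N}_s$ by $ds$, $D^N_2$ the error from replacing $\hat{f}^N$ by $f$ while keeping the arguments $(\hat{W}^N,Y^{N,p}_{s-},Z^{N,p}_s)$, and $D^N_3$ the Lipschitz replacement of $(\hat{W}^N,Y^{N,p}_{s-},Z^{N,p}_s)$ by $(W,Y^{\infty,p}_s,Z^{\infty,p}_s)$ inside $f$.

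The key observation is that $D^N_1$ and $D^N_2$ vanish at every grid point $t=t^N_k$. Indeed, $Y^{N,p}_{s-}$ is constant on $[t^N_{i-1},t^N_i)$, $Z^{N,p}_s$ is constant on $(t^N_{i-1},t^N_i]$, $\hat{f}^N(\cdot,w,y,z)$ is constant on $(t^N_{i-1},t^N_i]$, and by definition $\hat{f}^N(t^N_i,w,y,z)\,\Delta t^N_i=\int_{t^N_{i-1}}^{t^N_i}f(r,w,y,z)\,dr$, so all three integrals reduce at grid points to the same Riemann sum. For $t\in(t^N_k,t^N_{k+1})$ each error is bounded by $2\,\Delta t^N_{k+1}\,|\hat{f}^N(t^N_{k+1},\hat{W}^N,Y^{N,p}_{t^N_k},Z^{N,p}_{t^N_{k+1}})|$. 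Using \eqref{flip} and the elementary estimate
\[
|Z^{N,p}_{t^N_{k+1}}|\,\Delta t^N_{k+1}\le\sqrt{\max_i\Delta t^N_i}\,\brak{\int_0^T|Z^{N,p}_s|^2\,ds}^{1/2}
\]
together with the uniform $L^1$-bound $\sup_N\E{\int_0^T|Z^{N,p}_s|^2\,ds}<\infty$ (which follows from the hypothesis and the identity $\int_0^T|Z^{N,p}_s|^2\,ds=\int_{(0,T]}|Z^{N,p}_s|^2\,d\ang{W^N}_s$ coming from piecewise constancy), one gets $\E{\sup_t(|D^N_1(t)|+|D^N_2(t)|)^2}=O(\max_i\Delta t^N_i)\to 0$.

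For $D^N_3$, \eqref{flip} yields
\[
\sup_t|D^N_3(t)|\le K\int_0^T\brak{\N{\hat{W}^N-W}_\infty+|Y^{N,p}_{s-}-Y^{\infty,p}_s|+|Z^{N,p}_s-Z^{\infty,p}_s|}ds.
\]
The first summand tends to $0$ in $L^2$ by (W2) together with a straightforward estimate on the shift $h^N$ in the definition of $\hat{W}^N$; the second by the hypothesis on $Y^{N,p}$. For the $Z$-summand I apply Cauchy--Schwarz and reduce matters to showing $\int_0^T|Z^{N,p}_s-Z^{\infty,p}_s|^2\,ds\to 0$ in $L^1$. Expanding the square, the term $\int_0^T|Z^{N,p}_s|^2\,ds$ converges to $\int_0^T|Z^{\infty,p}_s|^2\,ds$ in $L^1$ by the grid identity and the hypothesis. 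For the cross term I use that the same identity together with a boundary estimate of the $Z$-type above gives $\sup_t|\int_0^tZ^{N,p,k}_s\,ds-\int_0^t Z^{\infty,p,k}_s\,ds|\to 0$ in $L^2$; by linearity this gives $\int_0^T h_s\cdot Z^{N,p}_s\,ds\to \int_0^T h_s\cdot Z^{\infty,p}_s\,ds$ in $L^1$ for every simple $h$, and by density in $L^2(d\p\otimes ds)$ this extends to weak convergence $Z^{N,p}\to Z^{\infty,p}$ in $L^2(d\p\otimes ds)$. Combined with the norm convergence above, the standard Hilbert-space fact (weak + norm $\Rightarrow$ strong) yields $\E{\int_0^T|Z^{N,p}_s-Z^{\infty,p}_s|^2\,ds}\to 0$, as required.

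The principal obstacle is this final weak-to-strong $L^2$-argument for the $Z^{N,p}$, because the hypothesis provides only integrated convergence (antiderivatives and $L^2$-norms) rather than pathwise $L^2$ convergence of $Z^{N,p}$ itself. Everything else is a bookkeeping exercise exploiting the piecewise constancy of the discretized processes and the Lipschitz structure of $f$.
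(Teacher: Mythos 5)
Your proposal is correct and follows essentially the same route as the paper's proof: the observation that $\int_{(t^N_i,t^N_{i+1}]}\hat f^N\,d\ang{W^N}_s$ coincides with $\int_{t^N_i}^{t^N_{i+1}}f(s,\hat W^N,Y^{N,p}_s,Z^{N,p}_s)\,ds$ so the error vanishes at grid points, a boundary estimate of order $\max_i\Delta t^N_i$ controlled via Cauchy--Schwarz and the $L^1$-bound on $\int|Z^{N,p}|^2\,d\ang{W^N}$, the Lipschitz replacement of the arguments, and strong $L^2(d\p\otimes ds)$ convergence of $Z^{N,p}$ deduced from weak convergence (through the antiderivatives) together with convergence of norms. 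The only cosmetic difference is that the paper merges your $D^N_1$ and $D^N_2$ into a single step via the defining identity for $\hat f^N$.
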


\begin{proof}
By definition \eqref{deffn}, one has
\beas
&&\int_{(t^N_i,t^N_{i+1}]} \hat{f}^N(s,\hat{W}^N,Y^{N,p}_{s-},Z^{N,p}_s)  d\ang{W^N}_s
=\int_{t^N_i}^{t^N_{i+1}} f(s,\hat{W}^N,Y^{N,p}_{t^N_{i}},Z^{N,p}_{t^N_{i+1}})  ds\\
&& = \int_{t^N_i}^{t^N_{i+1}} f(s,\hat{W}^N,Y^{N,p}_s,Z^{N,p}_s) ds,
\eeas
and therefore,
\bea
\label{i}
&& \sup_t \Big|\int_{(0,t]} \hat{f}^N(s,\hat{W}^N,Y^{N,p}_s,Z^{N,p}_s)
d\ang{W^N}_s - \int_{(0,t]} f(s,\hat{W}^N,Y^{N,p}_s,Z^{N,p}_s) ds \Big|^2\\
&=&
\max_i \sup_{t^N_i<t\leq t^N_{i+1}} \Big|\int_{t^N_i}^{t} f(s,\hat{W}^N,Y^{N,p}_s,Z^{N,p}_s)ds \Big|^2
\nonumber\\
&\le&
\max_i \Delta t^N_{i+1} \int_{t^N_i}^{t^N_{i+1}} |f(s,\hat{W}^N,Y^{N,p}_s,Z^{N,p}_s)|^2 ds
\nonumber\\
&\le& \nonumber
4 \max_i (\Delta t^N_{i+1})^2
\crl{\sup_t|f(t,0,0,0)|^2 + K^2 \brak{
\sup_t |W_t|^2 + |Y^{N,p}_{t^N_{i}}|^2
+|Z^{N,p}_{t^N_{i+1}}|^2}} \to 0,
\eea
in $L^1$ for $N \to \infty$, where we used \eqref{flip}
and $(a+b+c+d)^2 \le 4 (a^2+b^2+c^2+d^2)$.
Next, observe that it follows from the assumptions that for all $k=1, \dots, d$,
$$
Z^{N,p,k} \to Z^{\infty,p,k} \quad \mbox{weakly in } L^2([0,T] \times \Omega)
$$
as well as
$$
\N{Z^{N,p,k}}_{L^2([0,T] \times \Omega)} \to \N{Z^{\infty,p,k}}_{L^2([0,T] \times \Omega)}.
$$
This gives
$$
\int_0^T \big|Z^{N,p}_s - Z^{\infty,p}_s\big|^2 ds \to 0 \quad \mbox{in } L^1
\quad \mbox{as } N \to \infty,
$$
which, together with $(a+b+c)^2 \le 3 (a^2 + b^2 + c^2)$, shows that
\bea
\label{prob2}
&& \sup_t \Big| \int_0^t f(s,\hat{W}^N,Y^{N,p}_s,Z^{N,p}_s) ds
- \int_0^t f(s,W,Y^{\infty,p}_{s},Z^{\infty,p}_s)  ds\Big|^2\nonumber\\
&\le& T\int_0^T \Big|f(s,\hat{W}^N,Y^{N,p}_s,Z^{N,p}_s)
-  f(s,W,Y^{\infty,p}_s,Z^{\infty,p}_s)\Big|^2  ds \nonumber\\
&\le& 3 T^2 K^2 \sup_t \brak{|\hat{W}^N_t-W_t|^2+ |Y^{N,p}_t-Y^{\infty,p}_t|^2}
+ 3 T K^2 \int_0^T|Z^{N,p}_s-Z^{\infty,p}_{s}|^2ds \Big) \nonumber\\
&& \to 0 \quad \mbox{in } L^1 \quad \mbox{for } N \to \infty.
\eea
Combining \eqref{i} and \eqref{prob2}, one obtains
\beas
&& \sup_t \Big|\int_{(0,t]} \hat{f}^N(s,\hat{W}^N,Y^{N,p}_{s-},Z^{N,p}_s)  d\ang{W^N}_s
- \int_0^t f(s,W,Y^{\infty,p}_{s},Z^{\infty,p}_s)  ds\Big|^2\\
&\le& 2\sup_t\bigg\{\Big|\int_{(0,t]} \hat{f}^N(s,\hat{W}^N,Y^{N,p}_{s-},Z^{N,p}_s)
d\ang{W^N}_s-\int_0^t f(s,\hat{W}^N,Y^{N,p}_s,Z^{N,p}_s) ds \Big|^2\\
&& +\Big|\int_0^t f(s,\hat{W}^N,Y^{N,p}_s,Z^{N,p}_s) ds
- \int_0^t f(s,W,Y^{\infty,p}_{s},Z^{\infty,p}_s) ds \Big|^2\bigg\}
\to 0 \quad \mbox{in } L^1 \quad \mbox{as } N \to \infty.
\eeas
\end{proof}

\end{appendix}

\bigskip \bigskip \bigskip
\noindent
{\bf \Large References}\\[4mm]
M.T. Barlow and P. Protter (1990). On convergence of semimartingales.
S\'eminaire de Probabilit\'es. XXIV, Lecture Notes in Math., 1426,  Springer, Berlin.\\[2mm]
P. Barrieu and N. El Karoui (2009).
Pricing, hedging and optimally designing derivatives via minimization of risk measures.
Indifference Pricing: Theory and Applications
(ed: Ren\'e Carmona), Princeton University Press.\\[2mm]
M. Ben-Artzi, P. Souplet and F. B. Weissler (2002).
The local theory for viscous Hamilton-Jacobi equations in
Lebesgue spaces. J. Math. Pures Appl. 81, 343--378.\\[2mm]
J. M. Bismut (1973). Th\'eorie probabiliste du contr\^ole des diffusions.
Mem. Amer. Math. Soc. 176.\\[2mm]
P. Briand, B. Delyon and J. M\'emin (2001).
Donsker-type theorem for BSDEs. Electron. Commun. Probab.6, 1--14.\\[2mm]
P. Briand, B. Delyon and J. M\'emin (2002). On the
robustness of backward stochastic differential equations.
Stoch. Proc. Appl. 97, 229--253.\\[2mm]
P. Briand and Y. Hu (2006). BSDEs with quadratic growth and
unbounded terminal value. Probab. Th. Rel. Fields 136, 604--618.\\[2mm]
P. Briand and Y. Hu (2008). Quadratic BSDEs with convex
generators and unbounded terminal conditions. Probab. Th. Rel. Fields 141, 543--567.\\[2mm]
P. Cheridito and M. Stadje (2009). BS$\Delta$Es and BSDEs with
non--Lipschitz drivers: properties, robustness and limit results. Preprint.\\[2mm]
F. Delbaen, Y. Hu and X. Bao (2009). Backward SDEs
with superquadratic growth. Probab. Th. Rel. Fields. forthcoming.\\[2mm]
F. Delbaen, Y. Hu and A. Richou (2011). On the uniqueness of solutions to quadratic BSDEs with convex
generators and unbounded terminal conditions.
Ann. Inst. Henri Poincar\'{e}, Prob. et Stat., 47(2), 559–-574.\\[2mm]
N. El--Karoui, S. Peng and M.C. Quenez (1997). Backward
stochastic differential equations in finance. Math. Finance 7, 1--71.\\[2mm]
B. H. Gilding, M. Guedda and R. Kersner (2003). The Cauchy
problem for $u_t = \Delta u + |\nabla u|^q$. J. Math. Anal. Appl. 284, 733--755.\\[2mm]
N. Kazamaki (1994). Continuous exponential martingales and BMO. 
Lecture Notes in Math. 1579, Springer.\\[2mm]
M. Kobylanski (2000). Backward stochastic differential
equations and partial differential equations with quadratic
growth. Ann. Probab. 28, 259--276.\\[2mm]
E. Pardoux and S. Peng (1990). Adapted solution of a backward
stochastic differential equation. Syst. Control Lett. 14, 55--61.\\[2mm]
S. Peng (1999). Monotonic limit theorem of BSDE and nonlinear
decomposition theorem of Doob-Meyers type. Prob. Rel. Fields, 113, 23--30.
\end{document}